\title[Injectivity theorem for pseudo-effective 
line bundles]{Injectivity theorem  
for pseudo-effective line bundles  and its applications}
\date{\today, version 0.055}
\subjclass[2010]{Primary 32L10; Secondary 32Q15.}
\keywords{injectivity theorems, vanishing theorems, 
pseudo-effective line bundles, singular Hermitian metrics, 
multiplier ideal sheaves}
\dedicatory{Dedicated to Professor Ichiro Enoki on the occasion of his retirement}
\author{Osamu Fujino}
\address{Department of Mathematics, Graduate School of Science, 
Kyoto University, Kyoto 606-8502, Japan}
\email{fujino@math.kyoto-u.ac.jp}
\author{Shin-ichi Matsumura}
\address{Mathematical Institute, Tohoku University, 
6-3, Aramaki Aza-Aoba, Aoba-ku, Sendai 980-8578, Japan.}
\email{mshinichi-math@tohoku.ac.jp, mshinichi0@gmail.com}
\newcommand{\Pic}[0]{\operatorname{Pic}}
\newcommand{\Coker}[0]{\operatorname{Coker}}
\newcommand{\Ker}[0]{\operatorname{Ker}}
\newcommand{\Image}[0]{\operatorname{Im}}
\newcommand{\deldel}{\sqrt{-1}\partial \overline{\partial}}
\newcommand{\dbar}{\overline{\partial}}
\newcommand{\e}{\varepsilon}
\newcommand{\ome}{\widetilde{\omega}}
\newcommand{\I}[1]{\mathcal{J}(#1)}
\newcommand{\lla}[0]{{\langle\!\hspace{0.02cm} \!\langle}}
\newcommand{\rra}[0]{{\rangle\!\hspace{0.02cm}\!\rangle}}
\newtheorem{thm}{Theorem}[section]
\newtheorem{lem}[thm]{Lemma}
\newtheorem{cor}[thm]{Corollary}
\newtheorem{prop}[thm]{Proposition}
\newtheorem{prob}[thm]{Problem}
\newtheorem*{claim}{Claim}
\newtheorem{theorema}{Theorem}
\theoremstyle{definition}
\newtheorem{defn}[thm]{Definition}
\newtheorem{rem}[thm]{Remark}
\newtheorem*{ack}{Acknowledgments}
\newtheorem{step}{Step}
\newtheorem{ex}[thm]{Example}
\begin{document}
\bibliographystyle{amsalpha+}

\maketitle

\begin{abstract}
We formulate and establish a generalization of Koll\'ar's injectivity 
theorem for adjoint bundles twisted by suitable multiplier ideal sheaves. 
As applications, we generalize 
Koll\'ar's torsion-freeness, 
Koll\'ar's vanishing theorem, 
and a generic vanishing theorem  for pseudo-effective line bundles. 
Our approach is not Hodge theoretic but analytic, 
which enables us to treat singular Hermitian metrics with 
nonalgebraic singularities. 
For the proof of the main injectivity theorem, 
we use $L^{2}$-harmonic forms on noncompact K\"ahler manifolds. 
For applications, 
we prove a Bertini-type theorem on the restriction of multiplier 
ideal sheaves to general members of free linear systems. 
\end{abstract}

\tableofcontents
\section{Introduction}\label{f-sec1}

The Kodaira vanishing theorem \cite{kodaira} 
is one of the most celebrated results in complex geometry, 
and it has been generalized to several significant results; 
for example, the Kawamata--Viehweg vanishing theorem, 
the Nadel vanishing theorem, 
Koll\'ar's injectivity theorem 
(see \cite[Chapter 3]{fujino-foundation}). 
Kodaira's original proof is based on the theory of harmonic (differential) forms, 
and has currently been developed to two approaches from different 
perspectives:~One is the Hodge theoretic approach, which is 
algebro-geometric theory based on Hodge structures and spectral 
sequences. The other is the transcendental approach, 
which is an analytic theory focusing on 
harmonic forms and $L^2$-methods for $\dbar$-equations. 
These approaches have been nourishing each other in the last decades. 

As is well known, the Kawamata--Viehweg vanishing theorem 
plays a crucial role in the theory of minimal models for higher-dimensional 
complex algebraic varieties with only mild singularities. 
Now some generalizations of Koll\'ar's injectivity theorem allow us to extend 
the framework of the minimal model program to  highly singular varieties 
(see  \cite{Amb03}, \cite{Amb14}, 
\cite{esnault-viehweg}, \cite{fujino-pja}, \cite{fujino-quasi}, 
\cite{fujino-funda}, 
\cite{fujino-vanishing}, \cite{fujino-injectivity}, 
\cite{fujino-slc}, \cite{fujino-foundation}, \cite{fujino-kodaira}, 
\cite{fujino-vani-semi}, \cite{fujino-slc-surface}, 
\cite{fujino-kollar-type}). 
The reader can find various vanishing theorems and their applications 
in the minimal model program in \cite[Chapters 3 and 6]{fujino-foundation}. 
Koll\'ar's original injectivity theorem, which is one of 
the most important generalizations of the Kodaira vanishing 
theorem, was first established by using the Hodge theory 
(see \cite{kollar-higher1}). 
The following theorem, which is a special case of \cite[Theorem 3.16.2]{fujino-foundation}, 
is obtained from the theory of mixed Hodge structures on 
cohomology with compact support. 

\begin{thm}[Injectivity theorem for log canonical pairs]\label{f-thm1.1}
Let $D$ be a simple normal crossing divisor 
on a smooth projective variety $X$ 
and $F$ be a semiample line bundle on $X$. 
Let $s$ be a nonzero global section of a positive multiple $F^{\otimes m}$ 
such that the zero locus $s^{-1}(0)$ 
contains no log canonical centers of the log canonical 
pair $(X,D)$. 
Then the  map  
\begin{equation*}
\times s: 
H^{i}(X, K_{X} \otimes D \otimes F) 
\to 
H^{i}(X, K_{X} \otimes D \otimes F^{\otimes m+1})
\end{equation*}
induced by $\otimes s$ is injective for every $i$. 
Here $K_{X}$ denotes the canonical bundle of $X$. 
\end{thm}

The Hodge theoretic approach for Theorem \ref{f-thm1.1} is algebro-geometric. 
For the proof, 
we first take a suitable resolution of singularities 
and then take a cyclic cover. 
After that, we apply the $E_1$-degeneration of 
a Hodge to de Rham type spectral sequence coming from 
the theory of mixed Hodge structures on cohomology with 
compact support. 
In this proof, we do not directly use analytic arguments; 
on the contrary, we have no analytic proof for Theorem \ref{f-thm1.1}. 
This indicates that a precise relation between 
the Hodge theoretic approach and the transcendental method 
is not clear yet and is still mysterious.  
There is room for further research from the analytic viewpoint. 
In this paper, we pursue the transcendental approach 
for vanishing theorems instead of the Hodge theoretic approach.

A transcendental approach for Koll\'ar's important work (see \cite{kollar-higher1}) 
was first given by Enoki, 
which improves  Koll\'ar's original injectivity theorem 
to semipositive line bundles on compact K\"ahler manifolds 
as an easy application of the theory of harmonic forms. 
After Enoki's work,  
several authors obtained some generalizations of Koll\'ar's injectivity theorem 
from the analytic viewpoint,  based on the theory of $L^2$-harmonic forms 
(see, for example, \cite{enoki}, 
\cite{takegoshi}, \cite{ohsawa}, \cite{fujino-osaka}, 
\cite{fujino-crelle}, 
\cite{matsumura1}, \cite{matsumura2}, and \cite{matsumura4}). 
Based on the same philosophy, 
it is natural to expect  
Theorem \ref{f-thm1.1}  to hold in the complex analytic setting. 
However, as we mentioned above, 
there is no analytic proof for Theorem \ref{f-thm1.1}. 
Difficulties lie in that 
the usual $L^{2}$-method does not work for log canonical singularities, and 
that no transcendental methods are corresponding 
to the theory of mixed Hodge structures 
(see \cite{matsumura8, noguchi, liu-rao-wan} for some approaches). 
The transcendental method often provides some  powerful tools 
not only in complex geometry but also in algebraic geometry. 
Therefore it is  of interest to 
study various vanishing theorems and related topics 
by using the transcendental method. 

In this paper, by developing the transcendental approach for vanishing theorems, 
we  prove 
Koll\'ar's injectivity, 
vanishing,  torsion-free theorems,  
and a generic vanishing theorem for 
$K_X\otimes F\otimes \mathcal J(h)$, where 
$K_X$ is the canonical bundle of $X$, 
$F$ is a pseudo-effective line bundle on $X$, and 
$\mathcal J(h)$ is the multiplier ideal sheaf associated with 
a singular Hermitian metric $h$. 
More specifically, this paper contains three main contributions:  
The first contribution is to prove a generalization of Koll\'ar's injectivity theorem 
for adjoint bundles twisted by suitable multiplier ideal sheaves (Theorem \ref{f-thmA}). 
The second contribution is to establish a Bertini-type theorem 
on the restriction of multiplier ideal sheaves (Theorem \ref{f-thm1.10}). 
Theorem \ref{f-thm1.10} provides a useful tool and 
enables us to use the inductive argument on dimension. 
The third contribution is to deduce various results related to vanishing theorems 
as applications of Theorem \ref{f-thm1.10} and Theorem \ref{f-thmA}, 
(Theorems \ref{f-thmB}, \ref{f-thmC}, \ref{f-thmD}, \ref{f-thmE}, and \ref{f-thmF}). 
Since we adopt the transcendental method, 
we can formulate all the results  for  
singular Hermitian metrics and (quasi-)plurisubharmonic functions 
with {\textit{arbitrary}} singularities. 
This is one of the main advantages of our approach in this paper. 
The Hodge theoretic approach explained before does not 
work for singular Hermitian metrics with nonalgebraic singularities. 
Furthermore, we sometimes have to deal with 
singular Hermitian metrics with nonalgebraic singularities 
for several important applications in birational geometry  
even when we consider problems in algebraic geometry 
(see, for example, \cite{siu}, \cite{paun}, \cite{dhp}, \cite{gongyo-matsumura}, 
and \cite{lazic-peternell}). 
Therefore, it is worth formulating and proving various 
results for singular Hermitian metrics with arbitrary singularities 
although they are much more complicated than 
singular Hermitian metrics with only algebraic singularities. 
 
\subsection{Main results}\label{f-subsec1.1}
Here, we explain the main results of this paper 
(Theorems \ref{f-thmA}, \ref{f-thmB}, \ref{f-thmC}, 
\ref{f-thmD}, \ref{f-thmE}, \ref{f-thmF}, and Theorem \ref{f-thm1.10}). 
Theorem \ref{f-thmA} and Theorem \ref{f-thm1.10} play important roles in this paper, 
and other results follow from Theorem \ref{f-thmA} and Theorem \ref{f-thm1.10} 
(see Proposition \ref{f-prop1.9}). 
We first recall the definition of 
pseudo-effective line bundles on compact complex manifolds. 

\begin{defn}[Pseudo-effective 
line bundles]\label{f-def1.2}
Let $F$ be a holomorphic line bundle on a compact complex 
manifold $X$. 
We say that $F$ is pseudo-effective 
if there exists a singular Hermitian metric 
$h$ on $F$ with $\sqrt{-1}\Theta_h(F)\geq 0$. 
When $X$ is projective, it is well known that 
$F$ is pseudo-effective 
if and only if $F$ is pseudo-effective in the usual sense, that is, 
$F^{\otimes m}\otimes H$ is big for any ample line bundle $H$ on $X$ 
and any positive integer $m$. 
\end{defn}

The first result is an Enoki-type injectivity theorem. 

\begin{theorema}[Enoki-type injectivity]\label{f-thmA}
Let $F$ be a holomorphic line bundle on a compact K\"ahler 
manifold $X$ and let 
$h$ be a singular Hermitian metric on $F$. 
Let $M$ be a holomorphic line bundle on $X$ and 
let $h_M$ be a smooth Hermitian metric on $M$. 
Assume that 
\begin{equation*}
\sqrt{-1}\Theta_{h_M}(M)\geq 0 \quad \text{and}\quad 
\sqrt{-1}(\Theta_h(F)-t \Theta 
_{h_M}(M))\geq 0
\end{equation*}
for some $t>0$. 
Let 
$s$ be a nonzero global section of $M$. 
Then the map 
\begin{equation*}
\times s: 
H^i(X, K_X\otimes F\otimes 
\mathcal J(h))\to 
H^i(X, K_X\otimes F\otimes \mathcal J(h)\otimes 
M)
\end{equation*} 
induced by $\otimes s$ is injective for every $i$, where 
$K_X$ is the 
canonical bundle of $X$ and 
$\mathcal J(h)$ is the multiplier ideal 
sheaf of $h$. 
\end{theorema}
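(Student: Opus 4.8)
The plan is to prove Theorem~\ref{f-thmA} by the method of harmonic integrals on a noncompact K\"ahler manifold, following the strategy initiated by Enoki and Takegoshi but adapted to singular hermitian metrics with arbitrary singularities. The fundamental idea is that a cohomology class in $H^i(X,\omega_X\otimes F\otimes\mathcal J(h))$ that is killed by $\times s$ should be represented by a harmonic form (with respect to suitable complete K\"ahler metrics and the metric $h$) which, because of the semipositivity hypotheses on $\sqrt{-1}\Theta_h(F)$ and $\sqrt{-1}(\Theta_h(F)-t\Theta_{h_M}(M))$, must already be $\dbar$-closed \emph{after} multiplication by $s$ in a way that forces it to be zero.

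\smallskip

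First I would set up the appropriate analytic framework. Since $h$ may be singular, one cannot work with $h$ directly; the plan is to regularize, writing $h$ as a decreasing limit of metrics with analytic singularities (or with prescribed mild singularities) using Demailly's approximation theorem, while keeping control of the curvature from below up to a small loss $-\e\omega$. One then removes the singular locus of $s$ together with the (algebraic) singular locus of the approximating metrics, and equips the resulting noncompact manifold $Y=X\setminus Z$ with a complete K\"ahler metric. On $Y$ one has a genuine Hodge-type decomposition for the $L^2$ $\dbar$-complex of $(n,i)$-forms with values in $F$ with respect to $h$ (and a complete K\"ahler metric), so every $L^2$ cohomology class has a harmonic representative $u$. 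The multiplier ideal sheaf $\mathcal J(h)$ enters precisely because the $L^2$ condition with weight $h$ computes the cohomology of $\omega_X\otimes F\otimes\mathcal J(h)$; this is where I would invoke the de~Rham--Weil isomorphism between \v{C}ech/sheaf cohomology and $L^2$-$\dbar$-cohomology, and I expect I will need a lemma (presumably Theorem~\ref{f-thm1.10}, referenced in the excerpt) guaranteeing that classes extend across $Z$ and that $L^2$ harmonic representatives exist.

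\smallskip

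The analytic heart of the argument is the following: if $u$ is an $F$-valued harmonic $(n,i)$-form, then the Bochner--Kodaira--Nakano identity together with $\sqrt{-1}\Theta_h(F)\geq 0$ forces the curvature term $\langle\sqrt{-1}\Theta_h(F)\Lambda u,u\rangle$ to vanish pointwise, i.e. $u$ is ``pointwise annihilated'' by the curvature operator; in particular $\dbar(su)=s\dbar u=0$ and $D'(su)$ is controlled. One then checks that $su$, which a priori has weight $he^{-\varphi_M}$ where $\varphi_M$ is the local weight of $h_M$, is \emph{also} harmonic for the metric $he^{-\varphi_M}$ on $F\otimes M$: here the second hypothesis $\sqrt{-1}(\Theta_h(F)-t\Theta_{h_M}(M))\geq 0$, combined with the known vanishing of the curvature term for $\sqrt{-1}\Theta_h(F)$, yields that the curvature term for $\sqrt{-1}\Theta_{h_M}(M)$ also vanishes on $u$, so no extra positive contribution appears and $su$ remains harmonic. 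Since $\times s$ is assumed to kill the class of $u$, the form $su$ is $\dbar$-exact and harmonic, hence $su\equiv 0$ on $Y$; as $s\not\equiv 0$ this gives $u\equiv 0$, so the original class was zero.

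\smallskip

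The main obstacle, as I see it, is the interplay between the \emph{arbitrary} (nonalgebraic) singularities of $h$ and the need for complete K\"ahler metrics and honest Hodge decompositions. Concretely: the regularization of $h$ shifts the multiplier ideal sheaf, so one must pass to the limit over the approximating metrics and show that $\mathcal J(h_\e)$ converges to $\mathcal J(h)$ in a way compatible with the harmonic representatives and their norms; one needs uniform $L^2$-estimates and a diagonal/Cantor-type extraction argument, together with a monotone-convergence or Fatou argument to control the limiting harmonic form. Equally delicate is checking that the ``harmonicity is preserved under $\times s$'' step survives the $\e\to 0$ limit, since the small curvature losses $-\e\omega$ must be shown not to destroy the pointwise vanishing of the curvature terms in the limit. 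I expect these convergence arguments — rather than the formal Bochner--Kodaira computation — to be where the real work lies, and I would organize the proof so that the key technical input about $L^2$-cohomology on the noncompact locus and the extension across $Z$ is isolated as Theorem~\ref{f-thm1.10}, cited as a black box here.
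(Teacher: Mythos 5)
Your overall framework (Demailly approximation $h_\e$ of $h$, complete K\"ahler metrics on $Y_\e=X\setminus Z_\e$, harmonic representatives, Bochner--Kodaira--Nakano, and a double limit in $\e$ and in the completeness parameter) matches the paper's Section \ref{f-sec5}. But the mechanism you propose at the ``analytic heart'' --- that the curvature term vanishes pointwise on the harmonic representative, that $su$ is therefore again harmonic, and that a harmonic exact form is zero --- is the classical Enoki argument for a genuinely semipositive smooth metric, and it breaks down here at every finite level of approximation. After regularization one only has $\sqrt{-1}\Theta_{h_\e}(F)\geq b\sqrt{-1}\Theta_{h_M}(M)-\e\omega$, so the Bochner identity gives $0=\lla \sqrt{-1}\Theta_{h_\e}(F)\Lambda u_{\e,\delta},u_{\e,\delta}\rra+\|D'^*u_{\e,\delta}\|^2$ with a curvature term that is merely bounded below by $-\e q\|u_{\e,\delta}\|^2$; one deduces only that the positive part of the curvature integrand and $\|\dbar^*_{\e,\delta}(su_{\e,\delta})\|$ tend to $0$ as $\e\to 0$, never that $su_{\e,\delta}$ is harmonic. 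You flag this as a ``delicate'' point to be checked in the limit, but there is no pointwise vanishing to recover in the limit: the conclusion is intrinsically asymptotic and quantitative.

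This creates the genuine gap: knowing $\|\dbar^*_{\e,\delta}(su_{\e,\delta})\|_{\e,\delta}\to 0$ and knowing that the class of $su$ is zero in cohomology does not by itself force $su_{\e,\delta}\to 0$. The paper's essential extra ingredient (Step \ref{f-st4}, Propositions \ref{f-prop5.9} and \ref{f-prop5.10}, Lemma \ref{f-lem5.12}) is the construction of solutions $v_{\e,\delta}$ of $\dbar v_{\e,\delta}=su_{\e,\delta}$ whose norms $\|v_{\e,\delta}\|_{\e,\delta}$ are bounded \emph{uniformly} in $\e$ and $\delta$; this requires passing through the \v{C}ech-theoretic De Rham--Weil isomorphism, local $\dbar$-estimates with constants controlled as $\delta\to 0$, Montel's theorem, and the open mapping theorem for Fr\'echet spaces. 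Only then does $\|su_{\e,\delta}\|^2_{\e,\delta}=\lla\dbar^*_{\e,\delta}su_{\e,\delta},v_{\e,\delta}\rra_{\e,\delta}\leq\|\dbar^*_{\e,\delta}su_{\e,\delta}\|_{\e,\delta}\|v_{\e,\delta}\|_{\e,\delta}\to 0$ follow, after which a double weak-limit argument (Propositions \ref{f-prop5.4}--\ref{f-prop5.7}) converts this into vanishing of the original class. Your proposal omits this uniform-solvability step entirely. A smaller but real misattribution: Theorem \ref{f-thm1.10} is a Bertini-type statement on restricting multiplier ideals to general members of free linear systems, used only in Section \ref{f-sec4} to derive Theorems \ref{f-thmB}--\ref{f-thmF} from Theorem \ref{f-thmA}; it plays no role in the proof of Theorem \ref{f-thmA} and is not the extension-across-$Z$ lemma you hope to cite as a black box.
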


\begin{rem}\label{f-rem1.3}
Let $L$ be a semipositive line bundle on $X$, 
that is, it admits a smooth Hermitian metric with semipositive curvature. 
Let $F= L^{\otimes m}$ 
and $M= L^{\otimes k}$ for positive integers $m$ and $k$. 
Then we obtain Enoki's original injectivity 
theorem (see \cite[Theorem 0.2]{enoki}) from Theorem \ref{f-thmA}. 
\end{rem}

In the case of $M=F$, 
Theorem \ref{f-thmA} has been proved in \cite{matsumura4} 
under the assumption $\sup_{X}|s|_{h} < \infty$. 
This assumption is a natural condition to 
guarantee that the multiplication map $\times s$ is well-defined. 
However, for our applications in this paper, 
we need to formulate Theorem \ref{f-thmA} 
for a different $(M, h_{M})$ from $(F, h)$. 
This formulation, which may look slightly artificial, 
is quite powerful and can produce applications, 
but raises a new difficulty in the proof: 
the set of points $x \in X$ with $\nu(h,x)>0$ is 
not necessarily contained in a proper Zariski closed set, 
although such a situation was excluded in \cite{matsumura4} 
thanks to the assumption $\sup_{X}|s|_{h} < \infty$, 
where $\nu(h,x)$ denotes the Lelong number of the local weight of $h$ at $x$. 
Compared to \cite{matsumura4}, 
Theorem \ref{f-thmA} is novel in the technique to overcome this difficulty 
(see Section \ref{f-sec5} for the technical details), 
and further, it will be generalized 
to certain noncompact manifolds 
along with other techniques (see \cite{matsumura5}).
Note that Theorem \ref{f-thmA} can be seen as a generalization  
not only of Enoki's injectivity theorem but also of the Nadel vanishing theorem. 
In Section \ref{x-sec4}, we will explain how to 
reduce Demailly's original 
formulation of the Nadel vanishing theorem (see Theorem \ref{f-thm1.4} below) to 
Theorem \ref{f-thmA} for the reader's convenience. 

\begin{thm}[{Nadel vanishing theorem due to 
Demailly:~\cite[Theorem 4.5]{demailly-numerical}}]\label{f-thm1.4}
Let $V$ be a smooth projective variety equipped with 
a K\"ahler form $\omega$. 
Let $L$ be a holomorphic line bundle on $V$ and let $h_L$ be 
a singular Hermitian metric on $L$ such that 
$\sqrt{-1}\Theta_{h_L}(L)\geq \varepsilon \omega$ for 
some $\varepsilon >0$. 
Then 
$$
H^i(V, K_V\otimes L\otimes \mathcal J(h_L))=0
$$ 
for every $i>0$,  
where $K_V$ is the 
canonical bundle of $V$ and 
$\mathcal J(h_L)$ is the multiplier ideal 
sheaf of $h_L$. 
\end{thm}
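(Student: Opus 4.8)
The plan is to deduce Theorem \ref{f-thm1.4} from the Enoki-type injectivity theorem (Theorem \ref{f-thmA}) by the classical device of combining injectivity for powers of an ample line bundle with Serre's vanishing theorem. Since $V$ is projective it is in particular a compact K\"ahler manifold, so Theorem \ref{f-thmA} applies; moreover $\mathcal{J}(h_L)$ is coherent, so $\omega_V\otimes L\otimes \mathcal{J}(h_L)$ is a coherent sheaf on $V$.

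First I would fix an ample line bundle $A$ on $V$ together with a smooth hermitian metric $h_A$ with $\sqrt{-1}\Theta_{h_A}(A)>0$, which exists by the Kodaira embedding theorem, and by compactness of $V$ a constant $C>0$ with $\sqrt{-1}\Theta_{h_A}(A)\leq C\omega$. For a positive integer $k$, put $M_k:=A^{\otimes k}$ with the smooth metric $h_{M_k}:=h_A^{\otimes k}$, so that $\sqrt{-1}\Theta_{h_{M_k}}(M_k)=k\sqrt{-1}\Theta_{h_A}(A)\geq 0$, and using $\sqrt{-1}\Theta_{h_L}(L)\geq \varepsilon\omega$ one checks
\[
\sqrt{-1}\bigl(\Theta_{h_L}(L)-t_k\,\Theta_{h_{M_k}}(M_k)\bigr)\ \geq\ (\varepsilon-t_k kC)\,\omega\ \geq\ 0
\]
for the choice $t_k:=\varepsilon/(kC)>0$. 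Hence, for any nonzero global section $s_k$ of $M_k$, Theorem \ref{f-thmA} applied with $F=L$, $h=h_L$, $M=M_k$, $h_M=h_{M_k}$ yields that
\[
\times s_k:\ H^i\bigl(V,\omega_V\otimes L\otimes \mathcal{J}(h_L)\bigr)\ \longrightarrow\ H^i\bigl(V,\omega_V\otimes L\otimes \mathcal{J}(h_L)\otimes A^{\otimes k}\bigr)
\]
is injective for every $i$.

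It then remains to choose $k$ appropriately. Since $A$ is ample, $A^{\otimes k}$ is very ample, hence base point free with $H^0(V,A^{\otimes k})\neq 0$, for all $k\gg 0$, so a nonzero section $s_k$ exists; and by Serre's vanishing theorem applied to the coherent sheaf $\omega_V\otimes L\otimes \mathcal{J}(h_L)$ we have $H^i(V,\omega_V\otimes L\otimes \mathcal{J}(h_L)\otimes A^{\otimes k})=0$ for all $i>0$ once $k$ is large enough. Fixing such a $k$, the injectivity of $\times s_k$ forces $H^i(V,\omega_V\otimes L\otimes \mathcal{J}(h_L))=0$ for every $i>0$, as desired.

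This reduction is essentially formal once Theorem \ref{f-thmA} is available. The only point needing care is the verification of the curvature hypothesis of Theorem \ref{f-thmA}: the strict positivity $\sqrt{-1}\Theta_{h_L}(L)\geq \varepsilon\omega$ must leave room to absorb a small multiple of $\sqrt{-1}\Theta_{h_{M_k}}(M_k)$, which is precisely where the assumption $\varepsilon>0$ of the Nadel vanishing theorem enters, and this is handled by the compactness bound $\sqrt{-1}\Theta_{h_A}(A)\leq C\omega$ together with the choice $t_k=\varepsilon/(kC)$. I do not expect any genuine obstacle here; the whole analytic substance sits in Theorem \ref{f-thmA} itself.
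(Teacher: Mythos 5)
Your proposal is correct and follows essentially the same route as the paper's own proof in Section \ref{f-sec4}: fix an ample $A$ with a positive smooth metric, take $k$ large enough for Serre vanishing of $H^i(V,\omega_V\otimes L\otimes \mathcal J(h_L)\otimes A^{\otimes k})$, verify the curvature hypothesis of Theorem \ref{f-thmA} with a sufficiently small $t$, and conclude from the injectivity of $\times s_k$. Your explicit choice $t_k=\varepsilon/(kC)$ just spells out the paper's ``for some $0<t\ll 1$.''
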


A semiample line bundle is always semipositive. 
Thus, as a direct consequence of Theorem \ref{f-thmA}, 
we obtain Theorem \ref{f-thmB}, which is a generalization of 
Koll\'ar's original injectivity theorem (see \cite{kollar-higher1}). 

\begin{theorema}[Koll\'ar-type injectivity]\label{f-thmB}
Let $F$ be a holomorphic line bundle on a compact K\"ahler manifold $X$ and let 
$h$ be a singular Hermitian metric on $F$ such that 
$\sqrt{-1}\Theta_h (F)\geq 0$. 
Let $N_1$ and $N_2$ be semiample line bundles on 
$X$ and let 
$s$ be a nonzero global section of $N_2$. 
Assume that $N_1^{\otimes a}\simeq N_2^{\otimes b}$ 
for some positive integers $a$ and $b$.  
Then the map 
\begin{equation*}
\times s: 
H^i(X, K_X\otimes F\otimes 
\mathcal J(h)\otimes N_1)\to 
H^i(X, K_X\otimes F\otimes \mathcal J(h)\otimes 
N_1\otimes N_2)
\end{equation*} 
induced by $\otimes s$ is injective for every $i$, 
where $K_X$ is the 
canonical bundle of $X$ and 
$\mathcal J(h)$ is the multiplier ideal 
sheaf of $h$. 
\end{theorema}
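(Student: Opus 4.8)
The plan is to deduce Theorem \ref{f-thmB} directly from Theorem \ref{f-thmA} by absorbing $N_1$ into the twisting line bundle and by choosing the smooth metrics so that the curvature difference appearing in Theorem \ref{f-thmA} collapses exactly to $\sqrt{-1}\Theta_h(F)\ge 0$. First, since $N_2$ is semiample it is semipositive, so it carries a smooth hermitian metric $h_{N_2}$ with $\sqrt{-1}\Theta_{h_{N_2}}(N_2)\ge 0$ (pull back a Fubini--Study metric under a morphism defined by $|N_2^{\otimes k}|$ for suitable $k>0$ and take a $k$-th root). Using the isomorphism $N_1^{\otimes a}\simeq N_2^{\otimes b}$, the smooth metric $h_{N_2}^{\otimes b}$ on $N_2^{\otimes b}$ transports to a smooth metric on $N_1^{\otimes a}$; let $h_{N_1}$ be its $a$-th root, a smooth hermitian metric on $N_1$. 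From $a\,\sqrt{-1}\Theta_{h_{N_1}}(N_1)=b\,\sqrt{-1}\Theta_{h_{N_2}}(N_2)$ we get
$$
\sqrt{-1}\Theta_{h_{N_1}}(N_1)=\tfrac{b}{a}\,\sqrt{-1}\Theta_{h_{N_2}}(N_2)\ge 0 .
$$

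Next I would set $F':=F\otimes N_1$ and equip it with the singular hermitian metric $h':=h\otimes h_{N_1}$, and put $M:=N_2$, $h_M:=h_{N_2}$, and $t:=b/a>0$. Then $\sqrt{-1}\Theta_{h_M}(M)\ge 0$, and since $h_{N_1}$ is smooth,
$$
\sqrt{-1}\bigl(\Theta_{h'}(F')-t\,\Theta_{h_M}(M)\bigr)
=\sqrt{-1}\Theta_h(F)+\tfrac{b}{a}\sqrt{-1}\Theta_{h_{N_2}}(N_2)-\tfrac{b}{a}\sqrt{-1}\Theta_{h_{N_2}}(N_2)
=\sqrt{-1}\Theta_h(F)\ge 0 .
$$
Thus the hypotheses of Theorem \ref{f-thmA} hold for $F'$, $h'$, $M$, $h_M$, $t$, and the given nonzero section $s$ of $M=N_2$. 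Moreover, because $h_{N_1}$ is smooth, its local weights are bounded, so $\mathcal J(h')=\mathcal J(h)$; hence $\omega_X\otimes F'\otimes\mathcal J(h')=\omega_X\otimes F\otimes\mathcal J(h)\otimes N_1$, and likewise after tensoring with $N_2$. Therefore Theorem \ref{f-thmA} gives that $\times s\colon H^i(X,\omega_X\otimes F\otimes\mathcal J(h)\otimes N_1)\to H^i(X,\omega_X\otimes F\otimes\mathcal J(h)\otimes N_1\otimes N_2)$ is injective for every $i$, which is the assertion.

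There is no real difficulty here beyond bookkeeping; the only point that requires care — and the only place the hypothesis $N_1^{\otimes a}\simeq N_2^{\otimes b}$ is used — is the construction of the smooth metric $h_{N_1}$ on $N_1$ whose curvature is an \emph{exact} positive rational multiple of the curvature of $h_{N_2}$ on $N_2$, so that the curvature difference in Theorem \ref{f-thmA} reduces to the nonnegative form $\sqrt{-1}\Theta_h(F)$ with no loss. (I would also remark, as the text does, that if $N_1\simeq L^{\otimes m}$ and $N_2\simeq L^{\otimes k}$ for a semipositive $L$, this is just the special case $a=k$, $b=m$.)
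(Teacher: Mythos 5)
Your argument is correct and is essentially the paper's own proof (Step 1 of the proof of Proposition \ref{f-prop1.9}): choose compatible smooth semipositive metrics on $N_1$ and $N_2$ via the isomorphism $N_1^{\otimes a}\simeq N_2^{\otimes b}$, absorb $N_1$ into $F$, note $\mathcal J(h\,h_{N_1})=\mathcal J(h)$, and apply Theorem \ref{f-thmA}. The only cosmetic differences are that the paper starts from a semipositive metric on $N_1$ and transports it to $N_2$ (taking $0<t\ll 1$), whereas you go the other way and take $t=b/a$ exactly; both verify the same curvature hypothesis.
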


\begin{rem}\label{f-rem1.5}
(1) 
Let $X$ be a smooth projective variety and 
$(F, h)$ be a trivial Hermitian line bundle. 
Then we obtain  Koll\'ar's original injectivity theorem (see \cite[Theorem 2.2]
{kollar-higher1}) from Theorem \ref{f-thmB}. 
\\
(2)
For the proof of Theorem \ref{f-thmB}, 
we may assume that $b=1$, that is, 
$N_2\simeq N_1^{\otimes a}$ 
by replacing $s$ with $s^b$. 
We note that the composition 
\begin{equation*}
\begin{split}
H^i(X, K_X\otimes F\otimes 
\mathcal J(h)\otimes N_1)
&\overset{\times s}{\longrightarrow} 
H^i(X, K_X\otimes F\otimes \mathcal J(h)\otimes 
N_1\otimes N_2)\\
&\overset{\times s^{b-1}}{\longrightarrow}
H^i(X, K_X\otimes F\otimes \mathcal J(h)\otimes 
N_1\otimes N_2^{\otimes b})
\end{split}
\end{equation*} 
is the map $\times s^b$ induced by $\otimes s^b$. 
\end{rem}

Theorem \ref{f-thmC} is a generalization of 
Koll\'ar's torsion-free theorem and Theorem \ref{f-thmD} is 
a generalization of Koll\'ar's vanishing theorem (see 
\cite[Theorem 2.1]{kollar-higher1}). 

\begin{theorema}[Koll\'ar-type torsion-freeness]\label{f-thmC} 
Let $f\colon X\to Y$ be a surjective morphism from a 
compact K\"ahler manifold $X$ onto a projective 
variety $Y$. 
Let $F$ be a holomorphic line bundle on $X$ and let 
$h$ be a singular Hermitian metric on $F$ such that 
$\sqrt{-1}\Theta_h (F)\geq 0$. 
Then $$R^if_*(K_X\otimes F\otimes \mathcal J(h))$$ is 
torsion-free for every $i$, where 
$K_X$ is the 
canonical bundle of $X$ and 
$\mathcal J(h)$ is the multiplier ideal 
sheaf of $h$. 
\end{theorema}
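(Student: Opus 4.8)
The plan is to deduce the torsion-freeness of $\mathcal G:=R^if_*(\omega_X\otimes F\otimes\mathcal J(h))$ on $Y$ from the injectivity statement of Theorem \ref{f-thmB} on $X$, by the standard device of twisting with high powers of an ample line bundle on $Y$ and passing to global sections. Write $\mathcal L_0:=\omega_X\otimes F\otimes\mathcal J(h)$; since $\mathcal L_0$ is coherent and $f$ is proper, each $R^qf_*\mathcal L_0$ is a coherent sheaf on the projective variety $Y$. Fix an ample line bundle $A$ on $Y$; a sufficiently high power of $A$ is base point free, so $f^{*}A$ is semiample on $X$. By Serre's theorems I would fix $N\gg0$, uniformly for all twists $M\ge N$, so that $H^j(Y,R^qf_*\mathcal L_0\otimes A^{\otimes M})=0$ for all $j>0$ and all $q$; then the projection formula together with the Leray spectral sequence for $f$ gives a natural isomorphism
\begin{equation*}
H^i(X,\mathcal L_0\otimes f^{*}A^{\otimes M})\;\cong\;H^0(Y,\mathcal G\otimes A^{\otimes M})\qquad\text{for all }M\ge N,
\end{equation*}
compatible with multiplication by sections of powers of $A$ pulled back from $Y$.

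Next, suppose for contradiction that the torsion subsheaf $\mathcal T\subset\mathcal G$ is nonzero; then $\Supp\mathcal T$ is a proper closed subvariety of $Y$. Enlarging $N$ if necessary, $\mathcal T\otimes A^{\otimes N}$ is globally generated, so $H^0(Y,\mathcal T\otimes A^{\otimes N})\neq0$, and by left exactness this is a nonzero subspace of $H^0(Y,\mathcal G\otimes A^{\otimes N})$; pick a nonzero $\tau$ in it. Let $\mathcal I\subset\mathcal O_Y$ be the annihilator ideal sheaf of $\tau$; it is nonzero and its zero locus is contained in $\Supp\mathcal T$. For $\ell\gg0$ the sheaf $\mathcal I\otimes A^{\otimes\ell}$ is globally generated, hence has a nonzero global section $s$, which we regard as a section of $A^{\otimes\ell}$ annihilating $\tau$; thus $s\cdot\tau=0$ in $H^0(Y,\mathcal G\otimes A^{\otimes(N+\ell)})$. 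On the other hand, apply Theorem \ref{f-thmB} on the compact K\"ahler manifold $X$ to the line bundle $F$ with its metric $h$ (so $\sqrt{-1}\Theta_h(F)\ge0$), taking $N_1:=f^{*}A^{\otimes N}$ and $N_2:=f^{*}A^{\otimes\ell}$ (both semiample, with $N_1^{\otimes\ell}\simeq N_2^{\otimes N}$) and the nonzero section $f^{*}s$ of $N_2$ (nonzero because $f$ is surjective). Theorem \ref{f-thmB} gives that
\begin{equation*}
\times f^{*}s:\;H^i(X,\mathcal L_0\otimes f^{*}A^{\otimes N})\longrightarrow H^i(X,\mathcal L_0\otimes f^{*}A^{\otimes(N+\ell)})
\end{equation*}
is injective; transporting this through the two Leray isomorphisms above (valid since $N,N+\ell\ge N$) shows that $\times s:H^0(Y,\mathcal G\otimes A^{\otimes N})\to H^0(Y,\mathcal G\otimes A^{\otimes(N+\ell)})$ is injective, contradicting $s\cdot\tau=0$ with $\tau\ne0$. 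Hence $\mathcal T=0$, i.e.\ $\mathcal G=R^if_*(\omega_X\otimes F\otimes\mathcal J(h))$ is torsion-free for every $i$.

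The point that needs genuine care is the last transfer step: one must check that the Leray edge isomorphism $H^i(X,-)\xrightarrow{\sim}H^0(Y,R^if_*(-))$ does intertwine multiplication by $f^{*}s$ on $X$ with multiplication by $s$ on $Y$ — a functoriality plus projection-formula verification — and that the various ``$M\ge N$'' thresholds, the choice of $\tau$, and the choice of $\ell$ can be arranged without circularity, which they can, because the Serre vanishing threshold depends only on $Y$, $A$, and the finitely many coherent sheaves $R^qf_*\mathcal L_0$, not on $\tau$ or $\ell$. Everything else is formal once Theorem \ref{f-thmB} is granted; note in particular that no genericity of $s$ is used, and it is exactly this freedom that allows us to choose $s$ vanishing on the torsion locus $\Supp\mathcal T$.
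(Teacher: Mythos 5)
Your argument is correct and is essentially the same as the paper's (Step \ref{step2} of the proof of Proposition \ref{f-prop1.9}): both deduce Theorem \ref{f-thmC} from Theorem \ref{f-thmB} by twisting with a high power of an ample line bundle on $Y$, using Serre vanishing and the Leray spectral sequence to identify $H^i(X,-)$ with $H^0(Y,R^if_*(-))$, and then contradicting the injectivity of multiplication by a pulled-back section chosen to annihilate a torsion section. The only difference is cosmetic: you produce the annihilating section via the annihilator ideal of a global torsion section, while the paper starts from a non-injective multiplication map and globally generates its kernel.
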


\begin{theorema}[Koll\'ar-type vanishing theorem]\label{f-thmD} 
Let $f\colon X\to Y$ be a surjective morphism from a compact 
K\"ahler manifold $X$ onto a projective variety $Y$. 
Let $F$ be a holomorphic line bundle on $X$ and let 
$h$ be a singular Hermitian metric on $F$ such that 
$\sqrt{-1}\Theta_h (F)\geq 0$. 
Let $N$ be a holomorphic line bundle on $X$. 
We assume that there exist 
positive integers $a$ and $b$ and an ample line bundle $H$ on $Y$ 
such that $N^{\otimes a}\simeq 
f^*H^{\otimes b}$. 
Then we obtain that 
\begin{equation*}
H^i(Y, R^jf_*(K_X\otimes F\otimes \mathcal J(h)\otimes N))=0
\end{equation*}
for every $i>0$ and $j$, where 
$K_X$ is the 
canonical bundle of $X$ and 
$\mathcal J(h)$ is the multiplier ideal 
sheaf of $h$. 
\end{theorema}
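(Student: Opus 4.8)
The plan is to deduce Theorem~\ref{f-thmD} from the Koll\'ar-type injectivity theorem (Theorem~\ref{f-thmB}) and the torsion-freeness theorem (Theorem~\ref{f-thmC}) by the two induction steps with which Koll\'ar passed from injectivity to vanishing. Write $\mathcal{G}:=\omega_X\otimes F\otimes\mathcal{J}(h)$ and $\mathcal{F}^j:=R^jf_*(\mathcal{G}\otimes N)$. Choose $c\ge 1$ with $H^{\otimes c}$ free and set $m:=bc$; then $H^{\otimes m}$ is free, $f^*H^{\otimes m}\simeq N^{\otimes ac}$ is free, and $N$---hence also $N\otimes f^*H^{\otimes km}$ for every $k\ge 0$---is semiample. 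By Serre vanishing on the projective variety $Y$ one has $H^i(Y,\mathcal{F}^j\otimes H^{\otimes km})=0$ for all $i>0$, all $j$, and all $k\gg 0$. I would prove this for every $k\ge 0$ by descending induction on $k$ (the case $k=0$ being the assertion), running at the same time an outer induction on $\dim Y$ with the trivial base $\dim Y=0$.

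For the descending step, assume the statement for $k+1$ and fix a general member $B\in|H^{\otimes m}|$. Then $X_B:=f^{-1}(B)$ is a smooth compact K\"ahler submanifold of $X$ (Bertini for the free linear system on $X$ pulled back from $|H^{\otimes m}|$), $f_B:=f|_{X_B}\colon X_B\to B$ is surjective onto the projective variety $B$, and $\dim B=\dim Y-1$. Adjunction gives $\omega_{X_B}\simeq\omega_X|_{X_B}\otimes f_B^*(H^{\otimes m}|_B)$; the Bertini-type theorem on the restriction of multiplier ideal sheaves lets me also assume $\mathcal{J}(h)|_{X_B}=\mathcal{J}(h|_{X_B})$; and the torsion-freeness of $\mathcal{F}^j$ (Theorem~\ref{f-thmC}) makes base change along $B$ an isomorphism. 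Combining these, I identify $(\mathcal{F}^j\otimes H^{\otimes(k+1)m})|_B$ with $\mathcal{F}_B^j\otimes(H|_B)^{\otimes km}$, where $\mathcal{F}_B^j:=R^j(f_B)_*(\omega_{X_B}\otimes F|_{X_B}\otimes\mathcal{J}(h|_{X_B})\otimes N|_{X_B})$. Since $(f_B,\,N|_{X_B}\otimes f_B^*(H|_B)^{\otimes km})$ satisfies the hypotheses of Theorem~\ref{f-thmD} on $B$, the outer induction gives $H^{i-1}(B,\mathcal{F}_B^j\otimes(H|_B)^{\otimes km})=0$ for $i-1>0$. Inserting this and the inner hypothesis $H^i(Y,\mathcal{F}^j\otimes H^{\otimes(k+1)m})=0$ into the long exact cohomology sequence attached to
\begin{equation*}
0\to\mathcal{F}^j\otimes H^{\otimes km}\to\mathcal{F}^j\otimes H^{\otimes(k+1)m}\to(\mathcal{F}^j\otimes H^{\otimes(k+1)m})|_B\to 0
\end{equation*}
yields $H^i(Y,\mathcal{F}^j\otimes H^{\otimes km})=0$ for all $i\ge 2$ and all $j$.

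The case $i=1$ is where Theorem~\ref{f-thmB} enters. Put $\mathcal{G}_\ell:=\mathcal{G}\otimes N\otimes f^*H^{\otimes\ell}$, so that $R^qf_*\mathcal{G}_\ell\simeq\mathcal{F}^q\otimes H^{\otimes\ell}$ by the projection formula. Using the vanishing $H^p(Y,\mathcal{F}^q\otimes H^{\otimes km})=0$ for $p\ge 2$ just obtained, the Leray spectral sequence of $f$ for $\mathcal{G}_{km}$ has only the columns $p=0,1$ and so degenerates at $E_2$, producing a short exact sequence
\begin{equation*}
0\to H^1(Y,\mathcal{F}^{n-1}\otimes H^{\otimes km})\to H^n(X,\mathcal{G}_{km})\to H^0(Y,\mathcal{F}^n\otimes H^{\otimes km})\to 0
\end{equation*}
for every $n$, and likewise with $km$ replaced by $(k+1)m$. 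For a nonzero $s\in H^0(Y,H^{\otimes m})$ the section $f^*s$ is nonzero, some positive power of $N\otimes f^*H^{\otimes km}$ is a positive power of $f^*H^{\otimes m}$, and $f^*H^{\otimes m}$ is semiample, so Theorem~\ref{f-thmB} applied with $f^*s$ shows that $\times f^*s\colon H^n(X,\mathcal{G}_{km})\to H^n(X,\mathcal{G}_{(k+1)m})$ is injective. Restricting this injection to the subspace $H^1(Y,\mathcal{F}^{n-1}\otimes H^{\otimes km})$, on which it agrees with $\times s$, I conclude that $\times s\colon H^1(Y,\mathcal{F}^{n-1}\otimes H^{\otimes km})\to H^1(Y,\mathcal{F}^{n-1}\otimes H^{\otimes(k+1)m})$ is injective; the target vanishes by the inner hypothesis, hence $H^1(Y,\mathcal{F}^{n-1}\otimes H^{\otimes km})=0$ for all $n$. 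This closes both inductions, and $k=0$ is the theorem.

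The hard part will be the Bertini-type statement $\mathcal{J}(h)|_{X_B}=\mathcal{J}(h|_{X_B})$ for a general $B$, together with arranging that a single general $B$ works at once for the smoothness of $X_B$, the sensible restriction of $h$, the exactness of base change, and this identity: since $h$ may have arbitrary, possibly nonalgebraic, singularities, this is no longer the classical log-resolution computation and requires a genuinely analytic input, which I expect to be isolated as a separate theorem (as announced in the abstract) and to be---alongside the splitting packaged in Theorem~\ref{f-thm1.10}---the real content behind the applications.
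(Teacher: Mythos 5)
Your proposal is correct and follows essentially the same route as the paper's proof (Step 3 of the proof of Proposition \ref{f-prop1.9}): restriction to the preimage of a general member of $|H^{\otimes m}|$ chosen via Theorem \ref{f-thm1.10} and Corollary \ref{f-cor3.13}, induction on $\dim Y$ for the range $i\geq 2$, and the Leray spectral sequence combined with Theorem \ref{f-thmB} for $i=1$. The only cosmetic differences are that the paper takes a single $m\gg 0$ so that Serre vanishing handles the twisted groups in one step rather than running your descending induction on $k$, and it justifies the exactness of the pushed-forward restriction sequence by the generality of $B$ (no associated primes) rather than by invoking Theorem \ref{f-thmC}.
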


\begin{rem}\label{f-rem1.6}
(1) If $X$ is a smooth projective variety and $(F, h)$ is trivial, 
then Theorem \ref{f-thmC} is nothing but Koll\'ar's 
torsion-free theorem. 
Furthermore, if $N\simeq f^*H$, that is, 
$a=b=1$, then Theorem \ref{f-thmD} is the Koll\'ar 
vanishing theorem. 
For the details, see \cite[Theorem 2.1]{kollar-higher1}. 
\\
(2) There exists a clever proof of Koll\'ar's torsion-freeness 
by the theory of variations of Hodge structure 
(see \cite{arapura}). 
\\
(3) In \cite{matsumura6}, the second author obtained 
a natural analytic generalization of Koll\'ar's vanishing theorem, 
which corresponds to the case where $h$ is a smooth Hermitian metric 
and contains Ohsawa's vanishing 
theorem (see \cite{ohsawa-vanishing}) as a special case. 
\\
(4) In \cite{fujino-amc}, the first author proved a vanishing theorem 
containing both Theorem \ref{f-thm1.4} and 
Theorem \ref{f-thmD} as special 
cases, which is called the vanishing theorem of Koll\'ar--Nadel type. 
\end{rem}

By combining Theorem \ref{f-thmD} with the Castelnuovo--Mumford regularity, 
we can easily obtain Corollary \ref{f-cor1.7}, which is a complete 
generalization of \cite[Lemma 3.35 and Remark 3.36]{horing}. 
The proof of \cite[Lemma 3.35]{horing} depends on a 
generalization of the Ohsawa--Takegoshi $L^2$ extension theorem. 
We note that H\"oring claims the weak positivity of $f_*(K_{X/Y}\otimes 
F)$ under some extra assumptions by using \cite[Lemma 3.35]{horing}. 
For the details, see \cite[3.H Multiplier ideals]{horing}. 

\begin{cor}\label{f-cor1.7}
Let $f\colon X\to Y$ be a surjective morphism 
from a compact K\"ahler manifold $X$ onto a 
projective variety $Y$. 
Let $F$ be a holomorphic line bundle 
on $X$ and let $h$ be a singular Hermitian metric on $F$ such that 
$\sqrt{-1}\Theta_h(F)\geq 0$. 
Let $H$ be an ample and globally generated line bundle on $Y$. 
Then 
$$
R^if_*(K_X\otimes F\otimes \mathcal J(h))\otimes 
H^{\otimes m}
$$ 
is globally generated for every $i\geq 0$ and 
$m\geq \dim Y+1$, where 
$K_X$ is the canonical bundle of $X$ and 
$\mathcal J(h)$ is the multiplier ideal sheaf of $h$. 
\end{cor}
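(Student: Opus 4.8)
The plan is to derive the corollary from Theorem \ref{f-thmD} by means of the Castelnuovo--Mumford regularity criterion. Recall that, for a coherent sheaf $\mathcal G$ on the projective variety $Y$ and the given basepoint-free ample line bundle $H$, one says that $\mathcal G$ is $p$-\emph{regular} with respect to $H$ if
\[
H^j\bigl(Y, \mathcal G\otimes H^{\otimes (p-j)}\bigr)=0 \qquad \text{for every } j>0,
\]
and Mumford's theorem (see \cite[Theorem 1.8.5]{lazarsfeld-book1}) then guarantees that $\mathcal G\otimes H^{\otimes m}$ is globally generated for every $m\geq p$. Thus it is enough to prove that $\mathcal G:=R^if_*(\omega_X\otimes F\otimes \mathcal J(h))$ is $(\dim Y+1)$-regular with respect to $H$. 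Note that $\mathcal G$ is a coherent sheaf on $Y$: this follows from Grauert's direct image theorem, since $f$ is proper and $\omega_X\otimes F\otimes \mathcal J(h)$ is coherent, and by GAGA we may freely pass to the algebraic category on $Y$.

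Set $n:=\dim Y$ and fix $j>0$; we must show $H^j(Y, \mathcal G\otimes H^{\otimes (n+1-j)})=0$. If $j>n$ this is automatic by Grothendieck's vanishing theorem, since $\mathcal G\otimes H^{\otimes (n+1-j)}$ is a coherent sheaf on the $n$-dimensional variety $Y$. If $1\leq j\leq n$, then $n+1-j\geq 1$, so $H^{\otimes(n+1-j)}$ is again ample, and by the projection formula
\[
\mathcal G\otimes H^{\otimes(n+1-j)}\simeq R^if_*\bigl(\omega_X\otimes F\otimes \mathcal J(h)\otimes f^*H^{\otimes(n+1-j)}\bigr).
\]
Now apply Theorem \ref{f-thmD} with $N:=f^*H^{\otimes(n+1-j)}$ and with $a=1$, $b=n+1-j$, so that $N^{\otimes a}\simeq f^*H^{\otimes b}$: it yields $H^j(Y, R^if_*(\omega_X\otimes F\otimes \mathcal J(h)\otimes N))=0$, which is precisely the vanishing we need. (Here the outer cohomological degree $j$ plays the role of the index required to be positive in Theorem \ref{f-thmD}, while the direct-image degree $i\geq 0$ is the free index there.) Hence $\mathcal G$ is $(n+1)$-regular, and Mumford's theorem finishes the proof.

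This argument is essentially formal once Theorem \ref{f-thmD} is in hand, so there is no serious obstacle; the only points deserving a little attention are that $Y$ is allowed to be singular---which causes no trouble, since Castelnuovo--Mumford regularity and Grothendieck vanishing hold on an arbitrary projective variety---that $X$ is merely assumed to be K\"ahler---coherence of $R^if_*$ still follows from Grauert's theorem because $f$ is proper---and the numerical bound $m\geq\dim Y+1$, which is forced exactly by the inequality $n+1-j\geq 1$ needed in order to invoke Theorem \ref{f-thmD} in the nontrivial range $1\leq j\leq n$.
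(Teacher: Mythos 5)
Your proof is correct and follows essentially the same route as the paper: the paper also deduces the corollary from Theorem \ref{f-thmD} via Castelnuovo--Mumford regularity, obtaining $H^p(Y, R^if_*(\omega_X\otimes F\otimes \mathcal J(h))\otimes H^{\otimes (m-p)})=0$ for $p\geq 1$ and then invoking Mumford's theorem. Your explicit treatment of the range $j>\dim Y$ by Grothendieck vanishing and of the coherence of the direct images is a slightly more careful write-up of the same argument.
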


As a direct consequence of Theorem \ref{f-thmD}, we obtain Theorem \ref{f-thmE}. 
See Definition \ref{f-def1.8} for the definition of 
GV-sheaves in the sense of Pareschi and Popa 
and see \cite[Theorem 25.5 and Definition 26.3]{schnell} for the details of GV-sheaves. 

\begin{theorema}[GV-sheaves]\label{f-thmE}
Let $f\colon X\to A$ be a morphism from a 
compact K\"ahler manifold 
$X$ to an Abelian variety $A$. 
Let $F$ be a holomorphic line bundle on $X$ and let 
$h$ be a singular Hermitian metric on $F$ such that 
$\sqrt{-1}\Theta_h (F)\geq 0$. 
Then 
$$R^if_*(K_X\otimes F\otimes \mathcal J(h))
$$ 
is a GV-sheaf for every $i$, 
where $K_X$ is the canonical bundle of $X$ and 
$\mathcal J(h)$ is the multiplier ideal sheaf of $h$.  
\end{theorema}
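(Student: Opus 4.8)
The plan is to deduce Theorem \ref{f-thmE} from Theorem \ref{f-thmD} via the standard cohomological criterion for GV-sheaves. Recall that by the work of Hacon, and Pareschi--Popa (in the form recorded in \cite{schnell}), a coherent sheaf $\mathcal{F}$ on an Abelian variety $A$ is a GV-sheaf if and only if for every sufficiently positive ample line bundle $H$ on the dual Abelian variety $\widehat{A}$ (equivalently, after pulling back by a sufficiently large multiplication map, or using the Fourier--Mukai transform), one has $H^i(A, \mathcal{F}\otimes P)=0$ for $i>0$ and $P$ a generic element of $\operatorname{Pic}^0(A)$; more precisely the criterion I would use is that $\mathcal{F}$ is GV if and only if for one (hence every) sufficiently ample $H$ on $\widehat{A}$, $H^i(A, \mathcal{F}\otimes \widehat{H})=0$ for all $i>0$, where $\widehat{H}$ is the Fourier--Mukai transform, or the even more elementary reformulation: $\mathcal{F}$ is GV iff $R^i\widehat{S}(\mathcal{F}^\vee)=0$ for $i<g$. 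Rather than the dual-side formulation, the cleanest route is: $\mathcal{F}$ on $A$ is a GV-sheaf if and only if for every ample line bundle $H$ on $A$ and every $m\gg 0$, $\mathcal{F}\otimes H^{\otimes m}$ is globally generated and $H^i(A,\mathcal{F}\otimes H^{\otimes m}\otimes P)=0$ for $i>0$ and all $P\in\operatorname{Pic}^0(A)$ --- but in fact the characterization we want is the one phrased through an isogeny, which I now make precise.

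First I would recall the isogeny characterization: writing $\mu_\ell:A\to A$ for multiplication by $\ell$, a coherent sheaf $\mathcal{F}$ on $A$ is GV precisely when, for some ample line bundle $H$ on $A$ and all $\ell$ sufficiently divisible, $H^i(A,\mathcal{F}\otimes \mu_\ell^* H)=0$ for all $i>0$. (This is exactly the content used in \cite[Section 25--26]{schnell}.) So the strategy reduces to producing such vanishing for $\mathcal{F}=R^if_*(\omega_X\otimes F\otimes \mathcal{J}(h))$. Set $Y=A$ in Theorem \ref{f-thmD}. Pick an ample line bundle $H$ on $A$; then $\mu_\ell^*H$ is again ample on $A$, and moreover $\mu_\ell^*H$ is a $b$-th power times... more carefully: the point is that we want a line bundle $N$ on $X$ with $N^{\otimes a}\simeq f^*(H')^{\otimes b}$ for an ample $H'$ on $A$, and then $H^i(A,R^jf_*(\omega_X\otimes F\otimes\mathcal{J}(h)\otimes N))=0$ for $i>0$. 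The natural choice is $N=f^*L$ for a line bundle $L$ on $A$ which is a torsion-twist of an ample line bundle: given $P\in\operatorname{Pic}^0(A)$ of finite order $r$, the line bundle $H\otimes P$ satisfies $(H\otimes P)^{\otimes r}\simeq H^{\otimes r}$, so $N=f^*(H\otimes P)$ satisfies $N^{\otimes r}\simeq f^*H^{\otimes r}$, and Theorem \ref{f-thmD} gives $H^i(A, R^jf_*(\omega_X\otimes F\otimes\mathcal{J}(h))\otimes (H\otimes P))=0$ for $i>0$, using the projection formula $R^jf_*(\omega_X\otimes F\otimes\mathcal{J}(h)\otimes f^*(H\otimes P))\simeq R^jf_*(\omega_X\otimes F\otimes\mathcal{J}(h))\otimes(H\otimes P)$.

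Next I would convert this into the isogeny statement. Under $\mu_\ell$, the line bundle $\mu_\ell^* H$ decomposes, after a translation, as a sum of copies of $H$ twisted by the $\ell^2$-torsion points of $\operatorname{Pic}^0(A)$ (this is the standard computation $\mu_\ell^*H\cong \bigoplus_{P\in K}(H\otimes P)$ on the pushforward side, i.e. $(\mu_\ell)_*\mathcal{O}_A=\bigoplus_{P\in A[\ell]}P$ and then $\mu_{\ell*}\mu_\ell^*H = H\otimes(\mu_\ell)_*\mathcal{O}_A = \bigoplus_{P} H\otimes P$, where the $P$ run over the finite group $\widehat{A}[\ell]$). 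Hence $H^i(A,\mathcal{F}\otimes\mu_\ell^*H)=H^i(A,(\mu_\ell)_*(\mathcal{F}\otimes\mu_\ell^*H))$ --- wait, that requires $\mu_\ell$ finite, which it is --- $=H^i(A, \mathcal{F}\otimes \mu_{\ell*}\mu_\ell^* H)=\bigoplus_{P\in \widehat A[\ell]} H^i(A,\mathcal{F}\otimes H\otimes P)$, and every summand vanishes for $i>0$ by the previous paragraph since each such $P$ is torsion. (Here $\mathcal{F}$ is already an $R^if_*$, so one needs the Leray spectral sequence degeneration or simply to observe the above vanishing for all $j$ at once, which I handle by noting $H^i(A,R^jf_*(\cdots)\otimes H\otimes P)=0$ for $i>0$, all $j$, $P$ torsion, which is precisely Theorem \ref{f-thmD}.) Therefore $\mathcal{F}=R^if_*(\omega_X\otimes F\otimes\mathcal{J}(h))$ satisfies the isogeny criterion and is a GV-sheaf.

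The main obstacle is matching conventions precisely: verifying that the cohomological criterion for GV-sheaves I invoke (the isogeny/twist-by-torsion form) is literally equivalent to Definition \ref{f-def1.8}, and checking that the pushforward formula $\mu_{\ell*}\mu_\ell^*H\cong\bigoplus_{P\in\widehat A[\ell]}H\otimes P$ holds with the correct torsion group (so that all the twists appearing are genuinely torsion points of $\operatorname{Pic}^0(A)$, hence eligible for Theorem \ref{f-thmD}). Once those bookkeeping points are settled, the argument is a formal consequence of Theorem \ref{f-thmD} and the projection formula; no further analysis is needed. I would also remark that this shows the GV property is a fairly soft consequence, the analytic input being entirely concentrated in Theorem \ref{f-thmD}.
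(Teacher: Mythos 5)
Your reduction to Theorem \ref{f-thmD} is the right overall strategy, and the analytic content is indeed concentrated there, but the cohomological criterion for GV-sheaves that you invoke is misstated in a way that breaks the argument. The criterion of Hacon recorded in \cite[Theorem 25.5]{schnell} requires the vanishing of $H^i(B, p^*\mathcal F\otimes H)$ for isogenies $p\colon B\to A$ and ample $H$ on $B$ --- that is, one must pull back the \emph{sheaf} $\mathcal F$ to the isogenous abelian variety and then twist by an ample line bundle there. Your version, ``$\mathcal F$ is GV iff $H^i(A,\mathcal F\otimes \mu_\ell^*H)=0$ for $i>0$ and $\ell$ sufficiently divisible,'' keeps $\mathcal F$ on $A$ and only twists by $\mu_\ell^*H$; this is a strictly weaker condition and does not characterize GV-sheaves. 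For instance, $\mathcal F=H_0^{-1}$ with $H_0$ ample satisfies your condition (since $H_0^{-1}\otimes\mu_\ell^*H$ is ample for $\ell\gg 0$, all higher cohomology vanishes), yet $V^g(\mathcal F)=\Pic^0(A)$ by Serre duality, so $\mathcal F$ is not GV. The subsequent manipulation compounds the problem: by the projection formula $(\mu_\ell)_*(\mathcal F\otimes\mu_\ell^*H)\cong\bigl((\mu_\ell)_*\mathcal F\bigr)\otimes H$, \emph{not} $\mathcal F\otimes(\mu_\ell)_*\mu_\ell^*H$, so $H^i(A,\mathcal F\otimes\mu_\ell^*H)$ does not decompose as $\bigoplus_{P}H^i(A,\mathcal F\otimes H\otimes P)$ over torsion points $P$. (And the resulting ``torsion-twist'' condition $H^i(A,\mathcal F\otimes H\otimes P)=0$ is defeated by the same counterexample with $H=H_0^{\otimes 2}$.)

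With the correct criterion, the step you are missing is a base change: for a finite \'etale $p\colon B\to A$ one forms $Z:=B\times_AX$ with projections $q\colon Z\to X$ and $g\colon Z\to B$; since $q$ is finite \'etale, $q^*\omega_X=\omega_Z$ and $q^*\mathcal J(h)=\mathcal J(q^*h)$, and flat base change gives $p^*R^jf_*(\omega_X\otimes F\otimes\mathcal J(h))\simeq R^jg_*(\omega_Z\otimes q^*F\otimes\mathcal J(q^*h))$. Applying Theorem \ref{f-thmD} to $g\colon Z\to B$ with $N=g^*H$ then yields $H^i(B,\,p^*\mathcal F^j\otimes H)=0$ for $i>0$, which is exactly the hypothesis of \cite[Theorem 25.5]{schnell}. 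This is the paper's Lemma \ref{f-lem4.1}; your proposal needs to be repaired along these lines, as the isogeny genuinely cannot be avoided by working only on $A$.
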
 

\begin{defn}[{GV-sheaves in the sense of Pareschi and 
Popa:~\cite{pp}}]\label{f-def1.8} 
Let $A$ be an Abelian variety. 
A coherent sheaf $\mathcal F$ on $A$ is said to be a GV-sheaf 
if 
$$
\mathrm{codim}_{\Pic^0(A)}\{ L\in \Pic^0(A) \, | 
\, H^i(A, \mathcal F\otimes L)\ne 0\} \geq i 
$$ 
for every $i$. 
\end{defn}

The final one is a generalization of the generic vanishing theorem 
(see \cite{green-lazarsfeld}, \cite{hacon}, \cite{pp}). 
The formulation of Theorem \ref{f-thmF} is closer to 
\cite{hacon} and \cite{pp} than to the original 
generic vanishing theorem by Green and Lazarsfeld 
in \cite{green-lazarsfeld}. 

\begin{theorema}[Generic vanishing theorem]\label{f-thmF}
Let $f\colon X\to A$ be a morphism from a 
compact K\"ahler manifold 
$X$ to an Abelian variety $A$. 
Let $F$ be a holomorphic line bundle on $X$ and let 
$h$ be a singular Hermitian metric on $F$ such that 
$\sqrt{-1}\Theta_h (F)\geq 0$. 
Then 
\begin{equation*}
\begin{split}
\mathrm{codim}_{\Pic^0(A)}\{L\in \Pic^0(A) \, 
|\, H^i(X, K_X\otimes F\otimes \mathcal J(h)\otimes 
f^*L)\ne 0\} \geq i -(\dim X -\dim f(X)) 
\end{split}
\end{equation*}
for every $i\geq 0$, where 
$K_X$ is the 
canonical bundle of $X$ and 
$\mathcal J(h)$ is the multiplier ideal 
sheaf of $h$. 
\end{theorema}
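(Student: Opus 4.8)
The plan is to deduce Theorem \ref{f-thmF} from the Koll\'ar-type vanishing theorem (Theorem \ref{f-thmD}) together with the torsion-freeness (Theorem \ref{f-thmC}), by running the classical Hacon--Pareschi--Popa machinery for the pushforward sheaves $\mathcal F^j := R^jf_*(\omega_X\otimes F\otimes \mathcal J(h))$ on the Abelian variety $A$. The first step is to reduce to the case $f(X)=A$ by Stein factorization: write $f = g\circ f'$ where $f'\colon X\to Z$ has connected fibers onto a normal projective variety $Z$ (here I would need a K\"ahler-to-projective version, which is available since $Z$ is finite over $A$ hence projective) and $g\colon Z\to A$ is generically finite onto its image. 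By the Leray spectral sequence and the fact that $R^qg_*$ of a sheaf on $Z$ can only be nonzero for $q\le \dim Z-\dim g(Z)=0$ when $g$ is finite, one replaces $X$ by $Z$ and reduces the quantity $\dim X-\dim f(X)$ appropriately; a little care is needed because $f'$ may not be smooth, but the defect $\dim X-\dim f(X)$ is precisely the maximal fiber dimension of $f$, which bounds the range of $j$ with $\mathcal F^j\ne 0$.

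The heart of the argument is the equivalence, established by Hacon and by Pareschi--Popa, between the GV-condition for a coherent sheaf on $A$ and the vanishing of certain cohomology of its Fourier--Mukai transform, equivalently the statement that $R^ip_{A*}(p_X^*\mathcal F\otimes P)$ (with $P$ the Poincar\'e bundle on $A\times\widehat A$) satisfies codimension bounds. Concretely, by the base-change and the Leray spectral sequence for $f\times\mathrm{id}$, for $L\in\Pic^0(A)$ one has
\begin{equation*}
H^i(X,\omega_X\otimes F\otimes\mathcal J(h)\otimes f^*L)\ \Longleftarrow\ \bigoplus_{p+q=i}H^p(A,\mathcal F^q\otimes L),
\end{equation*}
so it suffices to bound, for each $q$, the codimension of $\{L : H^p(A,\mathcal F^q\otimes L)\ne 0\}$ from below by $p$, i.e. to show each $\mathcal F^q$ is a GV-sheaf on $A$: this is exactly Theorem \ref{f-thmE}. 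The passage from "each $\mathcal F^q$ is GV" to the stated inequality with the correction term $-(\dim X-\dim f(X))$ then comes from the spectral sequence together with the vanishing $\mathcal F^q=0$ for $q>\dim X-\dim f(X)$: the worst contribution to $H^i$ sits in the top nonzero $q$, shifting the effective cohomological degree on $A$ down by that amount. I would spell this out as a short diagram chase, noting that a nonzero class in $H^i(X,-\otimes f^*L)$ forces a nonzero $H^p(A,\mathcal F^q\otimes L)$ with $p\ge i-(\dim X-\dim f(X))$, and the GV-property of $\mathcal F^q$ gives $\mathrm{codim}\ge p\ge i-(\dim X-\dim f(X))$.

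For Theorem \ref{f-thmE} itself (which I would prove first, as a lemma), the standard criterion I would invoke is: a coherent sheaf $\mathcal F$ on $A$ is GV if and only if for one (equivalently every) sufficiently positive ample line bundle $H$ on $\widehat A$, one has $H^i(A,\mathcal F\otimes\widehat{H^{-1}})=0$ for all $i>0$, where $\widehat{H^{-1}}$ is the locally free Fourier--Mukai transform of $H^{-1}$ (an "M-regularity"/IT$_0$ reformulation, cf. Hacon and Pareschi--Popa). Up to an isogeny $\phi_H\colon A\to\widehat A$, pulling back reduces this to showing $H^i(A', (\text{pullback of }\mathcal F)\otimes H')=0$ for $i>0$ and $H'$ ample, and via the base-changed morphism $X\times_A A'\to A'$ this is precisely an instance of Theorem \ref{f-thmD} applied with $N$ the pullback of an ample bundle. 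The main obstacle I anticipate is not the Abelian-variety formalism, which is classical, but the geometric technicalities around Stein factorization in the K\"ahler category and around base change by the isogeny: one must check that $X\times_A A'$ (or a resolution of it) is again compact K\"ahler, that $\omega$, $F$, $h$ and $\mathcal J(h)$ pull back compatibly (so that the hypotheses of Theorems \ref{f-thmC} and \ref{f-thmD} are preserved — here the fact that multiplier ideals commute with smooth base change, i.e. pullback by the \'etale map coming from the isogeny, is what makes it work), and that the fiber-dimension bookkeeping is done consistently. Once these compatibilities are in place, the result drops out of Theorems \ref{f-thmD} and \ref{f-thmE} with only formal homological algebra.
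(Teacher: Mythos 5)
Your proposal follows essentially the same route as the paper: the Leray spectral sequence $E_2^{pq}=H^p(A,\mathcal F^q\otimes L)\Rightarrow H^{p+q}(X,\omega_X\otimes F\otimes \mathcal J(h)\otimes f^*L)$, combined with Theorem \ref{f-thmE} (each $\mathcal F^q$ is a GV-sheaf, proved by pulling back along a finite \'etale isogeny base change and applying Theorem \ref{f-thmD} --- exactly the paper's argument via the criterion in Schnell's notes) and the vanishing $\mathcal F^q=0$ for $q>\dim X-\dim f(X)$, yields the stated codimension bound. Two minor corrections: the Stein factorization reduction at the start is unnecessary (the paper works directly with $f$), and the vanishing of $\mathcal F^q$ for $q>\dim X-\dim f(X)$ should be justified by Theorem \ref{f-thmC} ($\mathcal F^q$ is torsion-free and generically zero in that range), not by the claim that $\dim X-\dim f(X)$ equals the \emph{maximal} fiber dimension of $f$, which can fail when fibers jump.
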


The main results explained above 
are closely related to each other. 
The following proposition, which is 
also one of the main contributions in this paper, 
shows several relations among them. 
From Proposition \ref{f-prop1.9}, 
we see that it is sufficient to prove Theorem \ref{f-thmA}.
The proof of Proposition \ref{f-prop1.9} will be given 
in Section \ref{x-sec4}. 

\begin{prop}\label{f-prop1.9}
We have the following relations among the 
above theorems. 
\begin{itemize}
\item[(i)] Theorem \ref{f-thmA} implies Theorem \ref{f-thmB}. 
\item[(ii)] Theorem \ref{f-thmB} is equivalent to 
Theorem \ref{f-thmC} and Theorem \ref{f-thmD}. 
\item[(iii)] Theorem \ref{f-thmD} implies Theorem \ref{f-thmE}. 
\item[(iv)] Theorem \ref{f-thmC} and Theorem 
\ref{f-thmE} imply Theorem \ref{f-thmF}. 
\end{itemize}
\end{prop}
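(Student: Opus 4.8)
The plan is to establish the four implications in turn, using only Theorem \ref{f-thmA} as input together with standard reductions (resolution of singularities, Leray spectral sequences, Castelnuovo--Mumford regularity, and the Fourier--Mukai machinery on Abelian varieties). For (i), I would deduce Theorem \ref{f-thmB} from Theorem \ref{f-thmA} by first reducing to the case $b=1$ as in Remark \ref{f-rem1.5}(2), so that $N_2\simeq N_1^{\otimes a}$. Then I set $F':=F\otimes N_1$, equip it with $h':=h\otimes h_{N_1}$ where $h_{N_1}$ is a smooth semipositive metric coming from semiampleness of $N_1$ (so $\mathcal J(h')=\mathcal J(h)$ since $h_{N_1}$ is smooth), set $M:=N_2$ with the smooth semipositive metric $h_M:=h_{N_1}^{\otimes a}$, and observe $\sqrt{-1}\Theta_{h'}(F')=\sqrt{-1}\Theta_h(F)+\sqrt{-1}\Theta_{h_{N_1}}(N_1)\geq 0$ while $\sqrt{-1}(\Theta_{h'}(F')-\tfrac1a\Theta_{h_M}(M))=\sqrt{-1}\Theta_h(F)\geq 0$; thus Theorem \ref{f-thmA} with $t=1/a$ applies and gives exactly the injectivity asserted in Theorem \ref{f-thmB}.

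For (ii) I would prove the equivalence in the form: Theorem \ref{f-thmB} $\Rightarrow$ Theorem \ref{f-thmC} and Theorem \ref{f-thmD}, and conversely Theorem \ref{f-thmD} $\Rightarrow$ Theorem \ref{f-thmB} (the reverse direction being essentially formal once $Y$ is a point, or via a general hyperplane section argument). The forward direction is the classical Koll\'ar-style argument: to show $R^if_*(\omega_X\otimes F\otimes\mathcal J(h))$ is torsion-free, one takes a general very ample $H$ on $Y$ with a general $D\in|H|$, pulls back, and uses the injectivity of Theorem \ref{f-thmB} (with $N_1=f^*H$, $N_2=f^*H^{\otimes a}$ for suitable $a$) applied to the section cutting out $D$; a local-to-global/Leray spectral sequence argument then forces the torsion submodule, which is supported on $f^{-1}(D)$-type loci, to vanish. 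For the vanishing Theorem \ref{f-thmD}, I would use the standard trick of twisting by a high power of $H$, applying Serre vanishing and the injectivity of Theorem \ref{f-thmB} to a ladder of sections, and conclude $H^i(Y,R^jf_*(\cdots\otimes N))=0$ for $i>0$; here the hypothesis $N^{\otimes a}\simeq f^*H^{\otimes b}$ is exactly what lets us fit $N$ into the scheme of Theorem \ref{f-thmB} with semiample $N_1=N$ (semiample because a power is a pullback of ample) and $N_2=f^*H$.

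For (iii), Theorem \ref{f-thmE} follows from Theorem \ref{f-thmD} by the now-standard criterion (as in Hacon, Pareschi--Popa, Schnell): a coherent sheaf $\mathcal G$ on an Abelian variety $A$ is a GV-sheaf provided that for every sufficiently ample line bundle $H$ on the dual $\widehat A$ (equivalently, after an isogeny, every ample $H$) one has $H^i(A,\mathcal G\otimes\widehat H)=0$ for $i>0$, where $\widehat H$ is the appropriate Fourier--Mukai transform; applying Theorem \ref{f-thmD} to the composition $X\to A\to A$ (or directly to $f$) with $N:=f^*($ample line bundle on $A)$ supplies exactly this vanishing after chasing through the Fourier--Mukai equivalence. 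Finally, for (iv), I would run the Green--Lazarsfeld/Hacon-type argument: apply Theorem \ref{f-thmC} to the Stein factorization or to $f$ itself to relate $H^i(X,\omega_X\otimes F\otimes\mathcal J(h)\otimes f^*L)$ to cohomology of the GV-sheaves $R^jf_*(\omega_X\otimes F\otimes\mathcal J(h))$ on $A$ twisted by $L\in\Pic^0(A)$ via the (degenerate, by torsion-freeness) Leray spectral sequence, then feed the GV-property from Theorem \ref{f-thmE} into the codimension count; the correction term $\dim X-\dim f(X)$ is precisely the range of $j$ for which $R^jf_*\neq0$.

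The main obstacle I expect is part (ii), specifically the torsion-freeness implication and the reverse implication Theorem \ref{f-thmD} $\Rightarrow$ Theorem \ref{f-thmB}: one must be careful that the Bertini-type behavior of $\mathcal J(h)$ under restriction to general members of $|H|$ is controlled (this is presumably where the Bertini-type theorem advertised in the abstract enters), since $h$ has arbitrary—possibly nonalgebraic—singularities, so the usual ``restrict to a general hyperplane'' step is not automatic and needs the multiplier-ideal restriction statement to make the inductive/dévissage argument go through. The other implications are comparatively formal transcriptions of known homological and Fourier--Mukai arguments.
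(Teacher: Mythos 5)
Your overall architecture coincides with the paper's: (i) by absorbing a smooth semipositive metric of $N_1$ into $h$ and feeding the curvature inequality into Theorem \ref{f-thmA}; the forward half of (ii) by the Koll\'ar-style Serre-vanishing/Leray arguments; (iii) via the Hacon--Pareschi--Popa--Schnell criterion (the paper uses the form with finite \'etale covers $p\colon B\to A$ and flat base change on $B\times_AX$, which is equivalent to your Fourier--Mukai formulation after an isogeny); and (iv) via the Leray spectral sequence together with the vanishing of $R^jf_*$ for $j>\dim X-\dim f(X)$. You also correctly identify that the genuinely non-formal input is the Bertini-type restriction statement for $\mathcal J(h)$ (Theorem \ref{f-thm1.10} and Corollary \ref{f-cor3.13}), which is exactly what the paper uses to run the induction on $\dim Y$ in the proof of Theorem \ref{f-thmD}.

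There is, however, a genuine gap in your treatment of the reverse implication in (ii). You propose to prove ``Theorem \ref{f-thmD} $\Rightarrow$ Theorem \ref{f-thmB}'' and assert it is ``essentially formal once $Y$ is a point.'' This cannot work: taking $Y$ to be a point makes Theorem \ref{f-thmD} vacuous, while $N_1$, $N_2$ and the section $s$ live on $X$, so no information about the multiplication map is recovered. Moreover, Theorem \ref{f-thmD} alone does not suffice; the paper's argument uses \emph{both} \ref{f-thmC} and \ref{f-thmD}. One takes the morphism $f:=\Phi_{|N_2|}\colon X\to Y$ (after replacing $N_2$ by a power so that $|N_2|$ is free), writes $N_2\simeq f^*H$ and $s=f^*t$, uses Theorem \ref{f-thmD} (applicable to $N=N_1$ since $N_1^{\otimes a}\simeq f^*H$) to identify $H^i(X,\omega_X\otimes F\otimes\mathcal J(h)\otimes N_1)$ with $H^0(Y,R^if_*(\cdots))$ via Leray, and then uses the torsion-freeness of Theorem \ref{f-thmC} to see that multiplication by $t$ is injective on these $H^0$'s; without torsion-freeness the kernel of $\otimes t$ on global sections of the direct image need not vanish. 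A smaller inaccuracy occurs in (iv): torsion-freeness does not imply degeneration of the Leray spectral sequence (that would be Koll\'ar's splitting, which is not among the stated results), but degeneration is not needed --- the crude estimate $E_2^{pq}=H^p(A,\mathcal F^q\otimes L)$ with $q\le\dim X-\dim f(X)$ already yields the codimension bound.
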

 
A key ingredient of Proposition \ref{f-prop1.9} is 
the following theorem, 
which can be seen as a Bertini-type 
theorem on the restriction of multiplier 
ideal sheaves to general members of free linear systems. 
Theorem \ref{f-thm1.10} enables us to use the 
inductive argument on dimension. 
We remark that $\mathcal{G}$ in Theorem \ref{f-thm1.10} 
is not always an intersection of countably many Zariski open sets 
(see Example \ref{x-ex3.10}). 
The proof of Theorem \ref{f-thm1.10}, 
which is quite technical, will be given in Section \ref{x-sec3}

\begin{thm}[Density of good divisors:~Theorem \ref{x-thm3.6}]
\label{f-thm1.10} 
Let $X$ be a compact complex manifold, 
let $\Lambda$ be a free linear system on $X$ with $\dim \Lambda \geq 1$, 
and let $\varphi$ be a quasi-plurisubharmonic function on $X$. 
We put 
\begin{equation*}
\mathcal G :=\{ H\in \Lambda \, |\,  
{\text{$H$ is smooth and $\mathcal J(\varphi|_H)=
\mathcal J(\varphi)|_H$}}\}. 
\end{equation*}
Then $\mathcal G$ is dense in $\Lambda$ 
in the classical topology, that is, the Euclidean topology. 
\end{thm}

Although the above formulation is sufficient for our applications, 
it is of independent interest to find a more precise formulation. 
The following problem, posed by S\'ebastien Boucksom, 
is reasonable from the viewpoint of 
Berndtsson's complex Prekopa theorem (see \cite{Be}). 

\begin{prob}
\label{f-prob1.11}
In Theorem \ref{f-thm1.10}, is the complement $\Lambda\setminus 
\mathcal G$ a pluripolar subset of $\Lambda$? 
\end{prob}

All the results explained above 
hold even if we replace $K_X$ with $K_X\otimes E$, 
where $E$ is any Nakano semipositive vector bundle on $X$. 
We will explain Theorem \ref{f-thm1.12} in Section \ref{f-sec6}. 

\begin{thm}[Twists by Nakano semipositive vector bundles]\label{f-thm1.12}
Let $E$ be a Nakano semipositive vector bundle on a compact 
K\"ahler manifold $X$. 
Then Theorems \ref{f-thmA}, \ref{f-thmB}, 
\ref{f-thmC}, \ref{f-thmD}, \ref{f-thmE}, \ref{f-thmF}, Theorem 
\ref{f-thm1.4}, Corollary \ref{f-cor1.7}, and 
Proposition \ref{f-prop1.9} hold even when $K_X$ is replaced 
with $K_X\otimes E$. 
\end{thm}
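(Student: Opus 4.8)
The plan is to revisit each proof in the paper and observe that the only property of $\omega_X$ that is ever used is that it plays the role of $K_X$ in the Bochner--Kodaira--Nakano identity, where one works with $(n,q)$-forms valued in the line bundle and the curvature contribution comes from the line bundle alone. When one replaces $\omega_X$ by $\omega_X\otimes E$, an $\omega_X\otimes E$-valued $(0,q)$-form becomes an $E$-valued $(n,q)$-form, and the Nakano semipositivity of $E$ contributes a nonnegative term to the curvature operator in the Bochner--Kodaira--Nakano formula. Thus the strategy is purely a matter of checking that, line by line, every positivity estimate that was derived for the rank-one case remains valid after adding this extra nonnegative Nakano term. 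I would proceed by proving the analogue of Theorem~\ref{f-thmA} first, and then observing that Proposition~\ref{f-prop1.9} and the deductions of Theorems~\ref{f-thmB}--\ref{f-thmF} from it are formal consequences that carry over verbatim once the vector bundle $E$ is carried along as a passive twist.

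First I would set up the harmonic theory on the noncompact K\"ahler manifold $X\setminus Z$ (where $Z$ is the relevant analytic set, e.g.\ the zero locus of $s$ together with the poles of $h$) for $(n,q)$-forms with values in $E\otimes F$, equipped with a suitable complete K\"ahler metric and the singular metric $h$ on $F$ (regularized as in the original proof). The key computation is the twisted Bochner--Kodaira--Nakano inequality
\begin{equation*}
\|\dbar u\|^2+\|\dbar^* u\|^2 \geq \langle\langle [\sqrt{-1}\Theta_{h_E}(E)\otimes \mathrm{Id}_F + \mathrm{Id}_E\otimes\sqrt{-1}\Theta_h(F),\Lambda_\omega]u,u\rangle\rangle
\end{equation*}
for $E\otimes F$-valued $(n,q)$-forms $u$, where $h_E$ is a smooth Nakano semipositive metric on $E$. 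Since $E$ is Nakano semipositive, the operator $[\sqrt{-1}\Theta_{h_E}(E),\Lambda_\omega]$ acting on $(n,q)$-forms is semipositive, so it may simply be discarded from the lower bound, leaving exactly the same estimate in terms of $\sqrt{-1}\Theta_h(F)$ that appears in the original argument. From this point the proof of Theorem~\ref{f-thmA} in the text—extracting harmonic representatives, the $L^2$-$\dbar$-solvability to produce the section witnessing injectivity, and the passage to the limit as the regularization parameter tends to zero—runs word for word, with $E$ carried along as an inert tensor factor and the multiplier ideal $\mathcal J(h)$ unchanged since it depends only on $h$.

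For the remaining results I would note that Theorem~\ref{f-thm1.10} does not involve $\omega_X$ at all, so it needs no modification, and that the proof of Proposition~\ref{f-prop1.9}—the reductions of Theorems~\ref{f-thmB}, \ref{f-thmC}, \ref{f-thmD}, \ref{f-thmE}, \ref{f-thmF} and Corollary~\ref{f-cor1.7} and Theorem~\ref{f-thm1.4} from Theorem~\ref{f-thmA} and Theorem~\ref{f-thm1.10}—is a sequence of standard operations (Leray spectral sequences, Castelnuovo--Mumford regularity, pullback to \'etale covers of the Abelian variety, restriction to general members of free linear systems) each of which commutes with tensoring by the fixed vector bundle $E$; in particular $f^*L\otimes E$ for $L\in\Pic^0(A)$ is still a twist by a Nakano semipositive bundle since $f^*L$ carries a flat metric. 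One subtlety worth flagging: when one restricts to a general smooth member $H\in\Lambda$ in the inductive step, one must restrict $E$ to $H$ as well, and $E|_H$ is again Nakano semipositive (a subbundle restriction preserves Nakano semipositivity of the ambient, and here it is just a restriction of the base, not of the bundle, so $(E|_H,h_E|_H)$ is automatically Nakano semipositive).

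The main obstacle I anticipate is not conceptual but bookkeeping: one must verify that the \emph{completeness} and \emph{regularization} devices used in the original proof of Theorem~\ref{f-thmA}—in particular the exhaustion by subsets where $h$ is replaced by a decreasing sequence of smooth metrics $h_\nu$, and the control of the ideal sheaves $\mathcal J(h_\nu)$—interact correctly with the Nakano term coming from $E$. Since $E$ is smooth and $h_E$ can be chosen fixed once and for all, the Nakano term is uniformly bounded below by $0$ throughout the regularization, so no new difficulty arises; nevertheless one should check that the harmonic spaces $\mathcal H^{n,q}(E\otimes F)$ and the de Rham--Weil isomorphism $H^q(X,\omega_X\otimes E\otimes F\otimes\mathcal J(h))\cong \mathcal H^{n,q}$ used implicitly in the argument hold in the $E$-twisted setting—this is classical (e.g.\ it follows from the hard Lefschetz-type machinery for $(n,q)$-forms with values in a holomorphic vector bundle, combined with the coherence of $\mathcal J(h)$), but it is the one place where ``$\omega_X\otimes E$'' must be handled as a genuine vector bundle rather than formally. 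Once this is in place, the statement follows, and I would present the whole argument as a remark that the reader can verify by inspecting the earlier proofs with $E$ inserted, spelling out only the twisted Bochner--Kodaira--Nakano inequality in detail.
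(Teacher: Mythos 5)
Your proposal is correct and takes essentially the same route as the paper: the authors likewise prove the $E$-twisted version of Theorem \ref{f-thmA} by replacing $(F,h_{\e})$ with $(E\otimes F, h_Eh_{\e})$ and using Nakano semipositivity to absorb the extra curvature term in the Bochner--Kodaira--Nakano estimate, and they dispose of Proposition \ref{f-prop1.9} and the remaining statements exactly as you do, via the observations that $E|_H$ and \'etale pullbacks of $E$ remain Nakano semipositive. The points you flag as needing verification (the De Rham--Weil isomorphism, the Fr\'echet-space/orthogonal-decomposition setup, and the $L^2$-solvability lemma in the vector-bundle setting) are precisely the ones the paper checks.
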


In this paper, we assume that all the varieties and manifolds are 
compact and connected for simplicity. 
We summarize the contents of this paper. 
In Section \ref{f-sec2}, we recall some basic definitions 
and collect several preliminary lemmas. 
Section \ref{x-sec3} is devoted to the 
proof of Theorem \ref{f-thm1.10}. 
Theorem \ref{f-thm1.10} plays a crucial role in the proof of 
Proposition \ref{f-prop1.9}. 
In Section \ref{x-sec4}, we prove Proposition \ref{f-prop1.9} and  
Corollary \ref{f-cor1.7}, and explain how to reduce 
Theorem \ref{f-thm1.4} to Theorem \ref{f-thmA}. 
By these results, we see that 
all we have to do is to establish Theorem 
\ref{f-thmA}. 
In Section \ref{f-sec5}, we give a detailed proof of Theorem \ref{f-thmA}. 
In the final section:~Section \ref{f-sec6}, 
we explain how to modify the arguments used before 
for the proof of Theorem \ref{f-thm1.12}. 

After the authors put a preprint version of this paper on arXiv, 
some further generalizations of Theorem \ref{f-thmA} have been studied 
in \cite{matsumura5}, \cite{CDM}, \cite{ZZ}, 
and a relative version of Theorem \ref{f-thm1.10} has been established in 
\cite{fujino-relative}, by developing the techniques in this paper.  
See \cite{takegoshi}, \cite{fujino-crelle}, \cite{matsumura5}, \cite{CDM}, 
\cite{fujino-relative} for some injectivity, torsion-free, and vanishing theorems 
for noncompact manifolds. 

\begin{ack}
The authors would like to thank Professor Toshiyuki Sugawa 
for giving them the reference on Example \ref{x-ex3.10} 
and Professor Taro Fujisawa for his warm encouragement. 
Furthermore, they are deeply grateful to Professor S\'ebastien Boucksom 
for kindly suggesting Problem \ref{f-prob1.11}. 
The first author thanks Takahiro Shibata for discussion. 
He was partially supported by 
Grant-in-Aid for Young Scientists (A) 24684002 from JSPS and 
by JSPS KAKENHI Grant 
Numbers JP16H03925, JP16H06337, 
JP19H01787, JP20H00111, JP21H00974. 
The second author was partially supported by 
Grand-in-Aid for Young Scientists (A) $\sharp$17H04821, 
Grand-in-Aid for Scientific Research (B) $\sharp$ 21H00976, 
and Fostering Joint International Research (A) $\sharp$ 19KK0342 from JSPS. 
\end{ack}

\section{Preliminaries}\label{f-sec2}

We briefly review the definition of singular Hermitian metrics, 
(quasi-)plurisubharmonic functions, and Nadel's multiplier ideal sheaves. 
See \cite{demailly} for the details. 

\begin{defn}[Singular Hermitian metrics and curvatures]\label{f-def2.1} 
Let $F$ be a holomorphic line bundle
on a complex manifold $X$. 
A {\textit{singular Hermitian metric}}
on $F$ is a metric $h$ which is given in 
every trivialization $\theta\colon F|_{\Omega}\simeq \Omega\times 
\mathbb C$ 
by 
\begin{equation*}
| \xi |_{h} =|\theta(\xi)|e^{-\varphi} \text{ on } \Omega, 
\end{equation*}
where $\xi$ is a section of $F$ on $\Omega$ and 
$\varphi \in L^1_{\mathrm{loc}}(\Omega)$ 
is an arbitrary function. 
Here $L^1_{\mathrm{loc}}(\Omega)$ is the space
of locally integrable functions on $\Omega$. 
We usually call $\varphi$ 
the weight function of the
metric with respect to the trivialization $\theta$. 
The curvature of a singular Hermitian metric $h$ is 
defined by 
\begin{equation*}
\sqrt{-1}\Theta_{h}(F):=2\deldel \varphi, 
\end{equation*}
where $\varphi$ is a weight function and 
$\deldel \varphi$ is taken in the sense of currents. 
It is easy to see that the right-hand side does not depend on 
the choice of trivializations.
\end{defn}

The notion of multiplier ideal sheaves introduced by Nadel 
plays an important role in the recent developments of 
complex geometry and algebraic geometry. 

\begin{defn}[(Quasi-)plurisubharmonic functions and 
multiplier ideal sheaves]\label{f-def2.2} 
A function $u\colon \Omega\to [-\infty, \infty)$ defined on an open set 
$\Omega\subset \mathbb C^n$ is said to be {\textit{plurisubharmonic}} if 
\begin{itemize}
\item[$\bullet$] $u$ is upper semicontinuous, and 
\item[$\bullet$] for every complex line $L\subset \mathbb C^n$, 
the restriction $u|_{\Omega\cap L}$ to $L$ 
is subharmonic on $\Omega\cap L$, that is, 
for every $a\in \Omega$ and $\xi \in \mathbb C^n$ 
satisfying $|\xi|<d(a, \Omega^c)$, the function $u$ satisfies 
the mean inequality 
\begin{equation*}
u(a)\leq \frac{1}{2\pi} \int ^{2\pi}_{0} u(a+e^{i\theta}\xi)\,d\theta. 
\end{equation*}
\end{itemize} 

Let $X$ be a complex manifold. 
A function $\varphi \colon X\to [-\infty, \infty)$ is said to be 
{\textit{plurisubharmonic on $X$}} if 
there exists an open cover $\{U_{i}\}_{i \in I}$ of $X$ such that 
$\varphi|_{U_i}$ is plurisubharmonic on $U_i$ 
($\subset \mathbb C^n$) for every $i$. 
We can easily see that 
this definition is independent of the choice 
of open covers. 
A {\textit{quasi-plurisubharmonic}} function is a function $\varphi$ which 
is locally equal to the sum of a plurisubharmonic 
function and of a smooth function. 
If $\varphi$ is a quasi-plurisubharmonic function on a complex manifold $X$, 
then the multiplier ideal sheaf 
$\mathcal J(\varphi)\subset \mathcal O_X$ is 
defined by 
\begin{equation*}
\Gamma (U, \mathcal J(\varphi))
:=\{f\in \mathcal O_X(U)\, |\, |f|^2e^{-2\varphi}\in 
L^1_{\mathrm{loc}}(U) \}
\end{equation*}
for every open set $U\subset X$. 
Then it is known that 
$\mathcal J(\varphi)$ is a coherent ideal sheaf 
(see, for example, \cite[(5.7) Lemma]{demailly}). 
Let $S$ be a complex submanifold 
of $X$. Then the restriction $\mathcal J(\varphi)|_S$ 
of the multiplier ideal sheaf $\mathcal J(\varphi)$ to $S$ is 
defined by the image of $\mathcal J(\varphi)$ under 
the natural surjective 
morphism $\mathcal O_X\to \mathcal O_S$, 
that is, 
\begin{equation*}
\mathcal J(\varphi)|_S=\mathcal J(\varphi) / \mathcal J(\varphi)
\cap\mathcal I_S, 
\end{equation*} 
where $\mathcal I_S$ is the defining ideal sheaf of 
$S$ on $X$. 
We note that the restriction $\mathcal J(\varphi)|_S$ 
does not always coincide with $\mathcal J(\varphi)\otimes 
\mathcal O_S=\mathcal J(\varphi)/\mathcal J(\varphi) \cdot \mathcal I_S$. 
\end{defn}

We have already used $\mathcal J(h)$ 
in theorems in Section \ref{f-sec1}. 

\begin{defn}\label{f-def2.3}
Let $F$ be a holomorphic line bundle on a complex manifold $X$ and 
let $h$ be a singular Hermitian metric on $F$.  
We assume $\sqrt{-1}\Theta_h(F)\geq \gamma$ 
for some smooth $(1, 1)$-form $\gamma$ on $X$.
We fix a smooth Hermitian metric $h_{\infty}$ on $F$. 
Then we can write $h=h_{\infty} e^{-2\psi}$ for some 
$\psi \in L^1_{\mathrm{loc}}(X)$.  
Then 
$\psi$ coincides with a quasi-plurisubharmonic 
function $\varphi$ on $X$ almost everywhere. 
We define the multiplier ideal sheaf 
$\mathcal J(h)$ of $h$ by $\mathcal J(h):=\mathcal J(\varphi)$. 
\end{defn}

We close this section with the following lemmas, 
which will be used in the proof of Theorem \ref{f-thmA} 
in Section \ref{f-sec5}. 

\begin{lem}[{\cite[Proposition 1.1]{ohsawa-norm}}]\label{f-lem2.4}
Let $\omega$ and  $\ome$ be positive $(1,1)$-forms on an 
$n$-dimensional complex manifold with $\ome \geq \omega$.  
If $u$ is an $(n,q)$-form, 
then $|u|^{2}_{\ome}\, dV_{\ome} \leq |u|^{2}_{\omega}\,  dV_{\omega} $. 
Furthermore, if $u$ is an $(n,0)$-form, 
then $|u|^{2}_{\ome}\,  dV_{\ome} = |u|^{2}_{\omega}\,  dV_{\omega}$.
Here $|u|_{\omega}$ $($resp.~$|u|_{\ome}$$)$ is the pointwise norm of $u$ 
with respect to $\omega$ $($resp.~$\ome$$)$ and 
$dV_{\omega}$ $($resp.~$dV_{\ome}$$)$ is the volume form defined by 
$dV_{\omega}:=\omega^{n}/n!$ $($resp.~$dV_{\ome}:=\ome^{n}/n!$$)$. 
\end{lem}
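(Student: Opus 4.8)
\textbf{Proof proposal for Lemma \ref{f-lem2.4}.}

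The plan is to reduce the statement to a pointwise linear-algebra computation at a single point $x$ of the manifold, and then to diagonalize the two Hermitian forms simultaneously. Fix $x$ and work in local holomorphic coordinates $(z_1,\dots,z_n)$ centered at $x$. Since $\omega$ and $\ome$ are positive $(1,1)$-forms with $\ome\geq\omega$, after a $\mathbb{C}$-linear change of coordinates I may assume that at $x$ we have $\omega=\sqrt{-1}\sum_{j=1}^n dz_j\wedge d\bar z_j$ and $\ome=\sqrt{-1}\sum_{j=1}^n \lambda_j\, dz_j\wedge d\bar z_j$ with all $\lambda_j\geq 1$ (this is the standard simultaneous diagonalization; the inequality $\ome\geq\omega$ forces $\lambda_j\geq 1$ for every $j$). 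Write the given $(n,q)$-form as $u=\sum_{|K|=q} u_K\, dz_1\wedge\cdots\wedge dz_n\wedge d\bar z_K$, where $K$ ranges over increasing multi-indices of length $q$ and $d\bar z_K=d\bar z_{k_1}\wedge\cdots\wedge d\bar z_{k_q}$.

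Next I would write down the two pointwise norms and the two volume forms explicitly in these coordinates. One has $dV_\omega=\omega^n/n!=\sqrt{-1}^{\,n^2} dz_1\wedge d\bar z_1\wedge\cdots$ (the standard Euclidean volume) and $dV_{\ome}=\ome^n/n!=(\lambda_1\cdots\lambda_n)\,dV_\omega$. For the norms, the $(n,0)$-part $dz_1\wedge\cdots\wedge dz_n$ has squared norm $1$ with respect to $\omega$ and $(\lambda_1\cdots\lambda_n)^{-1}$ with respect to $\ome$, while the conjugate covector $d\bar z_{k}$ has squared norm $1$ with respect to $\omega$ and $\lambda_k^{-1}$ with respect to $\ome$. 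Hence $|u|_\omega^2=\sum_K |u_K|^2$ and $|u|_{\ome}^2=\sum_K |u_K|^2 (\lambda_1\cdots\lambda_n)^{-1}\prod_{k\in K}\lambda_k^{-1}$. Multiplying by the respective volume forms gives
\begin{equation*}
|u|_{\ome}^2\, dV_{\ome}=\Bigl(\sum_K |u_K|^2 \prod_{k\in K}\lambda_k^{-1}\Bigr) dV_\omega
\quad\text{and}\quad
|u|_{\omega}^2\, dV_{\omega}=\Bigl(\sum_K |u_K|^2\Bigr) dV_\omega .
\end{equation*}
Since each $\lambda_k\geq 1$, every factor $\prod_{k\in K}\lambda_k^{-1}\leq 1$, which yields the inequality $|u|_{\ome}^2\, dV_{\ome}\leq |u|_{\omega}^2\, dV_{\omega}$. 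When $q=0$ the product over the empty multi-index $K=\emptyset$ equals $1$, so both expressions coincide, giving the asserted equality for $(n,0)$-forms. Finally, since $x$ was arbitrary, the pointwise identities hold everywhere on the manifold.

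I do not anticipate a serious obstacle here; the only point requiring a little care is the bookkeeping of how the volume-form factor $\lambda_1\cdots\lambda_n$ and the covector norms $\prod_{k\in K}\lambda_k^{-1}$ combine, and the observation that it is precisely the presence of the full factor $(\lambda_1\cdots\lambda_n)^{-1}$ coming from the $(n,0)$-part that cancels the $(\lambda_1\cdots\lambda_n)$ in $dV_{\ome}/dV_\omega$ — this cancellation is what makes the $(n,0)$ case an exact equality and, in the general $(n,q)$ case, leaves behind only the factors $\lambda_k^{-1}\leq 1$ indexed by $K$. One should also note that the simultaneous diagonalization is legitimate pointwise regardless of any smoothness of the eigenvalues $\lambda_j$ as functions of $x$, since the argument is carried out at a fixed point.
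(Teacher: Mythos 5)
Your proof is correct: the paper itself omits the argument, stating only that the lemma ``follows from simple computations,'' and your pointwise simultaneous diagonalization with the cancellation of the factor $\lambda_1\cdots\lambda_n$ between $|u|^2_{\ome}$ and $dV_{\ome}$ is exactly the standard computation intended. The key observations --- that $\ome\geq\omega$ forces $\lambda_j\geq 1$, and that only the factors $\prod_{k\in K}\lambda_k^{-1}$ indexed by the antiholomorphic multi-index survive (so that the $(n,0)$ case is an equality) --- are all in place.
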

\begin{proof}
This lemma follows from simple computations. 
Thus, we omit the proof. 
\end{proof}

\begin{lem}\label{f-lem2.5}
Let $\varphi\colon \mathcal{H}_{1} \to \mathcal{H}_{2}$ be 
a bounded operator $($continuous linear map$)$ between Hilbert spaces
$\mathcal{H}_{1}, \mathcal{H}_{2}$. 
If $\{w_{k}\}_{k=1}^{\infty}$ weakly converges to $w$ in $\mathcal{H}_{1}$, 
then $\{\varphi(w_{k})\}_{k=1}^{\infty}$ weakly converges to $\varphi(w)$. 
\end{lem}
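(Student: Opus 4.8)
The plan is to reduce the assertion to the very definition of weak convergence by passing through the Hilbert-space adjoint of $\varphi$. Recall that the hypothesis $w_{k}\to w$ weakly in $\mathcal{H}_{1}$ means $\langle w_{k}, v\rangle_{\mathcal{H}_{1}}\to \langle w, v\rangle_{\mathcal{H}_{1}}$ for every $v\in \mathcal{H}_{1}$, while what we must establish is that $\langle \varphi(w_{k}), u\rangle_{\mathcal{H}_{2}}\to \langle \varphi(w), u\rangle_{\mathcal{H}_{2}}$ for every $u\in \mathcal{H}_{2}$.

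First I would invoke the existence of the adjoint operator $\varphi^{*}:\mathcal{H}_{2}\to \mathcal{H}_{1}$, characterized by $\langle \varphi(x), u\rangle_{\mathcal{H}_{2}}=\langle x, \varphi^{*}(u)\rangle_{\mathcal{H}_{1}}$ for all $x\in \mathcal{H}_{1}$ and all $u\in \mathcal{H}_{2}$. Its existence is a consequence of the Riesz representation theorem applied to the linear functional $x\mapsto \langle \varphi(x), u\rangle_{\mathcal{H}_{2}}$ on $\mathcal{H}_{1}$, whose boundedness is precisely the assumption that $\varphi$ is a bounded operator.

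Next, fixing an arbitrary $u\in \mathcal{H}_{2}$, I would write $\langle \varphi(w_{k}), u\rangle_{\mathcal{H}_{2}}=\langle w_{k}, \varphi^{*}(u)\rangle_{\mathcal{H}_{1}}$ and apply the weak convergence of $\{w_{k}\}$ to the test vector $v=\varphi^{*}(u)\in \mathcal{H}_{1}$, obtaining $\langle w_{k}, \varphi^{*}(u)\rangle_{\mathcal{H}_{1}}\to \langle w, \varphi^{*}(u)\rangle_{\mathcal{H}_{1}}=\langle \varphi(w), u\rangle_{\mathcal{H}_{2}}$. Since $u$ was arbitrary, this shows that $\{\varphi(w_{k})\}$ converges weakly to $\varphi(w)$.

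There is no genuine obstacle here: the only step deserving a line is the appeal to the Riesz representation theorem (equivalently, to the standard existence of Hilbert-space adjoints of bounded operators), and everything else is a one-line computation. If one prefers to avoid adjoints altogether, one may argue directly: for each $u\in \mathcal{H}_{2}$ the map $\ell_{u}(x):=\langle \varphi(x), u\rangle_{\mathcal{H}_{2}}$ is a bounded linear functional on $\mathcal{H}_{1}$, hence of the form $\ell_{u}(x)=\langle x, v_{u}\rangle_{\mathcal{H}_{1}}$ for some $v_{u}\in \mathcal{H}_{1}$ by Riesz, and then $\ell_{u}(w_{k})\to \ell_{u}(w)$ is immediate from the definition of weak convergence.
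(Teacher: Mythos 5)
Your argument is correct and coincides with the paper's own proof: both pass to the adjoint $\varphi^{*}$ and test weak convergence against $\varphi^{*}(u)$. The extra remarks on Riesz representation and the adjoint-free variant are fine but not needed.
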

\begin{proof}
By taking the adjoint operator $\varphi^*$, 
for every $v \in \mathcal{H}_{2}$, 
we have 
$$
\lla \varphi(w_{k}), v \rra_{\mathcal{H}_{2}}
= \lla w_{k}, \varphi^*(v) \rra_{\mathcal{H}_{1}}
\to \lla w, \varphi^*(v) \rra_{\mathcal{H}_{1}}
=\lla \varphi(w), v \rra_{\mathcal{H}_{2}}. 
$$
This completes the proof. 
\end{proof}

\begin{lem}\label{f-lem2.6}
Let $L$ be a closed subspace in a Hilbert space $\mathcal{H}$. 
Then $L$ is closed with respect to the weak topology of $\mathcal{H}$, 
that is, if a sequence $\{w_{k}\}_{k=1}^{\infty}$ in $L$ 
weakly converges to $w$, then the weak limit $w$ belongs to $L$. 
\end{lem}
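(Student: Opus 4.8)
The plan is to prove \emph{Lemma \ref{f-lem2.6}} by reducing closedness in the weak topology to closedness in the norm topology, using the fact that a closed subspace of a Hilbert space is convex, together with the Hahn--Banach separation theorem. More concretely, I would argue as follows.

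\textbf{Step 1: Describe $L$ via its annihilator.} Since $L$ is a closed linear subspace of $\mathcal{H}$, the orthogonal decomposition $\mathcal{H}=L\oplus L^{\perp}$ holds. In particular, an element $w\in\mathcal{H}$ lies in $L$ if and only if $\langle w,v\rangle_{\mathcal{H}}=0$ for every $v\in L^{\perp}$. This characterizes $L$ as the intersection of the kernels of the bounded linear functionals $\langle\,\cdot\,,v\rangle_{\mathcal{H}}$ as $v$ ranges over $L^{\perp}$.

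\textbf{Step 2: Pass to the weak limit.} Suppose $\{w_k\}_{k=1}^{\infty}\subset L$ weakly converges to $w$. Fix any $v\in L^{\perp}$. By Step 1 we have $\langle w_k,v\rangle_{\mathcal{H}}=0$ for every $k$, and by the definition of weak convergence $\langle w_k,v\rangle_{\mathcal{H}}\to\langle w,v\rangle_{\mathcal{H}}$. Hence $\langle w,v\rangle_{\mathcal{H}}=0$. Since $v\in L^{\perp}$ was arbitrary, Step 1 gives $w\in L$, which is the assertion. (One could also phrase this exactly as in the proof of Lemma \ref{f-lem2.5}: each functional $\langle\,\cdot\,,v\rangle_{\mathcal{H}}$ is continuous for the weak topology, and $L$ is the common zero set of such functionals, hence weakly closed.)

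There is essentially no obstacle here; the only point requiring care is the input from Step 1, namely that a norm-closed subspace equals the annihilator of its orthogonal complement, which is the standard projection theorem in Hilbert space. If one wished to avoid invoking the orthogonal projection, one could instead quote the Hahn--Banach separation theorem directly: a norm-closed convex set in a locally convex space is weakly closed, and a linear subspace is convex; but for a Hilbert space the orthogonal-complement argument above is the most transparent route, so that is the one I would write out.
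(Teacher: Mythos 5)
Your proof is correct and is essentially identical to the paper's: the paper also writes $L=M^{\perp}$ for a closed subspace $M$ (namely $M=L^{\perp}$) and concludes $w\in L$ from $0=\lla w_k, v\rra_{\mathcal H}\to\lla w, v\rra_{\mathcal H}$ for every $v\in M$. No differences worth noting.
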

\begin{proof}
By the orthogonal decomposition, there exists a closed subspace $M$ 
such that $L=M^{\perp}$. 
Then it follows that $w \in M^{\perp}=L$ 
since $
0=\lla w_{k}, v \rra_{\mathcal{H}}
\to \lla w, v \rra_{\mathcal{H}}
$
for any $v \in M$. 
\end{proof}

\section{Restriction lemma}\label{x-sec3}
This section is devoted to the proof of Theorem \ref{f-thm1.10} (see 
Theorem \ref{x-thm3.6}), 
which will play a crucial role in the proof of Proposition \ref{f-prop1.9}. 
The following lemma is a direct consequence of 
the Ohsawa--Takegoshi $L^2$ extension theorem (see \cite[Theorem]
{ohsawa-takegoshi}). 

\begin{lem}\label{x-lem3.1} 
Let $X$ be a complex manifold 
and let $\varphi$ be a quasi-plurisubharmonic function on $X$. 
We consider a filtration  
\begin{equation*}
F_k \subset F_{k-1}\subset \cdots \subset F_1 \subset 
F_0:=X, 
\end{equation*}
where $F_i$ is a smooth hypersurface of $F_{i-1}$ for every 
$i$. 
Then we obtain that 
\begin{equation*}
\mathcal J(\varphi|_{F_k})\subset 
\mathcal J(\varphi|_{F_{k-1}})|_{F_k}\subset 
\cdots \subset \mathcal J(\varphi|_{F_1})|_{F_k}\subset 
\mathcal J(\varphi)|_{F_k}. 
\end{equation*} 
\end{lem}
\begin{proof} 
This immediately follows from the Ohsawa--Takegoshi $L^2$ extension theorem. 
\end{proof}

The following lemma is a key ingredient of 
the proof of Theorem \ref{f-thm1.10} (see Theorem \ref{x-thm3.6}). 

\begin{lem}\label{x-lem3.2}
Let $X$ and $\varphi$ be as in Theorem \ref{f-thm1.10}. 
Let $H_i$ be a Cartier divisor on $X$ for $1\leq i\leq k$. 
We assume the following condition:  
\begin{itemize}
\item[$\spadesuit$]\label{x-spade} 
The divisor $\sum _{i=1}^kH_i$ is a simple normal crossing 
divisor on $X$. 
Moreover, 
for every $1\leq i_1<i_2<\cdots < i_l\leq k$ and any $P\in H_{i_1}\cap 
H_{i_2}\cap \cdots \cap H_{i_l}$, 
the set $\{f_{i_1}, f_{i_2}, \cdots, f_{i_l}\}$ is a regular sequence 
for $\mathcal O_{X, P}/\mathcal J(\varphi)_P$, where 
$f_i$ is a $($local$)$ defining equation of $H_i$ for every $i$. 
\end{itemize}
 Furthermore, we assume that 
 $\mathcal J(\varphi|_{F_k})=\mathcal J(\varphi)|_{F_k}$ 
holds, 
where $F_i:=H_1\cap H_2\cap \cdots \cap H_i$ for $1 \leq i \leq k$. 
Then  for every $j$, the equality 
$\mathcal J(\varphi|_{F_j})=\mathcal J(\varphi)|_{F_j}$ holds 
on a neighborhood of $F_k$ in $F_{j}$. 
\end{lem}

Before we prove Lemma \ref{x-lem3.2}, 
we make some remarks to help the reader understand 
condition $\spadesuit$. 

\begin{rem}\label{x-rem3.3}
(1) Let $(A, \mathfrak m)$ be a local ring and let $M$ be a finitely 
generated (nonzero) $A$-module. 
Let $\{ x_1, \ldots, x_r \}$ be a sequence of elements of $\mathfrak m$. 
We put $M_0=M$ and $M_i=M/x_1M+\cdots +x_i M$. 
Then $\{x_1, \ldots, x_r\}$ is said to be a {\textit{regular sequence}} 
for $M$ if $\times x_{i+1}\colon M_i\to M_i$ is injective 
for every $0\leq i\leq r-1$. 

(2) Condition $\spadesuit$ in Lemma \ref{x-lem3.2} does not depend on 
the order of $\{H_1, H_2, \cdots, H_k\}$ 
(see, for example, \cite[Theorem 16.3]{matsumura-book} and 
\cite[Chapter III, Corollary (3.5)]{ak}). 

(3) Let $\mathcal F$ be a coherent analytic sheaf on a 
compact complex manifold $X$. 
Then there exists a finite family $\{Y_i\}_{i\in I}$ of 
irreducible analytic subsets of $X$ such that 
$$\mathrm{Ass}_{\mathcal O_{X, x}}(\mathcal F_x)=\{ \mathfrak p_{x, 1}, 
\ldots, \mathfrak p_{x, r(x)}\}, 
$$ 
where $\mathfrak p_{x, 1}, 
\ldots, \mathfrak p_{x, r(x)}$ are prime ideals of $\mathcal O_{X, x}$ associated 
to the irreducible components of the germs $x\in Y_i$ 
(see, for example \cite[(I.6) Lemma]{manaresi}). 
Note that $Y_i$ is called an 
analytic subset associated with $\mathcal F$. 
In this paper, we simply say that $Y_i$ is an {\textit{associated prime}} 
of $\mathcal F$ if there is no risk of confusion. 
Then condition $\spadesuit$ is equivalent to the following condition: 
\begin{itemize}
\item 
The divisor $\sum _{i=1}^kH_i$ is a simple normal crossing divisor on $X$. 
Moreover, 
for every $1\leq i_1<i_2<\cdots < i_{l-1}< i_l\leq k$,  
the divisor $H_{i_l}$ contains no associated primes of 
$\mathcal O_X/\mathcal J(\varphi)$ and 
$\mathcal O_{H_{i_1}\cap \cdots \cap H_{i_{l-1}}}/ 
\mathcal J(\varphi)|_{H_{i_1}\cap \cdots \cap H_{i_{l-1}}}$. 
\end{itemize} 

(4) \eqref{x-3.1} below may be helpful to understand condition $\spadesuit$. 
We put $H_{i_1 \cdots i_m}:=H_{i_1}\cap 
\cdots \cap H_{i_m}$ for every $1\leq i_1<\cdots < i_m\leq k$. 
Then we can inductively check that 
\begin{equation*}
0\to \mathcal J(\varphi)|_{H_{i_1 \cdots i_{l-1}}}\otimes 
\mathcal O_{H_{i_1 \cdots i_{l-1}}}(-H_{i_l})\to 
\mathcal J(\varphi)|_{H_{i_1 \cdots i_{l-1}}}
\to \mathcal J(\varphi)|_{H_{i_1 \cdots i_l}}\to 0
\end{equation*}
is exact and that 
\begin{equation}\label{x-3.1}
\begin{split}
0&\to \left (\mathcal O_{H_{i_1 \cdots i_{l-1}}}
/\mathcal J(\varphi)|_{H_{i_1 \cdots i_{l-1}}}
\right) \otimes \mathcal O_{H_{i_1 \cdots i_{l-1}}}(-H_{i_l})
\\&\to 
\mathcal O_{H_{i_1 \cdots i_{l-1}}}
/\mathcal J(\varphi)|_{H_{i_1 \cdots i_{l-1}}}
\to 
\mathcal O_{H_{i_1 \cdots i_l}}
/\mathcal J(\varphi)|_{H_{i_1 \cdots i_l}}\to 0
\end{split}
\end{equation}
is also exact (see \eqref{x-3.3} 
and \eqref{x-3.4} in the proof of Lemma \ref{x-lem3.2}). 
\end{rem}

\begin{proof}[Proof of Lemma \ref{x-lem3.2}] 
By condition $\spadesuit$, 
the morphism $\gamma$ in the 
following commutative diagram is injective. 
\begin{equation}\label{x-3.2}
\xymatrix{
 & 0\ar[d] & 0\ar[d]&\\
0 \ar[r]& \mathcal J(\varphi) \otimes \mathcal O_X(-H_1)
\ar[d]
\ar[r]^{\quad \quad \quad \alpha}
&\mathcal J(\varphi)\ar[d]\ar[r]& \Coker \alpha 
\ar[r]\ar[d]^{\beta}& 0 \\ 
0 \ar[r]& \mathcal O_X(-H_1)\ar[d]
\ar[r]
&\mathcal O_X \ar[d]\ar[r]& \mathcal O_{H_1}\ar[r]& 0 \\
&\left(\mathcal O_X/\mathcal J(\varphi)\right)\otimes 
\mathcal O_X(-H_1) \ar[r]^{\quad \quad \quad \gamma}
\ar[d]& \mathcal O_X/\mathcal J(\varphi)\ar[d]&\\
& 0& 0&
}
\end{equation}
Therefore $\beta$ is also injective. 
This implies that $\Coker \alpha=\mathcal J(\varphi)|_{H_1}$ by 
definition. 
Thus, we obtain the following short exact sequence: 
\begin{equation*}
0\to \mathcal J(\varphi)\otimes 
\mathcal O_X(-H_1)\to 
\mathcal J(\varphi)\to \mathcal J(\varphi)|_{H_1}\to 0.
\end{equation*} 
We also obtain the following short exact sequence: 
\begin{equation*}
0\to \left(\mathcal O_X/\mathcal J(\varphi)\right)\otimes 
\mathcal O_X(-H_1)\overset{\gamma}{\to} \mathcal O_X/\mathcal J(\varphi) \to 
\mathcal O_{H_1}/\mathcal J(\varphi)|_{H_1}\to 0
\end{equation*} 
by the above 
big commutative diagram.  
Similarly, by condition $\spadesuit$, 
we can inductively check that 
\begin{equation}\label{x-3.3}
0\to \mathcal J(\varphi)|_{F_i}\otimes 
\mathcal O_{F_i}(-H_{i+1})\to 
\mathcal J(\varphi)|_{F_i}\to \mathcal J(\varphi)|_{F_{i+1}}\to 0
\end{equation} 
and 
\begin{equation}\label{x-3.4}
0\to \left(\mathcal O_{F_i}/\mathcal J(\varphi)|_{F_i}\right)
\otimes \mathcal O_{F_i}(-H_{i+1})\to 
\mathcal O_{F_i}/\mathcal J(\varphi)|_{F_i}\to 
\mathcal O_{F_{i+1}}/\mathcal J(\varphi)|_{F_{i+1}}\to 0
\end{equation}
are exact for every $1\leq i\leq k-1$. 
For $0 \leq i \leq k-1$, we consider the following commutative diagram: 
$$
\xymatrix{
 & 0\ar[d] & 0\ar[d]&\\
0 \ar[r]& \mathcal J(\varphi|_{F_i}) \otimes \mathcal O_{F_i}(-H_{i+1})
\ar[d]_{a_i}
\ar[r]
&\mathcal J(\varphi)|_{F_i} \otimes 
\mathcal O_{F_i}(-H_{i+1})\ar[d]\ar[r]& \Coker b_i\otimes \mathcal O_{F_i}
(-H_{i+1})\ar[r]\ar[d]^{d_i}& 0 \\ 
0 \ar[r]& \mathcal J(\varphi|_{F_i}) 
\ar[d]
\ar[r]^{b_i}
&\mathcal J(\varphi)|_{F_i} \ar[d]\ar[r]& \Coker b_i\ar[r]& 0 \\
&\Coker a_i \ar[r]^{c_i}\ar[d]& \mathcal J(\varphi)|_{F_{i+1}} \ar[d]&\\
& 0& 0.&
}
$$
 
The assumption $\mathcal J(\varphi|_{F_k})=\mathcal 
J(\varphi)|_{F_k}$ implies that 
$\mathcal J(\varphi|_{F_{k-1}})|_{F_k}
=\mathcal J(\varphi)|_{F_k}$ holds by 
$\mathcal J(\varphi|_{F_k})\subset 
\mathcal J(\varphi|_{F_{k-1}})|_{F_k}\subset 
\cdots \subset \mathcal J(\varphi)|_{F_k}$ in Lemma 
\ref{x-lem3.1}. 
If $\mathcal J(\varphi|_{F_i})|_{F_{i+1}}=
\mathcal J(\varphi)|_{F_{i+1}}$ on a neighborhood of $F_k$ 
in $F_{i+1}$, 
then $c_i$ is surjective on a neighborhood of $F_k$ 
in $F_{i+1}$ by the definition of $\mathcal J(\varphi|_{F_i})
|_{F_{i+1}}$. 
Then $d_i$ is also surjective on a neighborhood of $F_k$ 
in $F_{i}$ by the above big commutative diagram. 
By Nakayama's lemma, $\Coker b_i$ is zero on a neighborhood of $F_k$ 
in $F_{i}$. 
This implies that $\mathcal J(\varphi|_{F_i})=\mathcal J(\varphi)|_{F_i}$ 
on a neighborhood of $F_k$ 
in $F_i$. 
Thus, we obtain that 
$\mathcal J(\varphi|_{F_{i-1}})|_{F_i}=\mathcal J(\varphi)|_{F_i}$ 
on a neighborhood of $F_k$ 
in $F_i$ since we have 
$\mathcal J(\varphi|_{F_i})\subset 
\mathcal J(\varphi|_{F_{i-1}})|_{F_{i}}\subset 
\mathcal J(\varphi)|_{F_i}$ by Lemma \ref{x-lem3.1}. 
By repeating this argument, we see that 
$\mathcal J(\varphi|_{F_j})=\mathcal J(\varphi)|_{F_j}$ on a neighborhood 
of $F_k$ 
in $F_{j}$ 
for every $j$. 
This is the desired property. 
\end{proof}

\begin{lem}\label{x-lem3.4}
Assume that $\{H_1, \cdots, H_m\}$ satisfies condition $\spadesuit$ 
in Lemma \ref{x-lem3.2}. 
Let $H_{m+1}$ be a smooth Cartier divisor on $X$ such that 
$\sum _{i=1}^{m+1}H_i$ is a simple normal crossing divisor on $X$ and 
that $H_{m+1}$ contains no associated primes of 
\begin{equation*}
\mathcal O_X/\mathcal J(\varphi)  
\quad\text{and}\quad \mathcal O_{H_{i_1}\cap \cdots \cap H_{i_l}}/ 
\mathcal J(\varphi)|_{H_{i_1}\cap \cdots \cap H_{i_l}}
\end{equation*} for 
every $1\leq i_1< \cdots < i_l \leq m$. 
Then $\{H_1, \cdots, H_{m}, H_{m+1}\}$ also satisfies condition $\spadesuit$. 
\end{lem}

\begin{proof}
This is obvious from Remark \ref{x-rem3.3} (3). 
\end{proof}

\begin{lem}\label{x-lem3.5}
Let $\Lambda_0$ be a sublinear system of 
a free linear system $\Lambda$
on $X$ with 
$\dim \Lambda_0\geq 1$. 
Assume that $\{H_1, \cdots, H_m\}$ satisfies condition 
$\spadesuit$ in Lemma \ref{x-lem3.2}. 
We put 
\begin{equation*}
\mathcal F_0:=\{D\in \Lambda_0 \, | \, \{H_1, \cdots, H_m, D\} 
\ \text{satisfies}\  \spadesuit\}. 
\end{equation*}
Then 
$\mathcal F_0$ 
is Zariski open in $\Lambda_0$. 
In particular, if 
$\mathcal F_0$ 
is not empty, then it is a dense 
Zariski open set of $\Lambda_0$. 

Moreover, we assume that 
there exists $D_0\in \mathcal F_0$ such that 
$\mathcal J(\varphi|_V)=\mathcal J(\varphi)|_V$, where 
$V$ is an irreducible component of $H_1\cap \cdots 
\cap H_m \cap D_0$. 
Let $D$ be a member of 
$\mathcal F_0$ such that 
$V$ is an irreducible component of $H_1\cap \cdots \cap H_m \cap 
D$. 
Then $\mathcal J(\varphi|_D)=\mathcal J(\varphi)|_D$ holds on a neighborhood 
of $V$ 
in $D$. 
\end{lem}

\begin{proof}
We put 
$$
\mathcal F:=\{ D \in \Lambda \, |\, \{H_1, \cdots, H_m, D\} 
\ \text{satisfies}\  \spadesuit\}. 
$$ 
Then by Remark \ref{x-rem3.3} (3) 
and Lemma \ref{x-lem3.4}, it is easy 
to see that $\mathcal F$ is a dense Zariski open set in $\Lambda$ since 
$\Lambda$ is a free linear system on $X$. 
Therefore, $\mathcal F_0=\mathcal F\cap \Lambda_0$ is 
Zariski open in $\Lambda_0$. 
By Lemma \ref{x-lem3.2}, 
the equality $\mathcal J(\varphi|_D)=\mathcal J(\varphi)|_D$ holds 
on a neighborhood of $V$ 
in $D$ 
if $D \in \mathcal F_0$ and $V$ is an irreducible component 
of $H_1\cap \cdots \cap H_m \cap D$. 
We note that we do not need the compactness of $X$ 
in the proof of Lemma \ref{x-lem3.2}. 
Therefore, we can shrink $X$ and assume 
that $V=H_1\cap \cdots \cap H_m \cap D$ in the 
above argument. 
\end{proof}

The following theorem (see Theorem \ref{f-thm1.10}) 
is one of the key results of this paper. 

\begin{thm}[Density of good divisors:~Theorem \ref{f-thm1.10}]
\label{x-thm3.6} 
Let $X$ be a compact complex manifold, 
let $\Lambda$ be a free linear system on $X$ with $\dim \Lambda \geq 1$, 
and let $\varphi$ be a quasi-plurisubharmonic function on $X$. 
We put 
\begin{equation*}
\mathcal G :=\{ H\in \Lambda \, |\,  
{\text{$H$ is smooth and $\mathcal J(\varphi|_H)=
\mathcal J(\varphi)|_H$}}\}. 
\end{equation*}
Then $\mathcal G$ is dense in $\Lambda$ in the classical 
topology. 
\end{thm}
\begin{proof}
We may assume that $\varphi\not\equiv -\infty$. 
Throughout this proof, we put $f:=\Phi_\Lambda: X\to Y:=f(X)\subset \mathbb P^N$. 
Note that $N=\dim \Lambda$. 
We divide the proof into several steps. 

\setcounter{step}{-1}

\begin{step}[Idea of the proof]\label{x-step3.6.0}
In this step, we will explain an idea of the proof. 

A general member $H$ of $\Lambda$ is smooth by Bertini's theorem, 
and it always satisfies that 
$\mathcal J(\varphi|_H)\subset \mathcal J(\varphi)|_H$ 
by Lemma \ref{x-lem3.1}. 
Hence, the problem is to check 
that the opposite 
inclusion holds for any member of a dense subset in $\Lambda$. 

If $\dim \Lambda=1$, that is, $\Lambda$ is a pencil, 
then a member $H$ of $\Lambda$ is a fiber of 
the morphism $f=\Phi_\Lambda\colon X\to \mathbb P^1$
at a point $P \in \mathbb P^1$. 
By Fubini's theorem, we have $\mathcal J(\varphi|_{f^{-1}(P)})
\supset \mathcal J(\varphi)|_{f^{-1}(P)}$ for almost all $P\in \mathbb P^1$. 
This is the desired statement when $\dim \Lambda=1$. 
In general, we have $H_1\cap H_2\neq \emptyset$ for two 
general members $H_1$ and $H_2$ of $\Lambda$. 
For this reason, we choose $H_1$ and $H_2$ suitably (see 
Step \ref{x-step3.6.2} and 
Step \ref{x-step3.6.3}), take the blow-up 
$Z\to X$ along $H_1\cap H_2$, and reduce the problem to 
the pencil case (see Step \ref{x-step3.6.4}). 
\end{step}

\begin{step}\label{x-step3.6.1}
In this step, we will prove the theorem when $\dim Y=1$. 

Let $\psi_0, \ldots, \psi_N$ be a basis of 
$H^0(\mathbb P^N, \mathcal O_{\mathbb P^N}(1))$. 
We put 
$$
\mathscr Y =\{ (y, [a_0:\cdots : a_N])\in 
Y\times \mathbb P^N \, |\, a_0\psi_0(y)+
\cdots +a_N\psi_N(y)=0\}\subset Y\times \mathbb P^N
$$ 
and consider the following commutative diagram: 
$$
\xymatrix{
\mathscr X\ar[d]_{\widetilde f}
\ar@{^{(}->}[r]& X\times \mathbb P^N \ar[d]\ar[r]& X\ar[d]^-f\\
\mathscr Y \ar@{^{(}->}[r]\ar[dr]_-\pi& Y\times \mathbb P^N \ar[d]^-{p_2}\ar[r]
&Y \\ 
& \mathbb P^N , &
}
$$
where $\mathscr X\hookrightarrow X\times \mathbb P^N
\to X$ is the base change of $\mathscr Y\hookrightarrow Y
\times \mathbb P^N
\to Y$ by $f\colon X\to Y$, $p_2$ is the second projection, 
and 
$\pi=p_2|_{\mathscr Y}$. 
We can easily see that there exists a nonempty Zariski open set $U$ of 
$\mathbb P^N$ such that 
$\pi$ and $\widetilde f$ are \'etale and smooth over $U$, respectively. 
We note that $\Lambda=f^*|\mathcal O_{\mathbb P^N}(1)|$ by construction. 
Let $H$ be a member of $\Lambda$ corresponding to a point of $U$. 
Then $H$ is smooth and $\mathcal J(\varphi|_H)\subset 
\mathcal J(\varphi)|_H$ holds 
by Lemma \ref{x-lem3.1}. 
On the other hand, by applying Fubini's theorem to 
$(\pi\circ \widetilde f)^{-1}(U)\to U$, the opposite 
inclusion $\mathcal J(\varphi)|_H\subset \mathcal J(\varphi|_H)$ 
holds for almost all $H\in \Lambda$. 
This means that $\mathcal G$ is dense in $\Lambda$ in the classical topology. 
\end{step}

\begin{step}\label{x-step3.6.2}
In this step, we will prove the following preparatory lemma. 
\begin{lem}\label{x-lem3.7}
Let $D_1$ and $D_2$ be two members of $\Lambda$ such that 
$\{D_1, D_2\}$ satisfies condition $\spadesuit$ in Lemma \ref{x-lem3.2}. 
Let $\mathcal P_0$ 
be the pencil spanned by $D_1$ and $D_2$. 
Then, for almost all $D\in \mathcal P_0$, 
the member $D$ is smooth, 
$\{D\}$ satisfies condition $\spadesuit$, and 
$\mathcal J(\varphi|_D)=\mathcal J(\varphi)|_D$ holds 
outside $D_1\cap D_2$. 
\end{lem} 
\begin{proof}[Proof of Lemma \ref{x-lem3.7}] 
Let $A_{i}$ be a hyperplane in $\mathbb{P}^{N}$ such 
that $D_{i}=f^{*}A_{i}$, 
and $\rm{pr}\colon \mathbb{P}^{N} \dashrightarrow \mathbb{P}^{1}$ 
be the linear projection 
from the subspace $A_{1} \cap A_{2} \cong \mathbb{P}^{N-2}$. 
Then the meromorphic map 
$X\dashrightarrow \mathbb P^1$ associated with $\mathcal P_0$ 
is the composition of $f\colon X \to \mathbb{P}^{N}$ and 
$\rm{pr}\colon \mathbb{P}^{N} \dashrightarrow \mathbb{P}^{1}$. 
Since the blow-up of $\mathbb{P}^{N}$  along  $A_{1} \cap A_{2}$ 
gives an elimination of
the indeterminacy locus of $\rm{pr}\colon 
\mathbb{P}^{N} \dashrightarrow \mathbb{P}^{1}$, 
the blow-up $p\colon 
Z\to X$  along $D_1\cap D_2$ satisfies the following commutative diagram: 
\begin{equation*}
\xymatrix{
Z \ar[r]^{\quad p \quad}\ar[dr]_{ q }& X\ar@{-->}[d] \ar@{>}[r]^{f=\Phi_{\Lambda}} 
&\mathbb{P}^{N} \ar@{-->}[dl]^{{\rm{pr}} } \\
& \mathbb P^1. & 
}
\end{equation*}
By applying Fubini's theorem to $q\colon Z\to \mathbb P^1$, 
we obtain that 
$\mathcal J(p^*\varphi|_{q^{-1}(Q)})=\mathcal J(p^*\varphi)|_{q^{-1}
(Q)}$ for almost all $Q\in \mathbb P^1$. 
Lemma \ref{x-lem3.5} implies that $\{D\}$ satisfies condition $\spadesuit$ for 
almost all $D\in \mathcal P_0$. 
The desired properties follow 
since $p$ is an isomorphism outside 
$D_1\cap D_2$. 
\end{proof}
\end{step}

\begin{step}\label{x-step3.6.3} 
In this step, we will find a smooth member $H$ of $\Lambda$ 
such that $\mathcal J(\varphi|_H)=\mathcal J(\varphi)|_H$ and 
that $\{H\}$ satisfies condition $\spadesuit$. 

From now on, we assume that $\dim \Lambda\geq 2$ and that 
the statement of 
Theorem \ref{x-thm3.6} holds for lower dimensional free linear systems. 
We put $l:= \dim Y$. By Step \ref{x-step3.6.1}, we have a 
smooth member $H$ of $\Lambda$ with the desired 
properties when $l=1$. Therefore, we may assume that 
$l\geq 2$. 
We take two general hyperplanes $B_1$ and $B_2$ of $\mathbb P^N$. 
We put $D_1:=f^*B_1$ and $D_2:=f^*B_2$. 
By Lemma \ref{x-lem3.7}, 
we can take a hyperplane $A_1$ of $\mathbb P^N$ such that 
$X_1:=f^*A_1$ is smooth, 
$\{X_1\}$ satisfies condition $\spadesuit$, and $\mathcal J(\varphi|_{X_1})=
\mathcal J(\varphi)|_{X_1}$ outside $D_1\cap D_2$. 
Let $\Lambda|_{X_1}$ be the linear system on $X_1$ 
defined by $f_{1}\colon X_{1}=X \cap f^{-1}(A_{1})\to Y \cap A_{1} 
\subset  A_{1} \cong \mathbb P^{N-1}$, 
that is, the set of  pull-backs of the hyperplanes in 
$A_{1} \cong \mathbb P^{N-1}$ by $f_1$. 
By construction, we have $\dim \Lambda|_{X_1}=\dim \Lambda-1$. 
Thus, we see that 
\begin{equation*}
\{H\in \Lambda \, |\, X_1\cap H \ \text{is smooth and}\ 
\mathcal J(\varphi|_{X_1\cap H})=\mathcal J(\varphi|_{X_1})|_{X_1\cap H} 
\}
\end{equation*} 
is dense in $\Lambda$ in the classical topology by the induction 
hypothesis. 
Then we can take 
general hyperplanes $A_2, A_3, \cdots, A_l$ of $\mathbb P^N$ such that 
$\dim (A_1\cap \cdots\cap  A_l \cap Y)=0$ and that  
$f^{-1}(Q)$ is smooth and 
\begin{equation}\label{x-3.5}
\mathcal J(\varphi|_{f^{-1}(Q)})
=\mathcal J(\varphi|_{X_1})|_{f^{-1}(Q)}
\end{equation} for every $Q\in A_1\cap 
\cdots \cap A_l \cap Y$ by using the induction hypothesis repeatedly. 
Without loss of generality, we may assume that 
$f^{-1}(Q)\cap D_1\cap D_2=\emptyset$ for 
every $Q\in A_1\cap \cdots \cap A_l\cap Y$. 
Since 
\begin{equation*}
\mathcal J(\varphi|_{X_1})=\mathcal J(\varphi) |_{X_1}
\end{equation*} 
holds outside $D_1\cap D_2$, 
\begin{equation}\label{x-3.6}
\mathcal J(\varphi|_{X_1})|_{f^{-1}(Q)}=
\mathcal J(\varphi)|_{f^{-1}(Q)}
\end{equation} 
holds for every $Q\in A_1\cap \cdots \cap A_l\cap Y$. 
Therefore, we have 
\begin{equation*}
\mathcal J(\varphi|_{f^{-1}(Q)})
=\mathcal J(\varphi|_{X_1})|_{f^{-1}(Q)}=
\mathcal J(\varphi)|_{f^{-1}(Q)}
\end{equation*} for every $Q\in A_1\cap 
\cdots \cap A_l \cap Y$ by \eqref{x-3.5} and 
\eqref{x-3.6}. 
We may assume that $\{X_1=f^*A_1, f^*A_2, \cdots, 
f^*A_l\}$ satisfies condition $\spadesuit$. 
We take one point $P$ of $A_1\cap \cdots \cap A_l\cap Y$ and 
fix $A_2, \cdots, A_l$. 
By applying 
Lemma \ref{x-lem3.5} to the linear system 
\begin{equation*}
\Lambda_0 :=\{D\in \Lambda
\, | \, f^{-1}(P)\subset D\}, 
\end{equation*} 
we see that 
\begin{equation*} 
\mathcal F_0
:=\{ D\in \Lambda_0\, |\, \{D, f^*A_2, \cdots, 
f^*A_l\}\ \text{satisfies} \ \spadesuit\}
\end{equation*} 
is Zariski open in $\Lambda_0$. 
Note that $\mathcal F_0$ 
is nonempty by $X_1=f^*A_1\in 
\mathcal F_0$. 
By the latter conclusion of Lemma \ref{x-lem3.5}, 
we have: 
\begin{lem}\label{x-lem3.8}
Let $A_g$ be a general hyperplane of $\mathbb P^N$ passing 
through $P$. 
We put $X_g :=f^*A_g$. 
Then $\mathcal J(\varphi|_{X_g})=\mathcal J(\varphi)|_{X_g}$ holds 
on a neighborhood of $f^{-1}(P)$ 
in $X_g$. 
\end{lem} 
Let $\pi\colon X'\to X$ be 
the blow-up along $f^{-1}(P)$ and let 
${\rm{Bl}}_P(\mathbb P^N) \to \mathbb P^N$ be the blow-up 
of $\mathbb P^N$ at $P$. 
The induced morphism $\alpha\colon X' \to {\rm{Bl}}_P (\mathbb P^N) $ 
and the linear projection $\gamma\colon 
\mathbb P^N\dashrightarrow \mathbb P^{N-1}$ 
from $P\in \mathbb P^N$ satisfy the following commutative diagram. 
\begin{equation*}
\xymatrix{
X' \ar[r]^\pi\ar[dd]_\alpha& X \ar[d]^{f}\\ 
& Y \ar@{^{(}->}[d]\\ 
{\rm{Bl}}_P (\mathbb P^N) \ar[d]_{\beta}\ar[r]& \mathbb P^N\ar@{-->}[ld]
^\gamma\\ 
\mathbb P^{N-1}. &  
}
\end{equation*} 
We put $f':=\beta\circ \alpha$ and $Y':=f'(X')$. 
By applying the induction hypothesis to 
$f'\colon X'\to Y'\subset \mathbb P^{N-1}$, 
we can take a general hyperplane $A$ of $\mathbb P^{N-1}$ such that 
$f'^*A$ is smooth and that 
\begin{equation}\label{x-3.7}
\mathcal J( \pi^*\varphi|_{f'^{-1}(A)})=\mathcal J(\pi^*\varphi)|_{f'^{-1}(A)}. 
\end{equation}
Let $A_0$ be the hyperplane of $\mathbb P^N$ spanned by 
$P$ and $A$. 
Then we can see that 
\begin{equation}\label{x-3.8}
\{f^*A_2, \cdots, f^*A_l, H:=f^*A_0\}
\end{equation} satisfies condition $\spadesuit$ since 
$A$ is a general hyperplane of $\mathbb P^{N-1}$. 
We see that $\mathcal J(\varphi|_H)
=\mathcal J(\varphi)|_H$ by \eqref{x-3.7} and Lemma \ref{x-lem3.8}, 
and that $\{H\}$ satisfies condition $\spadesuit$ by \eqref{x-3.8}. 
Therefore this $H$ has the desired properties. 
\end{step}

\begin{step}\label{x-step3.6.4}
In this final step, we will prove that $\mathcal G$ is 
dense in $\Lambda$ in the classical topology. 

We will use the induction on $\dim X$. 
If $\dim X=1$, then $\dim Y=1$. 
Therefore, 
by Step \ref{x-step3.6.1}, 
we see that $\mathcal G$ is dense in $\Lambda$ in the classical 
topology. 
Therefore, we assume that 
$\dim X\geq 2$. 
If $\dim Y=1$, 
then $\mathcal G$ is dense by Step 
\ref{x-step3.6.1}. Thus, we may assume that 
$\dim \Lambda\geq \dim Y\geq 2$. 
By Step \ref{x-step3.6.3}, 
we can take a smooth member 
$H_0$ of $\Lambda$ such that 
$\mathcal J(\varphi|_{H_0})=\mathcal J(\varphi)|_{H_0}$ 
and that $\{H_0\}$ satisfies condition $\spadesuit$. 
By applying the induction hypothesis to $\Lambda|_{H_0}$, we 
see that 
\begin{equation*}
\mathcal G' :=\{ H' \in \Lambda \, |\, 
{\text{$H_0\cap H'$ is smooth and $\mathcal 
J(\varphi|_{H_0\cap H'})=\mathcal J(\varphi|_{H_0})
|_{H_0\cap H'}$}}\}
\end{equation*} 
is dense in $\Lambda$ in the classical topology. 
Since $\Lambda$ is a free linear system, we know 
that 
\begin{equation*}
\{H' \in \Lambda\, |\, \{H_0, H'\} \ \text{satisfies}\  \spadesuit\} 
\end{equation*}
is a nonempty Zariski open set in $\Lambda$. 
Therefore, 
\begin{equation*}
\mathcal G'':=\{H'\in \mathcal G' \, |\, \{H_0, H'\} \ \text{satisfies}
\ \spadesuit\} 
\end{equation*}
is also dense in $\Lambda$ in the classical topology. 
We note that 
\begin{equation*}
\mathcal J (\varphi|_{H_0\cap H'})
=\mathcal J(\varphi |_{H_0})|_{H_0\cap H'} 
=\mathcal J(\varphi)|_{H_0\cap H'}
\end{equation*} 
for every $H'\in \mathcal G'$ since $\mathcal J(\varphi|_{H_0})
=\mathcal J(\varphi)|_{H_0}$. 
Therefore, we obtain that 
\begin{equation}\label{x-3.9}
\mathcal J (\varphi|_{H_0\cap H'})
=\mathcal J(\varphi |_{H'})|_{H_0\cap H'} 
=\mathcal J(\varphi)|_{H_0\cap H'}
\end{equation} 
for every $H'\in \mathcal G''$. 
By the latter conclusion of Lemma \ref{x-lem3.5}, 
\eqref{x-3.9} indicates that $\mathcal J(\varphi|_{H'})=
\mathcal J(\varphi)|_{H'}$ on a neighborhood 
of $H_0\cap H'$ in $H'$ for every $H'\in \mathcal G''$. 
We consider the pencil $\mathcal P_{H'}$ spanned by 
$H_0$ and $H'\in \mathcal G''$, that is, 
the sublinear system of $\Lambda$ spanned by 
$H_0$ and $H'$. 
Let $D$ be a general member of $\mathcal P_{H'}$. 
Then by Lemma \ref{x-lem3.5}, $\{H_0, D\}$ satisfies $\spadesuit$ 
and $\mathcal J(\varphi|_D)=\mathcal J(\varphi)|_D$ holds on 
a neighborhood of $H_0\cap H'$ in $D$. 
Hence, by Lemma \ref{x-lem3.7}, we say that 
almost all members of $\mathcal P_{H'}$ are contained in $\mathcal G$. 
By this observation, we obtain that $\mathcal G$ is dense in 
$\Lambda$ in the classical topology. 
\end{step} 
Thus, we obtain the desired statement. 
\end{proof}

The following examples show that 
$\mathcal G$ in Theorem \ref{f-thm1.10} (Theorem \ref{x-thm3.6}) 
is not always Zariski open in $\Lambda$, 
or  even an intersection of countably many 
nonempty Zariski open sets of $\Lambda$

\begin{ex}\label{x-ex3.9}
We put 
\begin{equation*}
\psi(z):=\sum _{k=1}^\infty 2^{-k} \log 
\left| z-\frac{1}{k}
\right|
\end{equation*}
for $z\in \mathbb C$. 
Then it is easy to see that $\psi(z)$ is smooth for $|z|\geq 2$. 
By using a suitable 
partition of unity, we can construct a function $\varphi(z)$ on 
$\mathbb P^1$ such that 
$\varphi(z)=\psi(z)$ for $|z|\leq 3$ and that 
$\varphi(z)$ is smooth for $|z|\geq 2$ on $\mathbb P^1$.  
We can see that $\varphi$ is a quasi-plurisubharmonic function 
on $\mathbb P^1$. 
Since the Lelong number $\nu (\varphi, 1/n)$ of 
$\varphi$ at $1/n$ is $2^{-n}$ for every 
positive integer $n$, 
we see that $\mathcal J(\varphi)=\mathcal O_{\mathbb P^1}$ by Skoda's 
theorem (see, for example, 
\cite[(5.6) Lemma]{demailly}). 
Therefore $\mathcal J(\varphi)|_P=\mathcal O_P$ for 
every $P\in \mathbb P^1$. 
On the other hand, 
we have $\varphi(1/n)=-\infty$ for every positive integer $n$. 
If $P=1/n$ for some positive integer $n$, 
then $\mathcal J(\varphi|_P)=0$. 
Thus  
\begin{equation*}
\mathcal G:=\{ H \in |\mathcal O_{\mathbb P^1}(1)| \, | 
\, \mathcal J(\varphi|_H)=\mathcal J(\varphi)|_H\}
\end{equation*} 
is not a Zariski open set of $|\mathcal O_{\mathbb P^1}(1)|$ 
($\simeq \mathbb P^1$). 
\end{ex}

\begin{ex}\label{x-ex3.10}
We put $K:=\{ z \in \mathbb C \, |\, |z|\leq 1 \}$. 
Let $\{w_n \}_{n= 1}^{\infty}$ be a countable 
dense subset of $K$ and let $\{a_n\} _{n=1}^{\infty}$ be 
positive real numbers such that 
$\sum_{n=1}^{\infty} a_n <\infty$. 
We put 
\begin{equation*}
\psi(z):=\sum _{n=1}^{\infty} a_n \log |z-w_n|
\end{equation*} 
for $z\in \mathbb C$. 
Then we see that 
\begin{itemize}
\item[$\bullet$] $\psi$ is subharmonic on $\mathbb C$ and $\psi 
\not \equiv -\infty$, 
\item[$\bullet$] $\psi=-\infty$ on an uncountable dense subset of $K$, and 
\item[$\bullet$] $\psi$ is discontinuous almost everywhere on $K$. 
\end{itemize}
For the details, see \cite[Theorem 2.5.4]{ransford}. 
By using a suitable partition of unity, we can construct a function 
$\varphi(z)$ on $\mathbb P^1$ such that 
$\varphi(z)=\psi(z)$ for $|z|\leq 3$ and that 
$\varphi(z)$ is smooth for $|z|\geq 2$ on $\mathbb P^1$. 
Then we can see that $\varphi$ is a quasi-plurisubharmonic function 
on $\mathbb P^1$. 
In this case, 
\begin{equation*}
\mathcal G:=\{ H \in |\mathcal O_{\mathbb P^1}(1)| \, | 
\, \mathcal J(\varphi|_H)=\mathcal J(\varphi)|_H\}
\end{equation*} 
can not be written as an intersection of countably 
many nonempty Zariski open sets of $|\mathcal O_{\mathbb P^1}
(1)|$. 
\end{ex} 

As a direct consequence of Theorem \ref{x-thm3.6}, we have: 

\begin{cor}[Generic restriction theorem]\label{x-cor3.11} 
Let $X$ be a compact complex manifold and let 
$\varphi$ be a quasi-plurisubharmonic function on $X$. 
Let $\Lambda$ be a free linear system on $X$ with $\dim \Lambda
\geq 1$. 
We put 
\begin{equation*}
\mathcal H:=\{ H\in \mathcal G\, | \, H \ 
\text{contains no associated primes of}\ \mathcal O_X/\mathcal J(\varphi)\}, 
\end{equation*} 
where 
\begin{equation*}
\mathcal G :=\{ H\in \Lambda \, |\,  
{\text{$H$ is smooth and $\mathcal J(\varphi|_H)=
\mathcal J(\varphi)|_H$}}\}
\end{equation*} 
as in Theorem \ref{x-thm3.6}. 
Then $\mathcal H$ is dense in $\Lambda$ in the classical topology. 
Moreover, the following short sequence 
\begin{equation}\label{x-3.10}
0\to \mathcal J(\varphi)\otimes \mathcal O_X(-H)
\to \mathcal J(\varphi) \to \mathcal J(\varphi|_H)\to 0
\end{equation}  
is exact for any member $H$ of $\mathcal H$. 
\end{cor}
\begin{proof}
It is easy to see that 
\begin{equation*}
\{H\in \Lambda \, |\, H \ \text{contains no 
associated primes of} \ \mathcal O_X/\mathcal J(\varphi)\}
\end{equation*} 
is a nonempty 
Zariski open set of $\Lambda$ since 
$\Lambda$ is a free linear system 
on $X$. 
Therefore $\mathcal H$ is dense in $\Lambda$ in the classical 
topology by Theorem \ref{x-thm3.6} (see Theorem \ref{f-thm1.10}). 

Let $H$ be a member of $\mathcal H$. 
Then we obtain the 
following commutative diagram (see also \eqref{x-3.2}). 
$$
\xymatrix{
0 \ar[r]& \mathcal J(\varphi) \otimes \mathcal O_X(-H)
\ar@{^{(}->}[d]
\ar[r]^{\quad \quad \alpha}
&\mathcal J(\varphi) \ar@{^{(}->}[d]\ar[r]& \Coker\alpha 
\ar@{^{(}->}[d]\ar[r]& 0 \\ 
0 \ar[r]& \mathcal O_X(-H) \ar[r]& \mathcal O_X \ar[r]& \mathcal O_H 
\ar[r]& 0
}
$$
As in the proof of Lemma \ref{x-lem3.2}, 
we obtain $\Coker \alpha= \mathcal J(\varphi)|_H$. 
Since $H\in \mathcal H\subset \mathcal G$, 
we have $\mathcal J(\varphi)|_H=\mathcal J(\varphi|_H)$. 
Therefore, we obtain the desired short exact sequence 
\eqref{x-3.10}. 
\end{proof}

We will use Corollary \ref{x-cor3.11} in Step \ref{x-step1.9.3} 
in the proof of Proposition \ref{f-prop1.9} (see Section \ref{x-sec4}). 
We close this section with a remark on the multiplier ideal 
sheaves associated with effective $\mathbb Q$-divisors on 
smooth projective varieties. 

\begin{rem}[Multiplier ideal sheaves for 
effective $\mathbb Q$-divisors]\label{x-rem3.12}
Let $X$ be a smooth projective variety and let $D$ be an effective 
$\mathbb Q$-divisor on $X$. 
Let $S$ be a smooth hypersurface in $X$. 
We assume that $S$ is not contained in any component 
of $D$. 
Then we obtain the following short exact sequence: 
\begin{equation}\label{x-3.11}
0\to \mathcal J(X, D)\otimes \mathcal O_X(-S)
\to \mathrm{Adj}_S(X, D)\to \mathcal J(S, D|_S)\to 0, 
\end{equation}
where $\mathcal J(X, D)$ (resp.~$\mathcal J(S, D|_S)$) 
is the multiplier ideal sheaf associated with $D$ (resp.~$D|_S$). 
Note that $\mathrm{Adj}_S(X, D)$ is the adjoint ideal of $D$ along 
$S$ (see, for example, \cite[Theorem 3.3]{lazarsfeld-survey}). 
If $S$ is in general position with respect to $D$, 
then we can easily see that $\mathrm{Adj}_S(X, D)$ 
coincides with $\mathcal J(X, D)$. 
Let $H$ be a general member of a free linear system $\Lambda$ with 
$\dim \Lambda\geq 1$. 
Then we can easily see that 
\begin{equation}\label{x-3.12}
\mathcal J(H, D|_H)=\mathcal J(X, D)|_H
\end{equation}
holds by the definition of the multiplier ideal 
sheaves for effective $\mathbb Q$-divisors (see, for example, 
\cite[Example 9.5.9]{lazarsfeld-book}). 

By this observation, 
if $X$ is a smooth projective variety and 
$\varphi$ is a quasi-plurisubharmonic function 
associated with an effective $\mathbb Q$-divisor $D$ on $X$,  
then $\mathcal G$ in Theorem \ref{x-thm3.6} (see Theorem \ref{f-thm1.10}) 
and $\mathcal H$ in Corollary \ref{x-cor3.11} are dense  
Zariski open in $\Lambda$ by \eqref{x-3.12}. 
Moreover, we can easily check 
that \eqref{x-3.10} in Corollary 
\ref{x-cor3.11} holds for general members $H$ of $\Lambda$ by 
\eqref{x-3.11}. 
\end{rem}

\section{Proof of Proposition \ref{f-prop1.9}}\label{x-sec4}

In this section, we prove Proposition \ref{f-prop1.9} 
and explain how to reduce Corollary \ref{f-cor1.7} and 
Theorem \ref{f-thm1.4} to Theorem \ref{f-thmD} and Theorem \ref{f-thmA},  
respectively. 

\setcounter{step}{0}
\begin{proof}[Proof of Proposition \ref{f-prop1.9}] 
Our proof of Proposition \ref{f-prop1.9} consists of the following six steps:  
\begin{step}[Theorem \ref{f-thmA} $\Longrightarrow$ 
Theorem \ref{f-thmB}]\label{x-step1.9.1}
Since $N_1$ is semiample, 
we can take a smooth Hermitian metric 
$h_1$ on $N_1$ such that $\sqrt{-1}\Theta_{h_1}(N_1)
\geq 0$. 
We put $h_2:=h_1^{b/a}$. 
Then $$
\sqrt{-1}(\Theta_{hh_1}(F\otimes N_1)
-t \Theta_{h_2}(N_2))\geq 0
$$ for $0<t \ll 1$. 
It follows that $\mathcal J(hh_1)=\mathcal J (h)$ since $h_1$ is smooth. 
Therefore, by Theorem \ref{f-thmA}, 
we obtain the injectivity in Theorem \ref{f-thmB}. 
\end{step}

\begin{step}[Theorem \ref{f-thmB} $\Longrightarrow$ 
Theorem \ref{f-thmC}]\label{x-step1.9.2}
We assume that $R^if_*(K_X\otimes F\otimes \mathcal J(h))$ 
has a torsion subsheaf. 
Then we can find a very ample line bundle $H$ 
on $Y$ and $0\ne t\in H^0(Y, H)$ such that 
\begin{equation*}
\alpha\colon R^if_*(K_X\otimes F\otimes \mathcal J(h))
\to R^if_*(K_X\otimes F\otimes \mathcal J(h))\otimes H
\end{equation*}
induced by $\otimes t$ is not injective. 
We take a sufficiently large positive integer $m$ such that 
$\Ker \alpha\otimes H^{\otimes m}$ is generated 
by global sections. Then we have $H^0(Y, \Ker\alpha\otimes 
H^{\otimes m})\ne 0$. 
Without loss of generality, by making $m$ sufficiently large, we 
may further assume that 
\begin{equation}\label{x-eq4.1}
H^p(Y, R^qf_*(K_X\otimes F\otimes \mathcal J(h))\otimes 
H^{\otimes m})=0
\end{equation} and 
\begin{equation}\label{x-eq4.2}
H^p(Y, R^qf_*(K_X\otimes F\otimes \mathcal J(h))\otimes 
H^{\otimes m+1})=0
\end{equation}
for every $p>0$ and $q$ by the Serre vanishing theorem. 
By construction, 
\begin{equation}\label{x-eq4.3}
\begin{split}
H^0(Y, R^if_*(K_X\otimes F\otimes \mathcal J(h))\otimes 
H^{\otimes m})
\to 
H^0(Y, R^if_*(K_X\otimes F\otimes \mathcal J(h))\otimes H^{\otimes 
m+1})
\end{split}
\end{equation}
induced by $\alpha$ is not injective. 
Thus, by \eqref{x-eq4.1}, \eqref{x-eq4.2}, and \eqref{x-eq4.3},  
we see that 
\begin{equation*}
\begin{split}
H^i(X, K_X\otimes F\otimes \mathcal J(h)\otimes 
f^*H^{\otimes m}) 
\to  H^i(X, K_X\otimes F\otimes \mathcal J(h)\otimes 
f^*H^{\otimes m+1})
\end{split} 
\end{equation*}
induced by $\otimes f^*t$ is not injective. 
This contradicts Theorem \ref{f-thmB}. 
Therefore $R^if_*(K_X\otimes F\otimes \mathcal J(h))$ 
is torsion-free. 
\end{step}

\begin{step}[Theorem \ref{f-thmB} $\Longrightarrow$ 
Theorem \ref{f-thmD}]\label{x-step1.9.3}
We use the induction on $\dim Y$. 
If $\dim Y=0$, then the statement is obvious. 
We take a sufficiently large positive integer $m$ and a general 
divisor $B\in |H^{\otimes m}|$ such that 
$D:=f^{-1}(B)$ is smooth, 
contains no associated primes of $\mathcal O_X/\mathcal J(h)$, 
and satisfies $\mathcal J(h|_D)=\mathcal 
J(h)|_D$ by Theorem \ref{x-thm3.6} (see Theorem \ref{f-thm1.10}) and 
Corollary \ref{x-cor3.11}. 
By the Serre vanishing theorem, we may further assume that 
\begin{equation}\label{x-eq4.4}
H^i(Y, R^jf_*(K_X\otimes F\otimes \mathcal J(h)\otimes N)
\otimes H^{\otimes m})=0
\end{equation} 
for every $i>0$ and $j$. 
By Corollary \ref{x-cor3.11} and adjunction, 
we have the following short exact sequence:  
\begin{equation}\label{x-eq4.5}
\begin{split}
0&\to K_X\otimes F\otimes \mathcal J(h)\otimes 
N
\to K_X\otimes F\otimes \mathcal J(h) \otimes 
N\otimes f^*H^{\otimes m}
\\& \to K_D\otimes F|_D\otimes \mathcal J(h|_D)\otimes N|_D
\to 0. 
\end{split}
\end{equation}
Since $B$ is a general member of $|H^{\otimes m}|$, 
we may assume that $B$ contains no associated primes 
of $R^jf_*(K_X\otimes F\otimes \mathcal J(h)\otimes N)$ for 
every $j$. Hence, by \eqref{x-eq4.5}, we can obtain 
\begin{equation*}
\begin{split}
0 &\to R^jf_*(K_X\otimes F\otimes \mathcal J(h)\otimes 
N) 
\to R^jf_*(K_X\otimes F\otimes \mathcal J(h)\otimes 
N)\otimes H^{\otimes m}\\& 
\to R^jf_*(K_D\otimes F|_D\otimes \mathcal J(h|_D)
\otimes N|_D)\to 0
\end{split} 
\end{equation*}
for every $j$. 
By using the long exact sequence and the induction 
on $\dim Y$, 
we obtain 
\begin{equation*}
\begin{split}
H^i(Y, R^jf_*(K_X\otimes F\otimes \mathcal J(h)\otimes N))
=H^i(Y, R^jf_*(K_X\otimes F\otimes \mathcal J(h)\otimes N)
\otimes H^{\otimes m})
\end{split}
\end{equation*} 
for every $i\geq 2$ and $j$. 
Thus we have 
\begin{equation}\label{x-eq4.6}
H^i(Y, R^jf_*(K_X\otimes F\otimes \mathcal J(h)\otimes N))=0
\end{equation}
for every $i\geq 2$ and $j$ by \eqref{x-eq4.4}. 
By Leray's spectral sequence, \eqref{x-eq4.4}, and \eqref{x-eq4.6}, 
we have the 
following commutative diagram: 
$$
\xymatrix{
H^1(Y, \mathcal S^j )\ar[d]_\alpha
\ar@{^{(}->}[r]& H^{j+1}(X, 
K_X\otimes F\otimes \mathcal J(h)\otimes N)
\ar@{^{(}->}[d]^{\beta}
\\ 
H^1(Y, \mathcal S^j \otimes H^{\otimes m}) \ar@{^{(}->}[r]& H^{j+1}(X, 
K_X\otimes F\otimes \mathcal J(h)\otimes N\otimes 
f^*H^{\otimes m})
}
$$
for every $j$, where $\mathcal S^j$ stands for 
$R^jf_*(K_X\otimes F\otimes \mathcal J(h)\otimes N)$. 
Since $\beta$ is injective by Theorem \ref{f-thmB}, 
we obtain that $\alpha$ is also injective. 
By \eqref{x-eq4.4}, we have 
\begin{equation*}
H^1(Y, R^jf_*(K_X\otimes F\otimes \mathcal J(h)\otimes N)
\otimes H^{\otimes m})=0
\end{equation*} 
for every $j$. 
Therefore, we have $H^1(Y, 
R^jf_*(K_X
\otimes F\otimes \mathcal J(h)\otimes N))=0$ for every $j$. 
Thus, we obtain the desired vanishing theorem in Theorem \ref{f-thmD}. 
\end{step}

\begin{step}[Theorems \ref{f-thmC} and \ref{f-thmD} 
$\Longrightarrow$ 
Theorem \ref{f-thmB}]\label{x-step1.9.4}
By replacing $s$ and $N_2$ with $s^{\otimes m}$ 
and $N_2^{\otimes m}$ for some positive integer $m$ 
(see also Remark \ref{f-rem1.5}), 
we may assume that $N_2$ is globally generated. We consider 
\begin{equation*}
f:=\Phi_{|N_2|}\colon X\to Y. 
\end{equation*}
Then $N_2\simeq f^*H$ for some ample line bundle 
$H$ on $Y$ and  $s=f^*t$ for some $t\in H^0(Y, H)$. 
We take a smooth Hermitian metric $h_1$ on $N_1$ such that 
$\sqrt{-1}\Theta_{h_1}(N_1)\geq 0$. 
Then $\sqrt{-1}\Theta_{hh_1}(F\otimes N_1)\geq 0$ and 
$\mathcal J(hh_1)=\mathcal J(h)$. 
By Theorem \ref{f-thmC}, we obtain that 
\begin{equation*}
R^if_*(K_X\otimes F\otimes \mathcal J(h)\otimes N_1)
\end{equation*} 
is torsion-free for every $i$. 
Therefore, the map 
\begin{equation*}
R^if_*(K_X\otimes F\otimes \mathcal J(h)\otimes 
N_1)\to 
R^if_*(K_X\otimes F\otimes \mathcal J(h)\otimes N_1)\otimes H
\end{equation*} 
induced by $\otimes t$ is injective for every $i$. 
By $N_2\simeq f^*H$, we see that 
\begin{equation}\label{x-eq4.7}
\begin{split}
H^0(Y, R^if_*(K_X\otimes F\otimes \mathcal J(h)\otimes 
N_1))\to H^0(Y, 
R^if_*(K_X\otimes F\otimes \mathcal J(h)\otimes N_1\otimes N_2))
\end{split}
\end{equation} 
induced by $\otimes t$ 
is injective for every $i$.  
By Theorem \ref{f-thmD}, \eqref{x-eq4.7} implies that 
\begin{equation*}
\begin{split}
H^i(X, K_X\otimes F\otimes \mathcal J(h)\otimes N_1)
 \to H^i(X, K_X\otimes F\otimes 
\mathcal J(h)\otimes N_1\otimes N_2)
\end{split}
\end{equation*} 
induced by $\otimes s$ is injective for every $i$. 
\end{step}

\begin{step}[Theorem \ref{f-thmD} 
$\Longrightarrow$ 
Theorem \ref{f-thmE}]\label{x-step1.9.5}
The following lemma implies that 
$R^jf_*(K_X\otimes F\otimes \mathcal J(h))$ is 
a GV-sheaf by \cite[Theorem 25.5]{schnell} (see also 
\cite{hacon} and \cite{pp}).
For simplicity, we put 
$\mathcal F^j:=R^jf_*(K_X\otimes F\otimes \mathcal J(h))$ 
for every $j$. 
\begin{lem}\label{f-lem4.1}
For every finite \'etale morphism $p\colon B\to A$ of 
Abelian varieties and every ample line bundle $H$ on $B$, we have 
\begin{equation}\label{x-eq4.8}
H^i(B, H\otimes p^*\mathcal F^j)=0
\end{equation} 
for every $i>0$ and $j$. 
\end{lem}
\begin{proof}[Proof of Lemma \ref{f-lem4.1}]
We put $Z:=B\times _A X$. 
Then we have the following commutative diagram. 
\begin{equation}\label{x-eq4.9}
\xymatrix{
Z \ar[r]^q\ar[d]_g& X\ar[d]^f\\ 
B \ar[r]_p & A
}
\end{equation}
By construction, $q$ is also finite and \'etale. 
Therefore, we have $q^*K_X=K_Z$ and 
$q^*\mathcal J(h)=\mathcal J(q^*h)$. 
By the flat base change theorem, 
\begin{equation*}
p^*R^jf_*(K_X\otimes F\otimes \mathcal J(h))\simeq 
R^jg_*(K_Z\otimes q^*F 
\otimes \mathcal J(q^*h)). 
\end{equation*}
By Theorem \ref{f-thmD}, we obtain the desired vanishing 
\eqref{x-eq4.8}. 
\end{proof} 
\end{step}

\begin{step}[Theorems \ref{f-thmC} and \ref{f-thmE} 
$\Longrightarrow$ 
Theorem  \ref{f-thmF}]\label{x-step1.9.6}
By Theorem \ref{f-thmC}, we have 
$\mathcal F^j:=R^jf_*(K_X\otimes F\otimes \mathcal J(h))=0$ 
for $j>\dim X-\dim f(X)$. 
We consider the following spectral sequence: 
\begin{equation*}
E_2^{pq}=H^p(A, \mathcal F^q \otimes L)\Rightarrow 
H^{p+q}(X, K_X\otimes F\otimes \mathcal J(h)\otimes f^*L)
\end{equation*} 
for every $L\in \Pic ^0(A)$. 
Note that $\mathcal F^j$ is a GV-sheaf for every $j$ and 
that $\mathcal F^j=0$ for $j>\dim X-\dim f(X)$. 
Then we obtain 
\begin{equation*}
\mathrm{codim}_{\Pic^0(A)}\{L\in \Pic^0(A) \, 
|\, H^i(X, K_X\otimes F\otimes \mathcal J(h)\otimes 
f^*L)\ne 0\} 
\geq i -(\dim X -\dim f(X)) 
\end{equation*}
for every $i\geq 0$. 
\end{step}
We completed the proof of Proposition \ref{f-prop1.9}. 
\end{proof}

We prove Corollary \ref{f-cor1.7} as an application of 
Theorem \ref{f-thmD}. 

\begin{proof}[Proof of Corollary \ref{f-cor1.7}
{\textit{(Theorem \ref{f-thmD} $\Longrightarrow$ Corollary \ref{f-cor1.7})}}]
By Theorem \ref{f-thmD}, 
we have 
\begin{equation*}
H^p(Y, R^if_*(K_X\otimes F\otimes \mathcal J(h))\otimes 
H^{\otimes m-p})=0
\end{equation*}
for every $p\geq 1$, $i\geq 0$, and $m\geq 
\dim Y+1$. Thus, the Castelnuovo--Mumford regularity 
(see \cite[Section 1.8]{lazarsfeld-book1}) 
implies that $R^if_*(K_X\otimes F\otimes \mathcal J(h))\otimes 
H^{\otimes m}$ is globally generated for every $i\geq 0$ and 
$m\geq \dim Y+1$. 
\end{proof}

We close this section with 
a proof of Theorem \ref{f-thm1.4} based on Theorem \ref{f-thmA} 
for the reader's convenience. 

\begin{proof}[Proof of Theorem \ref{f-thm1.4} 
{\textit{(Theorem \ref{f-thmA} $\Longrightarrow$ Theorem \ref{f-thm1.4})}}]  
Let $A$ be an ample line bundle on $V$. 
Then there exists a sufficiently large positive integer $m$ such that 
$A^{\otimes m}$ is very ample and that 
$H^i(V, K_V\otimes L\otimes \mathcal J(h_L)\otimes 
A^{\otimes m})=0$ for every $i>0$ by the Serre vanishing 
theorem. 
We can take a smooth Hermitian metric $h_A$ on $A$ such that 
$\sqrt{-1}\Theta_{h_A}(A)$ is a smooth positive $(1, 1)$-form on $V$. 
Therefore, we have 
$\sqrt{-1}\Theta_{h^m_A}(A^{\otimes m})\geq 0$. 
By the condition $\sqrt{-1}\Theta_{h_L}(L)\geq \varepsilon \omega$, 
we see that $\sqrt{-1}(\Theta_{h_L}(L)-t\Theta_{h^m_A}
(A^{\otimes m}))\geq 0$ for some $0<t\ll 1$. 
We take a nonzero global section $s$ of $A^{\otimes m}$. 
By Theorem \ref{f-thmA}, we see that 
\begin{equation*}
\times s\colon H^i(V, K_V\otimes L\otimes \mathcal J(h_L))
\to H^i(V, K_V\otimes L\otimes \mathcal J(h_L)\otimes 
A^{\otimes m})
\end{equation*}
is injective for every $i$. 
Thus, we obtain that $H^i(V, K_V\otimes L\otimes 
\mathcal J(h_L))=0$ for every $i>0$. 
\end{proof}

\section{Proof of Theorem \ref{f-thmA}}\label{f-sec5} 

In this section, we will give the proof of Theorem \ref{f-thmA}. 

\begin{thm}[Theorem \ref{f-thmA}]\label{f-thm5.1}
Let $F$ $($resp.~$M$$)$ 
be a line bundle on a compact K\"ahler manifold $X$ 
with a singular Hermitian 
metric $h$ $($resp.~a smooth Hermitian metric $h_M$$)$ satisfying 
\begin{align*}
\sqrt{-1}\Theta_{h_M} (M) \geq 0 \text{ and }
\sqrt{-1}\Theta_{h}(F) - b \sqrt{-1}\Theta_{h_M} (M) \geq 0 
\text{ for some $b>0$}. 
\end{align*}
Then for a $($nonzero$)$ section $s \in H^{0}(X, M)$, 
the multiplication map induced by $\otimes s$ 
\begin{equation*}
\times s\colon H^{q}(X, K_X \otimes F \otimes \mathcal J(h)) 
\xrightarrow{\quad \otimes s \quad } 
H^{q}(X, K_X \otimes F \otimes \mathcal J(h) \otimes M )
\end{equation*}
is injective for every $q$. 
Here $K_X$ is the canonical bundle of $X$ 
and $\mathcal J(h)$ is the multiplier ideal sheaf of $h$. 
\end{thm}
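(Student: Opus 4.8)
The plan is to follow the classical Enoki strategy — represent cohomology by $L^2$ harmonic forms and exploit the Bochner–Kodaira–Nakano identity — but carried out on the noncompact manifold obtained by deleting the singular locus of $h$, so that the only serious work is in handling the arbitrary singularities of $h$ by approximation. First I would fix a K\"ahler form $\omega$ on $X$, let $Z$ denote the singular locus of the weight of $h$, and invoke Demailly's regularization to replace $h$ by a decreasing sequence of singular hermitian metrics $h_\e$ on $F$ that are smooth outside an analytic set $Z_\e$, satisfy $\sqrt{-1}\Theta_{h_\e}(F)\geq -\e\omega$, and whose multiplier ideals increase to $\I{h}$. On $X\setminus Z$ I would work with a family of complete K\"ahler metrics $\ome\geq \omega$ and consider the Hilbert spaces of $\omega$-bounded, $h$-$L^2$ harmonic $(n,q)$-forms with values in $F$; the section $s$ acts by $\otimes s$, and this is well defined at the $L^2$ level since $|s|^2_{h_M}$ is bounded on the compact $X$, so $|su|^2_{hh_M}=|s|^2_{h_M}|u|^2_h$ is $(hh_M)$-integrable whenever $|u|^2_h$ is $h$-integrable.

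The first key step is a de Rham--Weil type isomorphism identifying $H^q(X,\omega_X\otimes F\otimes \I{h})$ with this space $\mathcal H^{n,q}$ of harmonic forms; here both the twist by $\I{h}$ and the completeness of $\ome$ are essential, and one proves it by combining the $L^2$ Dolbeault resolution of $\omega_X\otimes F\otimes \I{h}$ with Hodge-type orthogonal decompositions for the approximating data $(h_\e,\ome)$ together with a weak-limit argument, for which Lemmas \ref{f-lem2.5} and \ref{f-lem2.6} are tailor-made. Granting the isomorphism, take $u\in\mathcal H^{n,q}$ representing a class in the kernel of $\times s$, so that $su$ is $\dbar$-exact as an $(n,q)$-form with values in $F\otimes M$ and is $(hh_M)$-$L^2$ by the remark above. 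I will then show $su$ is actually $(hh_M)$-harmonic, whence $su\equiv 0$ because a $\dbar$-exact harmonic form vanishes, and therefore $u\equiv 0$ since $s\not\equiv 0$ and $X$ is connected — giving the injectivity.

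The harmonicity of $su$ is where the two curvature hypotheses enter, via a pointwise Bochner analysis. Since $u$ is harmonic and $\sqrt{-1}\Theta_h(F)\geq 0$ in the limit, the Bochner--Kodaira--Nakano identity on the K\"ahler manifold forces $D'_hu=0$, $D'^{*}_hu=0$, and the curvature term $[\sqrt{-1}\Theta_h(F),\Lambda_\omega]u=0$ pointwise. From $\sqrt{-1}(\Theta_h(F)-b\,\Theta_{h_M}(M))\geq 0$ and $\sqrt{-1}\Theta_{h_M}(M)\geq 0$ one gets, as quadratic forms on $(n,q)$-forms, $0\leq b\,\langle[\sqrt{-1}\Theta_{h_M}(M),\Lambda_\omega]u,u\rangle\leq\langle[\sqrt{-1}\Theta_h(F),\Lambda_\omega]u,u\rangle=0$, so $[\sqrt{-1}\Theta_{h_M}(M),\Lambda_\omega]u=0$ as well, hence the full curvature operator for $F\otimes M$ annihilates $u$ and therefore also $su$ (it commutes with $\otimes s$, as for line bundles it only acts on the form part). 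Using that $s$ is holomorphic and that $(D'_{h_M}s)\wedge u$ is an $(n+1,q)$-form, hence zero, one finds $D'_{hh_M}(su)=s\,D'_hu=0$; feeding $\dbar(su)=0$, $D'_{hh_M}(su)=0$ and the curvature vanishing into the Bochner--Kodaira--Nakano identity for $(F\otimes M,hh_M)$ yields $\dbar^{*}_{hh_M}(su)=0$, i.e.\ $su$ is harmonic. Finally I would transport the conclusion $u\equiv 0$ back to the original $h$ through the regularization, once more by extracting weak limits and using Lemmas \ref{f-lem2.5} and \ref{f-lem2.6}.

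The hard part will be the first step — producing the harmonic representation of $H^q(X,\omega_X\otimes F\otimes\I{h})$ for a metric $h$ with completely arbitrary, possibly nonalgebraic, singularities — together with making the whole Enoki argument survive the double limiting process (in $\e$ and in the regularization parameter of $\ome$): one must construct harmonic representatives with uniformly controlled $L^2$ norms, extract weak limits that remain $\dbar$-closed and represent the correct cohomology class, solve auxiliary $\dbar$-equations with $L^2$-estimates to compare the resolutions, and check that the pointwise curvature identities and the vanishing $su=0$ all persist in the limit. This is precisely the point at which the refinement of the techniques of \cite{matsumura4} is needed, and it is the technical heart of the proof.
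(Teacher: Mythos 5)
Your overall scaffolding (Demailly regularization $h_\e$, complete K\"ahler metrics on the complement of the singular set, harmonic decomposition, Bochner--Kodaira--Nakano, weak limits) matches the paper's, but the main line of your argument is the \emph{smooth} Enoki argument, and it breaks down at exactly the point the theorem is about. First, the space $\mathcal H^{n,q}$ of ``$h$-$L^2$ harmonic forms'' on $X\setminus Z$ that you want to work in does not exist in any usable sense for arbitrary singular $h$: the unbounded locus $Z$ of the weight of $h$ is not an analytic set in general (it can be a dense uncountable set, cf.\ Example \ref{f-ex3.12}), $h$ need not be smooth, or even continuous, anywhere, so $\dbar^{*}_{h}$ and the pointwise curvature operator $[\sqrt{-1}\Theta_h(F),\Lambda_\omega]$ are not defined --- the curvature is only a current. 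This is precisely why the paper never forms harmonic spaces for $h$ itself; it only uses the spaces $\mathcal H^{n,q}_{\e,\delta}(F)$ for the regularized metrics $h_\e$ on $Y_\e=X\setminus Z_\e$ and recovers information about $h$ through a double weak-limit procedure (Propositions \ref{f-prop5.4}--\ref{f-prop5.7}).

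Second, and more decisively, after regularization the curvature bound is only $\sqrt{-1}\Theta_{h_\e}(F)\geq b\sqrt{-1}\Theta_{h_M}(M)-\e\omega$, so the Bochner identity \eqref{eq5.4} gives $\|D'^{*}_{\e,\delta}u_{\e,\delta}\|^2$ and the positive part of the curvature term bounded by $\e q\|u_{\e,\delta}\|^2$ --- quantities that tend to $0$ but are never $0$ at any finite stage. Hence $su_{\e,\delta}$ is never exactly harmonic, only $\|\dbar^{*}_{\e,\delta}(su_{\e,\delta})\|_{\e,\delta}\to 0$ (Proposition \ref{f-prop5.8}), and your step ``$su$ is harmonic and $\dbar$-exact, hence $su\equiv 0$'' has no finite-stage analogue and cannot simply be passed to the limit. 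The paper replaces it by the quantitative identity $\|su_{\e,\delta}\|^2_{\e,\delta}=\lla\dbar^{*}_{\e,\delta}su_{\e,\delta},v_{\e,\delta}\rra_{\e,\delta}$, which forces one to produce solutions $v_{\e,\delta}$ of $\dbar v_{\e,\delta}=su_{\e,\delta}$ with $\varlimsup_{\delta\to 0}\|v_{\e,\delta}\|_{\e,\delta}$ bounded \emph{independently of} $\e$ (Propositions \ref{f-prop5.9} and \ref{f-prop5.10}). Constructing these --- via the \v{C}ech--Dolbeault comparison, Montel's theorem, and the open mapping theorem for Fr\'echet spaces, starting from the hypothesis that the class of $su$ vanishes --- is the technical heart of the proof, and your proposal only gestures at it in the closing paragraph without supplying the mechanism. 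As written, the argument has a genuine gap there.
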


\begin{proof}[Proof of Theorem \ref{f-thm5.1} $($Theorem \ref{f-thmA}$)$] 
The proof can be divided into four steps. 
\setcounter{step}{0}
\begin{step}\label{f-st1}
Throughout the proof, we fix a K\"ahler form  $\omega$ on $X$. 
For a given singular Hermitian metric $h$ on $F$, 
by applying \cite[Theorem 2.3]{dps} to the weight of $h$,  
we obtain a family of singular Hermitian metrics 
$\{h_{\e} \}_{1\gg \e>0}$ on $F$ with 
the following properties: 
\begin{itemize}
\item[(a)]$h_{\e}$ is smooth on $Y_{\e}:=X \setminus Z_{\e}$, 
where $Z_{\e}$ is a proper closed analytic subset on $X$. 
\item[(b)]$h_{\e'} \leq h_{\e''} \leq h$ holds on $X$ 
when $\e' > \e'' > 0$.
\item[(c)]$\I{h}= \I{h_{\e}}$ on $X$.
\item[(d)]$\sqrt{-1} \Theta_{h_{\e}}(F) 
\geq b\sqrt{-1}\Theta_{h_{M}}(M) -\e \omega$ on $X$. 
\end{itemize}
Here property (d) is obtained from the assumption 
$\sqrt{-1}\Theta_{h}(F) \geq b \sqrt{-1}\Theta_{h_M} (M)$. 

The main difficulty of the proof is that $Z_{\e}$ may essentially depend on $\e$, 
compared to \cite{matsumura4} in which $Z_{\e}$ is independent of $\e$. 
To overcome this difficulty, 
we consider suitable complete K\"ahler forms 
$\{\omega_{\e, \delta}\}_{\delta>0}$ on $Y_{\e}$ 
such that $\omega_{\e, \delta}$ converges to 
$\omega$ as $\delta \to 0$. 
To construct such complete K\"ahler forms, 
we first take a complete K\"ahler form  $\omega_{\e}$ on $Y_{\e}$ 
with the following properties: 
\begin{itemize}
\item[$\bullet$] $\omega_{\e}$ is a complete K\"ahler form on 
$Y_{\e}$.
\item[$\bullet$] $\omega_{\e} \geq \omega $ on $Y_{\e}$. 
\item[$\bullet$] $\omega_{\e}=\deldel \Psi_{\e}$ 
for some bounded function $\Psi_{\e}$ 
on a neighborhood of every $p \in X$.    
\end{itemize} 
See \cite[Section 3]{fujino-osaka} for the construction of $\omega_{\e}$. 
For the K\"ahler form $\omega_{\e, \delta}$ on $Y_{\e}$ defined to be 
$$
\omega_{\e, \delta}:=\omega + \delta \omega_{\e} 
\text{ for } \e \text{ and } \delta  \text{ with } 0<\delta \ll \e, 
$$
it is easy to see the following properties hold: 
\begin{itemize}
\item[(A)] $\omega_{\e, \delta}$ is a 
complete K\"ahler form on $Y_{\e}=X\setminus Z_{\e}$ for every 
$\delta>0$.
\item[(B)] $\omega_{\e, \delta} \geq \omega $ on $Y_{\e}$ 
for every  $\delta>0$. 
\item[(C)] $\Psi + \delta \Psi_{\e}$ is a bounded local potential function of 
$\omega_{\e, \delta}$ and converges to $\Psi$ as $\delta \to 0$.  
\end{itemize} 
Here $\Psi$ is a local potential function of $\omega$. 
The first property enables us to consider harmonic forms 
on the noncompact $Y_{\e}$, 
and the third property enables us to construct the de Rham--Weil 
isomorphism 
from the $\dbar$-cohomology on $Y_{\e}$ to 
the $\rm{\check{C}}$ech cohomology on $X$. 

\begin{rem}\label{f-rem5.2} 
In the proof of Theorem \ref{f-thm5.1}, 
we actually consider only a countable sequence $\{\e_{k}\}_{k=1}^{\infty}$ 
(resp.~$\{\delta_{\ell}\}_{\ell=1}^{\infty}$) conversing to zero since 
we need to apply Cantor's diagonal argument, 
but we often use the notation $\e$ (resp.~$\delta$) for simplicity. 
\end{rem}

For the proof, it is sufficient to show that 
an arbitrary cohomology class 
$\eta \in H^{q}(X, K_X\otimes F \otimes \I{h})$ 
satisfying $s \eta = 0 \in H^{q}(X, K_X\otimes F \otimes \I{h} \otimes M)$ 
is actually zero. 
We represent the cohomology 
class $\eta \in H^{q}(X, K_X\otimes F \otimes \I{h})$ 
by a $\dbar$-closed $F$-valued $(n,q)$-form $u$ 
with $\| u \|_{h, \omega} < \infty$ 
by using the standard de Rham--Weil isomorphism 
\begin{equation*}
H^{q}(X, K_X\otimes F \otimes \I{h})
\cong 
\frac{\Ker\dbar\colon L^{n,q}_{(2)}(F)_{h,\omega}
\to L^{n,q+1}_{(2)}(F)_{h,\omega}}
{\Image\dbar\colon L^{n,q-1}_{(2)}(F)_{h,\omega}
\to L^{n,q}_{(2)}(F)_{h,\omega}}. 
\end{equation*}
Here $\dbar$ is the densely defined closed operator 
defined by the usual $\dbar$-operator 
and $L^{n,q}_{(2)}(F)_{h,\omega}$ is   
the $L^2$-space of $F$-valued $(n,q)$-forms on $X$ 
with respect to the $L^2$-norm $\|\bullet \|_{h, \omega}$ defined by 
$$
\|\bullet \|^2_{h, \omega}:= \int_{X} |\bullet |^2_{h, \omega}\, dV_{\omega}, 
$$
where $dV_{\omega}:=\omega^n /n!$ and $n:=\dim X$. 
Our purpose is 
to prove that $u$ is $\dbar$-exact 
(namely, $u \in \Image\dbar \subset L^{n,q}_{(2)}(F)_{h,\omega}$) 
under the assumption that the cohomology class of $s u$ is zero in 
$H^{q}(X, K_X\otimes F \otimes \I{h} \otimes M)$. 

From now on, we mainly consider the $L^{2}$-space 
$L^{n,q}_{(2)}(Y_{\e},F)_{h_{\e},\omega_{\e, \delta}}$ 
of $F$-valued $(n,q)$-forms on $Y_{\e}$ (not $X$) with respect to 
$h_{\e}$ and $\omega_{\e, \delta}$ (not $h$ and $\omega$). 
For simplicity we put
$$
L^{n,q}_{(2)}(F)_{\e, \delta}:= L^{n,q}_{(2)}(Y_{\e},F)_{h_{\e},\omega_{\e, \delta}}
\text{\quad and \quad}
\|\bullet\|_{\e, \delta}:=\|\bullet\|_{h_{\e}, \omega_{\e, \delta}}. 
$$
The following inequality plays an important role in the proof.  
\begin{align}\label{eq5.1}
\|u\|_{\e, \delta} \leq \|u\|_{h, \omega_{\e, \delta}} \leq \|u\|_{h, \omega} <\infty.  
\end{align}
In particular, the norm $\|u\|_{\e, \delta}$ is uniformly bounded 
since the right hand side is independent of $\e$, $\delta$. 
The first inequality follows from property (b) of $h_{\e}$, 
and the second inequality follows from 
Lemma \ref{f-lem2.4} and property (B) of $\omega_{\e, \delta}$. 
Strictly speaking, the left hand side should be 
$\|u|_{Y_{\e}}\|_{\e, \delta}$, 
but we often omit the symbol of restriction. 
Now we have the following orthogonal decomposition 
(for example see \cite[Proposition 5.8]{matsumura4}).  
$$
L^{n,q}_{(2)}(F)_{\e, \delta}= 
\Image\dbar \,  \oplus 
\mathcal{H}^{n,q}_{\e, \delta}(F)\, \oplus 
\Image \dbar^{*}_{\e, \delta}. 
$$
Here $\dbar^{*}_{\e, \delta}$ is (the maximal extension of) 
the formal adjoint of the $\dbar$-operator 
and $\mathcal{H}^{n,q}_{\e, \delta}(F)$ is the set of 
harmonic $F$-valued $(n,q)$-forms on $Y_{\e}$, namely    
$$
\mathcal{H}^{n,q}_{\e, \delta}(F):= 
\{ w \in L^{n,q}_{(2)}(F)_{\e, \delta} \, | \, 
\dbar w=0 \text{ and } \dbar^{*}_{\e, \delta}w=0 \}. 
$$

\begin{rem}\label{f-rem5.3}
The formal adjoint coincides with the Hilbert space adjoint 
since $\omega_{\e, \delta}$ is complete for 
$\delta >0$ (see, for example, \cite[(3.2) Theorem 
in Chapter  V\hspace{-.1em}I\hspace{-.1em}I\hspace{-.1em}I]{demailly-note}). 
The $\dbar$-operator also depends on $h_{\e}$ and $\omega_{\e, \delta}$ 
in the sense that the domain and range 
of the closed operator $\dbar$ depend on them, 
but we abbreviate $\dbar_{\e, \delta}$ to $\dbar$.
\end{rem}

The $F$-valued $(n,q)$-form $u$ (representing $\eta$) 
belongs to $L^{n,q}_{(2)}(F)_{\e,\delta}$ by \eqref{eq5.1}, 
and thus $u$ can be decomposed as follows:  
\begin{align}\label{eq5.2}
u=\dbar w_{\e, \delta}+u_{\e, \delta}\quad 
\text{for some } 
w_{\e, \delta} \in {\rm{Dom}\, \dbar} \subset 
L^{n,q-1}_{(2)}(F)_{\e,\delta}\text{ and } 
u_{\e, \delta} \in \mathcal{H}^{n,q}_{\e, \delta}(F).   
\end{align}
Note that the orthogonal projection of $u$ to 
${\Image \dbar^{*}_{\e, \delta}}$ 
must be zero since $u$ is $\dbar$-closed.
\end{step}

\begin{step}\label{f-st2}
The purpose of this step is to prove Proposition \ref{f-prop5.7}, 
which reduces the proof to the study of the asymptotic behavior 
of the norm of $su_{\e,\delta}$. 
When we consider a suitable limit of $u_{\e,\delta}$ 
in the following proposition,  
we need to carefully choose the $L^2$-space 
since the $L^2$-space $L^{n,q}_{(2)}(F)_{\e,\delta}$ 
depends on $\e$ and $\delta$. 
We remark that $\{\e\}_{\e>0}$ and $\{\delta\}_{\delta>0}$ 
denote countable sequences converging to zero (see Remark \ref{f-rem5.2}). 
Let $\{\delta_{0}\}_{\delta_{0}>0}$ 
denote another countable sequence  converging to zero. 

\begin{prop}\label{f-prop5.4}
There exist a subsequence $\{\delta_{\nu}\}_{\nu=1}^{\infty}$ of 
$\{\delta\}_{\delta>0}$ and 
$\alpha_{\e} \in L^{n,q}_{(2)}(F)_{h_{\e}, \omega}$ with 
the following properties: 
\begin{itemize}
\item[$\bullet$] For any $\e, \delta_{0}>0$, 
as $\delta_{\nu}$ tends to $0$, 
$$
u_{\e, \delta_{\nu}} \text{ converges to } \alpha_{\e} 
\text{ with respect to the weak } L^2\text{-topology in } 
L^{n,q}_{(2)}(F)_{\e,  \delta_{0}}. 
$$
\item[$\bullet$] For any $\e>0$, 
$$
\| \alpha_{\e} \|_{h_{\e},\omega}
\leq \varliminf_{ \delta_{0} \to 0}\| \alpha_{\e} \|_{\e, \delta_{0}}
\leq \varliminf_{\delta_{\nu} \to 0}\| u_{\e, \delta_{\nu}} \|_{\e,\delta_{\nu}}
\leq \|u\|_{h, \omega}. 
$$
\end{itemize}
\end{prop}

\begin{rem}\label{f-rem5.5} 
The weak limit $\alpha_{\e}$ does not depend on  $\delta_{0}$,  
and the subsequence $\{\delta_{\nu}\}_{\nu=1}^{\infty}$ 
does not depend on $\e$ and $\delta_{0}$. 
\end{rem}

\begin{proof}[Proof of Proposition \ref{f-prop5.4}]
For given $\e, \delta_{0}>0$, 
by taking a sufficiently small $\delta$ with $0<\delta<\delta_{0}$, 
we have 
\begin{align}\label{eq5.3}
\|u_{\e, \delta}\|_{\e, \delta_{0}}
\leq \|u_{\e, \delta}\|_{\e, \delta}
\leq \|u\|_{\e, \delta} 
\leq \|u\|_{h, \omega}. 
\end{align}
The first inequality follows from 
$\omega_{\e, \delta} \leq \omega_{\e, \delta_{0}}$ and Lemma \ref{f-lem2.4}, 
the second inequality follows since $u_{\e, \delta}$ is 
the orthogonal projection of $u$ with respect to $\e, \delta$, 
and the last inequality follows from \eqref{eq5.1}. 
Since the right hand side is independent of $\delta$, 
the family $\{u_{\e, \delta}\}_{\delta>0}$ is 
uniformly bounded in $L^{n,q}_{(2)}(F)_{\e,  \delta_{0}}$.
Therefore, there exists a subsequence $\{\delta_{\nu}\}_{\nu=1}^{\infty}$ of 
$\{\delta\}_{\delta>0}$ such that 
$u_{\e, \delta_{\nu}}$ converges to 
$ \alpha_{\e,\delta_{0}}$ 
with respect to the weak $L^2$-topology in $L^{n,q}_{(2)}(F)_{\e,  \delta_{0}}$ 
This subsequence 
$\{\delta_{\nu}\}_{\nu=1}^{\infty}$ may depend on $\e, \delta_{0}$, 
but we can choose a subsequence independent of them  
by applying Cantor's diagonal argument. 

Now we show that $\alpha_{\e, \delta_{0}}$ does not depend on $\delta_{0}$. 
For arbitrary $\delta'_0, \delta''_0$ 
with $0 < \delta'_0\leq \delta''_0$, 
the natural inclusion $L^{n,q}_{(2)}(F)_{\e,  \delta'_0}\to 
L^{n,q}_{(2)}(F)_{\e,  \delta''_0}$ is a bounded operator 
(continuous linear map) 
by $\|\bullet \|_{\e,  \delta''_0} \leq \|\bullet \|_{\e,  \delta'_0}$, 
and thus $u_{\e, \delta_{\nu}}$ weakly converges to $\alpha_{\e, \delta'_0}$ 
in not only $L^{n,q}_{(2)}(F)_{\e,  \delta'_0}$ but also 
$L^{n,q}_{(2)}(F)_{\e,  \delta''_0}$ by Lemma \ref{f-lem2.5}. 
Therefore, it follows that $\alpha_{\e,\delta'_0}=\alpha_{\e,\delta''_0}$
since the weak limit is unique. 

Finally, we consider the norm of $\alpha_{\e}$. 
It is easy to see that 
$$
\| \alpha_{\e} \|_{\e, \delta_{0}}
\leq \varliminf_{\delta_{\nu} \to 0}\| u_{\e, \delta_{\nu}} \|_{\e,\delta_{0}}
\leq \varliminf_{\delta_{\nu} \to 0}\| u_{\e, \delta_{\nu}} \|_{\e,\delta_{\nu}}
\leq \|u\|_{h, \omega}. 
$$
The first inequality follows since the norm is 
lower semicontinuous with respect to the weak convergence, 
the second inequality follows from 
$\omega_{\e, \delta_{0}} \geq \omega_{\e, \delta_{\nu}}$, 
and the last inequality follows from \eqref{eq5.3}. 
Fatou's lemma yields 
$$
\| \alpha_{\e} \|^2_{h_{\e}, \omega} 
=\int_{Y_{\e}} | \alpha_{\e} |^2_{h_{\e}, \omega}\,dV_{\omega}
\leq \varliminf_{\delta_{0} \to 0} 
\int_{Y_{\e}} | \alpha_{\e} |^2_{h_{\e}, 
\omega_{\e, \delta_{0}}}\,dV_{\omega_{\e, \delta_{0}}}
=\varliminf_{\delta_{0} \to 0} \| \alpha_{\e} \|^2_{\e, \delta_{0}}. 
$$
These inequalities lead to the desired estimate in the proposition. 
\end{proof}

For simplicity, 
we use the same notation $\{u_{\e, \delta}\}_{\delta>0}$ for 
the subsequence 
$\{u_{\e, \delta_{\nu}}\}_{\nu=1}^{\infty}$ in Proposition \ref{f-prop5.4}. 
We fix $\e_{0}>0$ and consider the weak limit of $\alpha_{\e}$ in 
the fixed $L^2$-space $L^{n,q}_{(2)}(F)_{h_{\e_{0}},\omega}$. 
For a sufficiently small $\e>0$, we have 
$$
\| \alpha_{\e} \|_{h_{\e_{0}}, \omega } 
\leq \| \alpha_{\e} \|_{h_{\e}, \omega }
\leq \|u\|_{h,\omega} 
$$
by property (b) and Proposition \ref{f-prop5.4}. 
By taking a subsequence of $\{\alpha_{\e} \}_{\e>0}$, 
we may assume that 
$\alpha_{\e} $ 
weakly converges to some $\alpha$ in 
$L^{n,q}_{(2)}(F)_{h_{\e_{0}},\omega}$. 

\begin{prop}\label{f-prop5.6}
If the weak limit $\alpha$ is zero in 
$L^{n,q}_{(2)}(F)_{h_{\e_{0}},\omega}$, 
then the cohomology class $\eta$ is zero in 
$H^{q}(X, K_X\otimes F \otimes \I{h})$. 
\end{prop}
\begin{proof}[Proof of Proposition \ref{f-prop5.6}]
For every $\delta$ with $0< \delta \leq \delta_{0}$, 
we can easily check 
\begin{align*}
u-u_{\e,\delta} \in \Image\dbar \text{ in } 
L^{n,q}_{(2)}(F)_{\e, \delta}
&\subset  \Image\dbar \text{ in } 
L^{n,q}_{(2)}(F)_{\e, \delta_{0}}
\end{align*} 
from the construction of $u_{\e,\delta}$. 
As $\delta \to 0$, we obtain 
\begin{align*}
u - \alpha_{\e} & \in \Image \dbar \text{ in }  
L^{n,q}_{(2)}(F)_{\e, \delta_{0}}  
\end{align*}
by Lemma \ref{f-lem2.6} and Proposition \ref{f-prop5.4}.  
We remark that $\Image\dbar$ is a closed subspace 
(see \cite[Proposition 5.8]{matsumura4}). 
On the other hand, 
we have the following commutative diagram: 
\[\xymatrix{
&\Ker \dbar \text{ in } L^{n,q}_{(2)}(F)_{\e, \delta_{0}}  
\ar[r]^{q_{1}\ \ }&  
\dfrac{\Ker \dbar}{\Image \dbar} 
\text{ of }  L^{n,q}_{(2)}(F)_{\e, \delta_{0}} 
\ar[r]^{\cong\ \ \ }_{f_{1}\ \ }
&\check{H}^{q}(X, K_X\otimes F \otimes \I{h})\\
&\Ker \dbar \text{ in } L^{n,q}_{(2)}(F)_{h_{\e}, \omega}
\ar[u]_{j_{1}} \ar[r]^{j_{2}\ \ } 
&\Ker \dbar \text{ in } L^{n,q}_{(2)}(F)_{h_{\e_{0}}, 
\omega} \ar[r]^{q_{2}\ \ }  
&\dfrac{\Ker \dbar}{\Image \dbar} 
\text{ of } L^{n,q}_{(2)}(F)_{h_{\e_{0}}, \omega}
\ar[u]^{\cong}_{f_2}.
}\]
Here $j_{1}$, $j_{2}$ are the natural inclusions, 
$q_{1}$, $q_{2}$ are the natural quotient maps, 
and $f_1$, $f_2$ are the de Rham--Weil isomorphisms 
(see \cite[Proposition 5.5]{matsumura4} for the construction). 
Strictly speaking, $f_{1}$ is an isomorphism to 
$\check{H}^{q}(X, K_X\otimes F \otimes \I{h_{\e}})$, 
but which coincides with $\check{H}^{q}(X, K_X\otimes F \otimes \I{h})$
by property (c). 
To check that $j_{2}$ is well-defined,  
we have to see that $\dbar w=0$ on $Y_{\e_{0}}$ if $\dbar w=0$ on $Y_{\e}$. 
By the $L^2$-integrability 
and \cite[(7.3) Lemma, 
Chapter V\hspace{-.1em}I\hspace{-.1em}I\hspace{-.1em}I]{demailly-note}, 
the equality $\dbar w=0$ can be 
extended from $Y_{\e}$ to $X$ (in particular $Y_{\e_{0}}$). 
The key point here is 
the $L^2$-integrability with respect to $\omega$ 
(not $\omega_{\e,\delta}$). 

Since $j_{2}(u-\alpha_{\e})$ weakly 
converges to $j_{2}(u-\alpha)$ 
and the $\dbar$-cohomology is finite dimensional, 
we obtain 
$$
\lim_{\e \to 0}q_{2}(u-\alpha_{\e})
=q_{2}(u-\alpha)=q_{2}(u)   
$$
by Lemma \ref{f-lem2.5} and the assumption $\alpha =0$.
On the other hand, it follows that 
$q_{1}(u-\alpha_{\e})=0$
from the first half argument. 
Hence, we have $q_{2}(u)=0$, that is, 
$u \in \Image \dbar \subset L^{n,q}_{(2)}(F)_{h_{\e_{0}}, \omega}$. 
From $q_{2}(u)=0$, 
we can prove the conclusion, 
that is, $u \in \Image \dbar \subset L^{n,q}_{(2)}(F)_{h, \omega}$. 
Indeed, we can obtain $q_{3}(u)=0$ (which leads to the conclusion)  
by the following commutative diagram: 
\[\xymatrix{
&\Ker \dbar \text{ in } L^{n,q}_{(2)}(F)_{h_{\e_{0}},\omega}  
\ar[r]^{q_{2}\ \ }&  
\dfrac{\Ker \dbar}{\Image \dbar} 
\text{ of }  L^{n,q}_{(2)}(F)_{h_{\e_{0}},\omega} 
\ar[r]^{\cong\ \ \ }_{f_{2}\ \ }
&\check{H}^{q}(X, K_X\otimes F \otimes \I{h_{\e_{0}}})\\
&\Ker \dbar \text{ in } L^{n,q}_{(2)}(F)_{h, \omega}
\ar[u]_{} \ar[r]^{q_{3}\ \ } 
&\dfrac{\Ker \dbar}{\Image \dbar} 
\text{ of }  L^{n,q}_{(2)}(F)_{h, \omega}  
\ar[r]^{\cong\ \ \ }_{f_{3}\ \ }  
& \check{H}^{q}(X, K_X\otimes F \otimes \I{h}). 
\ar@{=}[u]
}\]
\end{proof}

At the end of this step, 
we prove Proposition \ref{f-prop5.7}. 

\begin{prop}\label{f-prop5.7}
If we have 
$$
\varliminf_{\e \to 0} \varliminf_{\delta \to 0} \|su_{\e, \delta}\|_{h_{\e}h_M, \omega_{\e,\delta}} =0, 
$$
then the weak limit $\alpha$ is zero. 
In particular, 
the cohomology class $\eta$ is zero by Proposition \ref{f-prop5.6}. 
\end{prop}

\begin{proof}[Proof of Proposition \ref{f-prop5.7}.]
In the proof, we compare the 
norm of $u_{\e, \delta}$ with the norm of $su_{\e, \delta}$. 
For this purpose, we define $Y^{k}_{\e_{0}}$ to be  
\begin{align*}
Y^{k}_{\e_{0}}:=\{y\in Y_{\e_{0}} \, 
\mid\, |s|_{h_M}>1/k \text{ at } y\} 
\end{align*}
for $k \gg 0$. 
Note the subset $Y^{k}_{\e_{0}}$ is an open set in $Y_{\e_{0}}$. 
It follows that the restriction 
$\alpha_{\e}|_{Y^{k}_{\e_{0}}}$ also weakly converges 
to $\alpha|_{Y^{k}_{\e_{0}}}$ 
in $L^{n,q}_{(2)}( Y^{k}_{\e_{0}},F)_{h_{\e_{0}}, \omega}$ 
since the restriction map $L^{n,q}_{(2)}(F)_{h_{\e_{0}}, \omega} 
\to L^{n,q}_{(2)}( Y^{k}_{\e_{0}},F)_{h_{\e_{0}}, \omega}$ is 
a bounded operator and  $\alpha_{\e}$ weakly converges 
to $\alpha$ 
in $L^{n,q}_{(2)}(F)_{h_{\e_{0}}, \omega}$. 
Since the norm is lower semicontinuous with respect to 
the weak convergence, 
we obtain the estimate for the $L^2$-norm on $Y^{k}_{\e_{0}}$
\begin{align*}
\| \alpha \|_{Y^{k}_{\e_{0}}, h_{\e_{0}}, \omega}
\leq \varliminf_{\e \to 0} 
\| \alpha_{\e} \|_{ Y^{k}_{\e_{0}}, h_{\e_{0}}, \omega}
\leq \varliminf_{\e \to 0} 
\| \alpha_{\e} \|_{ Y^{k}_{\e_{0}}, h_{\e}, \omega}
\end{align*}
by property (b). 
By the same argument, 
the restriction $u_{\e,\delta}|_{Y^{k}_{\e_{0}}}$ weakly converges to 
$\alpha_{\e}|_{Y^{k}_{\e_{0}}}$ 
in $L^{n,q}_{(2)}(Y^{k}_{\e_{0}},F)_{\e, \delta_{0}}$, 
and thus we obtain 
\begin{align*}
\| \alpha_{\e} \|_{Y^{k}_{\e_{0}},\e, \delta_{0}}
\leq \varliminf_{\delta \to 0} 
\| u_{\e,\delta} \|_{Y^{k}_{\e_{0}}, \e, \delta_{0}}
\leq \varliminf_{\delta \to 0} 
\| u_{\e,\delta} \|_{Y^{k}_{\e_{0}}, \e, \delta}
\end{align*}
by Lemma \ref{f-lem2.4}. 
As $\delta_{0} \to 0$ in the above inequality, 
we have 
$$
\| \alpha_{\e} \|_{Y^{k}_{\e_{0}},h_{\e}, \omega}
\leq 
\varliminf_{\delta_{0} \to 0}
\| \alpha_{\e} \|_{Y^{k}_{\e_{0}},\e, \delta_{0}}
\leq \varliminf_{\delta \to 0} 
\| u_{\e,\delta} \|_{Y^{k}_{\e_{0}}, \e, \delta} 
$$ 
by Fatou's lemma (see the argument in Proposition \ref{f-prop5.4}).  
These inequalities yield     
\begin{align*}
\| \alpha \|_{Y^{k}_{\e_{0}}, h_{\e_{0}}, \omega}
\leq \varliminf_{\e \to 0} \varliminf_{\delta \to 0}
\| u_{\e,\delta} \|_{Y^{k}_{\e_{0}}, 
\e, \delta}. 
\end{align*}
On the other hand, 
it follows that 
\begin{align*}
\| u_{\e,\delta} \|_{Y^{k}_{\e_{0}}, 
\e, \delta}
\leq k \| su_{\e,\delta} \|_{Y^{k}_{\e_{0}}, 
h_{\e}h_M, \omega_{\e,\delta}}
\leq k \| su_{\e,\delta} \|_{h_{\e}h_M, \omega_{\e,\delta}} 
\end{align*}
since the inequality $1/k < |s|_{h_M}$ holds on $Y^{k}_{\e_{0}}$. 
This implies that 
$\alpha=0$ on $Y^{k}_{\e_{0}}$ for an arbitrary $k\gg 0$. 
From $\underset{k\gg 0}{\bigcup}Y^{k}_{\e_{0}}=Y_{\e_{0}}\setminus \{s=0\}$, 
we obtain the desired conclusion. 
\end{proof}
\end{step}

\begin{step}\label{f-st3}
The purpose of this step is to prove the following proposition: 
\begin{prop}\label{f-prop5.8}
$$\lim_{\e \to 0} \varlimsup_{\delta \to 0}
\| \dbar^{*}_{\e, \delta} s u_{\e, \delta} 
\|_{h_{\e}h_M, \omega_{\e,\delta}}=0.
$$

\end{prop}
\begin{proof}[Proof of Proposition \ref{f-prop5.8}]
In the proof, we will often use \eqref{eq5.3}. 
By applying Bochner--Kodaira--Nakano's identity and the density lemma to 
$u_{\e, \delta}$ and $s u_{\e, \delta}$ 
(see \cite[Proposition 2.8]{matsumura1}), 
we obtain 
\begin{align}
0 &= \lla \sqrt{-1}\Theta_{h_{\e}}(F)
\Lambda_{\omega_{\e, \delta}} u_{\e,\delta}, u_{\e,\delta}
  \rra_{\e,\delta} + \|D'^{*}_{\e,\delta}u_{\e,\delta} \|^{2}_{\e,\delta}, \label{eq5.4}\\
\| \dbar^{*}_{\e, \delta} s u_{\e, \delta} 
\|^2_{h_{\e}h_M, \omega_{\e,\delta}} &= \lla \sqrt{-1}\Theta_{h_{\e}h_{M}}(F\otimes M)
\Lambda_{\omega_{\e, \delta}} su_{\e, \delta}, su_{\e, \delta}
  \rra_{h_{\e}h_M, \omega_{\e,\delta}} + 
  \|D'^{*}_{\e, \delta}su_{\e, \delta} \|^{2}_{h_{\e}h_M, \omega_{\e,\delta}},    \label{eq5.5}
\end{align}
where $D'^{*}_{\e, \delta}$ 
is the adjoint operator of the $(1,0)$-part 
of the Chern connection $D_{h_{\e}}$. 
Here we used the fact that $u_{\e, \delta}$ is harmonic 
and $\dbar (s u_{\e,\delta})=s\dbar u_{\e,\delta}=0$. 
Now we have 
$$
\sqrt{-1}\Theta_{h_{\e}}(F) \geq b \sqrt{-1}\Theta_{h_M}(M)  -\e \omega 
\geq -\e \omega \geq -\e \omega_{\e,\delta}  
$$ 
by property (d) and property (B). 
Hence, the integrand $g_{\e, \delta}$ of the first term 
of \eqref{eq5.4} satisfies 
\begin{equation}\label{eq5.6}
 -\e q |u_{\e, \delta}|^{2}_{\e, \delta} 
 \leq g_{\e, \delta}:= \langle \sqrt{-1}\Theta_{h_{\e}}(F)
\Lambda_{\omega_{\e, \delta}} u_{\e,\delta}, u_{\e,\delta}
  \rangle_{\e,\delta}. 
\end{equation}
For the precise 
argument, see \cite[Step 2 in the proof of Theorem 3.1]{matsumura4}. 
Then by \eqref{eq5.4}, we can easily see   
\begin{align*}
\lim_{\e \to 0} \varlimsup_{\delta \to 0} 
\Big( \int_{\{g_{\e, \delta} \geq 0\}} g_{\e, \delta}\,dV_{\omega_{\e, \delta}}
+\|D'^{*}_{\e,\delta}u_{\e,\delta} \|^{2}_{\e,\delta} \Big)
&= \lim_{\e \to 0} \varlimsup_{\delta \to 0} 
\Big(-\int_{\{g_{\e, \delta} \leq 0\}} g_{\e, \delta}\, dV_{\omega_{\e, \delta}}\Big)\\
&\leq \lim_{\e \to 0} \varlimsup_{\delta \to 0} 
\Big(\e q \int_{\{g_{\e, \delta} \leq 0\}}  |
u_{\e, \delta}|^{2}_{\e, \delta}\, dV_{\omega_{\e, \delta}}\Big)\\
&\leq  \lim_{\e \to 0} \varlimsup_{\delta \to 0} 
\Big(\e q \|u_{\e, \delta} \|^2_{\e, \delta}\Big)=0.  
\end{align*}
Here we used \eqref{eq5.3} in the last equality. 

On the other hand, 
by $\sqrt{-1}\Theta_{h_{\e}}(F) 
\geq b \sqrt{-1}\Theta_{h_M}(M) -\e \omega_{\e, \delta}$, 
we have 
\begin{align*}
&\lla \sqrt{-1}\Theta_{h_{\e}h_{M}}(F\otimes M)
\Lambda_{\omega_{\e, \delta}} 
su_{\e, \delta}, su_{\e, \delta} \rra_{h_{\e}h_M, \omega_{\e,\delta}}\\ 
\leq& 
\big(1+\frac{1}{b}\big)
\int_{Y_{\e}} |s|^2_{h_M}g_{\e,\delta}\,dV_{\omega_{\e, \delta}}
+\frac{\e q}{b} \int_{Y_{\e}} 
|s|^2_{h_M}|u_{\e, \delta}|^{2}_{\e, \delta} \,dV_{\omega_{\e, \delta}}\\
\leq& \big(1+\frac{1}{b}\big)\sup_{X} 
|s|^2_{h_M} \Big\{
\int_{\{g_{\e, \delta} \geq 0\}} g_{\e,\delta}\,dV_{\omega_{\e, \delta}}+
\frac{\e q}{b} \sup_{X} |s|^2_{h_M} \|u_{\e,\delta}\|^2_{\e,\delta} \Big\}. 
\end{align*}
Furthermore, since $D'^{*}_{\e, \delta}$ can be expressed as 
$D'^{*}_{\e, \delta}=-*\dbar* $ by the Hodge star operator $*$ 
with respect to $\omega_{\e,\delta}$,  
we have 
\begin{align*}
\|D'^{*}_{\e, \delta} su_{\e, \delta} \|^{2}_{h_{\e}h_M, \omega_{\e,\delta}}
=\|s D'^{*}_{\e, \delta} u_{\e, \delta} \|^{2}_{h_{\e}h_M, \omega_{\e,\delta}}
\leq \sup_{X} |s|^2_{h_M} \|D'^{*}_{\e, \delta} u_{\e, \delta} \|^{2}_{\e, \delta}. 
\end{align*}
The right-hand side of \eqref{eq5.5} can be shown to converge to zero 
by the first half argument and these inequalities. 
\end{proof}
\end{step}

\begin{step}\label{f-st4}
In this step, we construct solutions $v_{\e,\delta}$ of 
the $\dbar$-equation $\dbar v_{\e,\delta}=s u_{\e,\delta}$ with suitable $L^2$-norm, 
and we finish the proof of Theorem \ref{f-thm5.1}. 
The proof of the following proposition is 
a slight variant of that of \cite[Theorem 5.9]{matsumura4}.  

\begin{prop}\label{f-prop5.9}
There exist  $F$-valued $(n, q-1)$-forms  $w_{\e, \delta}$ 
on $Y_{\e}$ with the following properties:  
\begin{itemize}
\item[$\bullet$] $\dbar w_{\e,\delta}=u-u_{\e, \delta}$. 
\item[$\bullet$] $\varlimsup_{\delta \to 0} \| w_{\e,\delta}\|_{\e, \delta} $ 
can be bounded by a constant independent of $\e$. 
\end{itemize}
\end{prop}

Before we begin to prove Proposition \ref{f-prop5.9}, 
we recall the content in \cite[Section 5]{matsumura4} with our notation. 
For a finite open cover $\mathcal{U}:=\{B_{i}\}_{i \in I}$ of $X$ 
by sufficiently small Stein open sets $B_{i}$,  
we can construct 
\begin{align*}
f_{\e,\delta}\colon \Ker \dbar \text{ in } L^{n,q}_{(2)}(F)_{\e,\delta} 
\xrightarrow{\quad  \quad }
\Ker \mu \text{ in } 
C^{q}(\mathcal{U}, K_X \otimes F \otimes \I{h_{\e}}) 
\end{align*}
such that $f_{\e,\delta}$ induces the de Rham--Weil isomorphism 
\begin{align}\label{eq5.7}
\overline{f_{\e,\delta}} \colon \dfrac{\Ker \dbar}{\Image \dbar} 
\text{ of } L^{n,q}_{(2)}(F)_{\e,\delta}  
\xrightarrow{\quad \cong \quad }
\dfrac{\Ker \mu}{\Image \mu} 
\text{ of } C^{q}(\mathcal{U}, K_X\otimes F \otimes \I{h_{\e}}).  
\end{align}
Here $C^{q}(\mathcal{U}, K_X \otimes F \otimes \I{h_{\e}})$ is 
the space of $q$-cochains calculated by $\mathcal{U}$ and 
$\mu$ is the coboundary operator. 
We remark that $C^{q}(\mathcal{U}, K_X \otimes F \otimes \I{h_{\e}})$ 
is a Fr\'echet space with respect to the seminorm $p_{K_{i_{0}...i_{q}}}(\bullet)$ 
defined to be 
\begin{equation*}
p_{K_{i_{0}...i_{q}}}(\{\beta_{i_{0}...i_{q}}\})^{2}:=
\int_{K_{i_{0}...i_{q}}} |\beta_{i_{0}...i_{q}}|_{h_{\e}, \omega}^{2}\, dV_{\omega}
\end{equation*}
for a relatively compact set 
$K_{i_{0}...i_{q}} \Subset B_{i_{0}...i_{q}}:=B_{i_{0}}\cap \dots \cap B_{i_{q}}$ 
(see \cite[Theorem 5.3]{matsumura4}). 
The construction of $f_{\e, \delta}$ is essentially the same as 
in the proof of \cite[Proposition 5.5]{matsumura4}. 
The only difference is 
that we use Lemma \ref{f-lem5.12} instead of \cite[Lemma 5.4]{matsumura4} 
when we locally solve the $\dbar$-equation to construct $f_{\e, \delta}$.
Lemma \ref{f-lem5.12} will be given at the end of this step. 
We prove Proposition \ref{f-prop5.9}  
by replacing some constants appearing in the proof of 
\cite[Theorem 5.9]{matsumura4} 
with $C_{\e,\delta}$ appearing in Lemma \ref{f-lem5.12}. 

\begin{proof}[Proof of Proposition \ref{f-prop5.9}]
We put $U_{\e,\delta}:=u-u_{\e,\delta} \in \Image 
\dbar \subset L^{n,q}_{(2)}(F)_{\e,\delta}$. 
Then there exist the $F$-valued $(n,q-k-1)$-forms 
$\beta^{\e, \delta}_{i_{0}\dots i_{k}}$ 
on $B_{i_{0}\dots i_{k}} \setminus Z_{\e}$ 
satisfying 
\[
  (*) \left\{ \quad
  \begin{array}{ll}
\vspace{0.2cm}
\dbar \beta^{\e, \delta}_{i_{0}} &=U_{{\e, \delta}} |_{B_{i_{0}}\setminus Z_{\e}},  \\
\dbar \{ \beta^{\e, \delta}_{i_{0}i_{1}} \}&=\mu  \{\beta^{\e, \delta}_{i_{0}}\},  \\
\dbar \{ \beta^{\e, \delta}_{i_{0}i_{1}i_{2}} \}&
=\mu   \{\beta^{\e, \delta}_{i_{0}i_{1}}\},  \\
 & \vdots   \\
\dbar \{ \beta^{\e, \delta}_{i_{0}\dots i_{q-1}} \}
&=\mu   \{\beta^{\e, \delta}_{i_{0}\dots i_{q-2}}\}, \\
f_{\e, \delta}(U_{\e, \delta})&=\mu 
\{\beta^{\e, \delta}_{i_{0}\dots i_{q-1}}\}. 
  \end{array} \right.
\]
Here $\beta^{\e, \delta}_{i_{0}\dots i_{k}}$ 
is the solution of the above equation whose 
norm is minimum among all the solutions 
(see the construction of $f_{\e,\delta}$ 
in \cite[Proposition 5.5]{matsumura4}). 
For example, $\beta^{\e, \delta}_{i_{0}}$ is the solution 
of $\dbar \beta^{\e, \delta}_{i_{0}}=U_{{\e, \delta}}$ on  
${B_{i_{0}}\setminus Z_{\e}}$ whose norm 
$\|\beta^{\e, \delta}_{i_{0}} \|_{\e,\delta}$ 
is minimum among all the solutions. 
In particular  
$\|\beta^{\e, \delta}_{i_{0}}\|^{2}_{\e,\delta} \leq 
C_{\e, \delta} \|U_{\e, \delta} \|^2_{B_{i_{0}}, \e,\delta}
\leq C_{\e, \delta} \|U_{\e, \delta} \|^2_{\e,\delta}$ 
holds for some constant $C_{\e,\delta}$ by Lemma \ref{f-lem5.12}, 
where $C_{\e,\delta}$ is a constant such that 
$\varlimsup_{\delta \to 0} C_{\e,\delta}$ (is finite and) is independent of $\e$. 
Similarly, $\beta^{\e, \delta}_{i_{0}i_{1}}$ is 
the solution of $\dbar \beta^{\e, \delta}_{i_{0}i_{1}}= 
(\beta^{\e, \delta}_{i_{1}}- \beta^{\e, \delta}_{i_{0}})$ on 
$B_{i_{0}i_{1}}\setminus Z_{\e}$ 
and the norm 
$$
\|\beta^{\e, \delta}_{i_{0}i_{1}}\|^2_{\e,\delta}:=
\int_{B_{i_{0}i_{1}}\setminus Z_{\e}} 
|\beta^{\e, \delta}_{i_{0}i_{1}}|^{2}_{\e,\delta}\, dV_{\e,\delta}
$$
is minimum among all the solutions. 
In particular, 
$\|\beta^{\e, \delta}_{i_{0}i_{1}}\|^{2}_{\e,\delta} \leq 
D_{\e, \delta} \|(\beta^{\e, \delta}_{i_{1}}- \beta^{\e, \delta}_{i_{0}})\|^2_{\e,\delta}$ holds 
for some constant $D_{\e,\delta}$ by Lemma \ref{f-lem5.12}. 
Of course $D_{\e,\delta}$ is a constant such that 
$\varlimsup_{\delta \to 0} D_{\e,\delta}$ (is finite and) is  
independent of $\e$.
Hence we have 
$$
\|\beta^{\e, \delta}_{i_{0}i_{1}}\|_{\e,\delta} \leq 
D_{\e, \delta}^{1/2} \|(\beta^{\e, \delta}_{i_{1}}- 
\beta^{\e, \delta}_{i_{0}})\|_{\e,\delta}
\leq 
2C^{1/2}_{\e, \delta}D^{1/2}_{\e,\delta} \|U_{\e, \delta} \|_{\e,\delta}
\leq 4C^{1/2}_{\e, \delta}D^{1/2}_{\e,\delta} \|u \|_{h, \omega}
$$
by \eqref{eq5.3}. 
From now on, the notation $C_{\e,\delta}$ denotes 
a (possibly different) constant such that  
$\varlimsup_{\delta \to 0} C_{\e,\delta}$ can be bounded by 
a constant independent of $\e$.
By repeating this process, 
we have 
\begin{align*}
\|\beta^{\e, \delta}_{i_{0}\dots i_{k}}\|^2_{\e, \delta}\leq 
C_{\e, \delta} \|u\|^2_{h,\omega}. 
\end{align*}
Moreover, by property (c), we have 
$$
\alpha_{\e, \delta}:=f_{\e, \delta}(U_{\e, \delta})=
\mu \{\beta^{\e, \delta}_{i_{0}\dots i_{q-1}}\} 
\in C^{q}(\mathcal{U}, K_X\otimes F\otimes \I{h_{\e}})
=C^{q}(\mathcal{U}, K_X\otimes F\otimes \I{h}).
$$
\begin{claim}\label{f-claim}
There exist subsequences $\{\e_{k}\}_{k=1}^{\infty}$ and 
$\{\delta_{\ell}\}_{\ell=1}^{\infty}$ with the following properties: 
\begin{itemize}
\item[$\bullet$] $\alpha_{\e_{k},\delta_{\ell}} \to \alpha_{\e_{k},0}$ 
in $C^{q}(\mathcal{U}, K_X\otimes F \otimes \I{h})$
as 
$\delta_{\ell} \to 0$.
\item[$\bullet$] $\alpha_{\e_{k},0} \to 
\alpha_{0,0}$ in $C^{q}(\mathcal{U}, 
K_X\otimes F \otimes \I{h})$ as $\e_{k} \to 0$. 
\end{itemize}
Moreover, the limit $\alpha_{0,0}$ belongs to 
$B^{q}(\mathcal{U}, K_X\otimes F \otimes \I{h}):= \Image\mu$. 
\end{claim}
\begin{proof}[Proof of Claim]
By construction, the norm $\|a_{\e,\delta}\|_{B_{i_{0}\dots i_{q}}, \e, \delta}$ 
of a component $a_{{\e,\delta}}:=\alpha^{\e,\delta}_{{i_{0}\dots i_{q}}}$ 
of $\alpha_{\e,\delta}=\{ \alpha^{\e,\delta}_{{i_{0}\dots i_{q}}} \} $ 
can be bounded by a constant $C_{\e, \delta}$.
Note that $a_{\e,\delta}$ can be regarded as a holomorphic function 
on $B_{i_{0}\dots i_{q}} \setminus Z_{\e}$
with bounded $L^2$-norm 
since it is a $\dbar$-closed $F$-valued $(n,0)$-form such that 
$\|a_{\e,\delta}\|_{B_{i_{0}\dots i_{q}}, \e, \delta}<\infty$ 
(see Lemma \ref{f-lem2.4}). 
Hence $a_{\e,\delta}$ can be extended from 
$B_{i_{0}\dots i_{q}}\setminus Z_{\e}$ 
to $B_{i_{0}\dots i_{q}}$ by the Riemann extension theorem.
The sup-norm $\sup_{K}|a_{\e,\delta}|$ is 
uniformly bounded with respect to $\delta$
for every $K \Subset B_{i_{0}\dots i_{q}}$ since 
the local sup-norm of holomorphic functions can be bounded by the $L^{2}$-norm. 
By Montel's theorem, we can take 
a subsequence $\{\delta_{\ell}\}_{\ell=1}^{\infty}$ 
with the first property. 
This subsequence may depend on $\e$, 
but we can take $\{\delta_{\ell}\}_{\ell=1}^{\infty}$ 
independent of (countably many) $\e$. 
Then the norm of the limit $a_{\e,0}$ is uniformly bounded with respect to $\e$ 
since $\varlimsup_{\delta \to 0}C_{\e,\delta}$ can be 
bounded by a constant independent of $\e$ (see Lemma \ref{f-lem5.12}). 
Therefore, by applying Montel's theorem again, 
we can take a subsequence $\{\e_{k}\}_{k=1}^{\infty}$ 
with the second property. 
We remark that the convergence with respect to the sup-norm  
implies the convergence with respect to the local $L^2$-norm $p_{K}(\bullet)$  
(see \cite[Lemma 5.2]{matsumura4}). 

It is easy to check the latter conclusion. 
Indeed, it follows that 
$\alpha_{\e, \delta} = f_{\e, \delta}(U_{\e,\delta})\in \Image\mu$ since  
$U_{\e,\delta} \in \Image \dbar \subset L^{n,q}_{(2)}(F)_{\e,\delta}$ and 
$f_{\e,\delta}$ induces the de Rham--Weil isomorphism. 
By \cite[Lemma 5.7]{matsumura4}, the subspace $\Image\mu$ 
is closed. 
Therefore, we obtain the latter conclusion. 
\end{proof}

Now, we construct solutions $\gamma_{\e,\delta}$ of 
the equation $\mu \gamma_{\e,\delta} = \alpha_{\e,\delta}$ 
with suitable $L^2$-norm. 
For simplicity, we continue to use the same notation for the subsequences  
in Claim. 
By the latter conclusion of the claim, 
there exists  
$\gamma \in C^{q-1}(\mathcal{U}, K_X\otimes F \otimes \I{h})$ 
such that $\mu \gamma = \alpha_{0,0}$. 
The coboundary operator 
\begin{equation*}
\mu\colon C^{q-1}(\mathcal{U}, K_X\otimes F\otimes \I{h}) \to 
B^{q}(\mathcal{U}, K_X\otimes F\otimes \I{h})= \Image\mu
\end{equation*}
is a surjective bounded operator between Fr\'echet spaces 
(see \cite[Lemma 5.7]{matsumura4}), and thus  
it is an open map by the open mapping theorem. 
Therefore $\mu (\Delta_{K})$ is an open neighborhood of 
the limit $\alpha_{0,0}$ in $\Image\mu$, 
where $\Delta_{K}$ is the open bounded neighborhood of $\gamma$ in 
$C^{q-1}(\mathcal{U}, K_X\otimes F \otimes \I{h})$ 
defined to be  
\begin{equation*}
\Delta_{K}:=\{ \beta \in C^{q-1}(\mathcal{U}, 
K_X\otimes F \otimes \I{h}) 
\mid \ p_{K_{i_{0}...i_{q-1}}}(\beta-\gamma) < 1 \} 
\end{equation*}
for a family $K:=\{K_{i_{0}...i_{q-1}}\}$ 
of relatively compact sets 
$K_{i_{0}...i_{q-1}} \Subset B_{i_{0}...i_{q-1}}$. 
We have $\alpha_{\e, \delta} \in \mu (\Delta_{K})$ for sufficiently small $\e, \delta>0$  
since $\alpha_{\e, \delta}$ converges to $\alpha_{0,0}$. 
Since $\Delta_{K}$ is bounded, 
we can obtain $\gamma_{\e, \delta} \in 
C^{q-1}(\mathcal{U}, K_X\otimes F \otimes \I{h})$ 
such that 
\begin{align*}
\mu \gamma_{\e, \delta}= \alpha_{\e, \delta} \text{\quad and \quad}
p_{K_{i_{0}...i_{q-1}}}(\gamma_{\e, \delta})^{2}\leq C_{K} 
\end{align*}
for some positive constant $C_{K}$. 
The above constant $C_{K}$ 
depends on the choice of $K$, $\gamma$, but does not depend on $\e, \delta$. 

By the same argument as in \cite[Claim 5.11 and Claim 5.13]{matsumura4}, 
we can obtain $F$-valued $(n,q-1)$-forms $w_{\e, \delta}$ with the desired properties. 
The strategy is as follows:
The inverse map $\overline{g_{\e,\delta}}$ of $\overline{f_{\e,\delta}}$ 
is explicitly constructed by using a partition of unity 
(see the proof of \cite[Proposition 5.5]{matsumura4} and 
\cite[Remark 5.6]{matsumura4}). 
We can easily see that 
$g_{\e, \delta}(\mu \gamma_{\e, \delta})=\dbar v_{\e, \delta}$ and 
$g_{\e, \delta}(\alpha_{\e, \delta})=U_{\e, \delta}+\dbar \widetilde{v}_{\e, \delta}$ 
hold for some $v_{\e, \delta}$ and $\widetilde{v}_{\e, \delta}$ 
by the de Rham--Weil isomorphism. 
In particular, we have $U_{\e, \delta}
=\dbar(v_{\e, \delta} - \widetilde{v}_{\e, \delta})$
by $\mu \gamma_{\e, \delta}=\alpha_{\e, \delta}$.  
The important point here is that 
we can explicitly compute $v_{\e, \delta}$ and $\widetilde{v}_{\e, \delta}$ 
by using the partition of unity, $\beta^{\e,\delta}_{i_{0}...i_{k}}$, 
and $\gamma_{\e,\delta}$. 
From this explicit expression, we obtain the $L^2$-estimate for 
$v_{\e, \delta}$ and $\widetilde{v}_{\e, \delta}$.
See \cite[Claim 5.11 and 5.13]{matsumura4} for the precise argument. 
\end{proof}

\begin{prop}\label{f-prop5.10}
There exist $F\otimes M$-valued $(n, q-1)$-forms  
$v_{\e,\delta}$ 
on $Y_{\e}$  
with the following properties:  
\begin{itemize}
\item[$\bullet$] $\dbar v_{\e,\delta}=su_{\e, \delta}$. 
\item[$\bullet$] $\varlimsup_{\delta \to 0} \| v_{\e,\delta}\|_{h_{\e}h_M, \omega_{\e,\delta}} $ 
can be bounded by a constant independent of $\e$. 
\end{itemize}
\end{prop}
\begin{proof}[Proof of Proposition \ref{f-prop5.10}]
Since the cohomology class of $su$ is assumed to be zero 
in $H^{q}(X, K_X\otimes F \otimes \I{h}\otimes M)$,  
there exists an $F\otimes M$-valued $(n, q-1)$-form $v$ 
such that $\dbar v =  su$ and 
$\|v \|_{h, \omega} < \infty$.  
For $w_{\e,\delta}$ satisfying the properties in Proposition \ref{f-prop5.9},  
by putting $v_{\e,\delta}:= -s w_{\e,\delta} + v$,  
we have $\dbar v_{\e,\delta} = su_{\e,\delta}$. 
Furthermore, an easy computation yields  
\begin{align*}
\|v_{\e, \delta}\|_{h_{\e}h_M, \omega_{\e,\delta}} \leq 
\|s w_{\e, \delta}\|_{h_{\e}h_M, \omega_{\e,\delta}} + 
\|v \|_{h_{\e}h_M, \omega_{\e,\delta}}
\leq \sup_{X}|s|_{h_M} \|w_{\e, \delta} \|_{\e, \delta} + 
\|v \|_{h_{\e}h_M, \omega_{\e,\delta}}. 
\end{align*}
By Lemma \ref{f-lem2.4}, property (b), and property (B), 
we have $\| v \|_{h_{\e}h_M, \omega_{\e,\delta}}
\leq \|v \|_{h, \omega} < \infty$. 
This completes the proof.  
\end{proof}

The following proposition completes 
the proof of Theorem \ref{f-thm5.1} 
(see Proposition \ref{f-prop5.7}).
 
\begin{prop}\label{f-prop5.11}
\begin{align*}
\lim_{\e \to 0} \varlimsup_{\delta \to 0} \|s 
u_{\e, \delta}\|_{h_{\e}h_M, \omega_{\e,\delta}}=0. 
\end{align*}
\end{prop}
\begin{proof}[Proof of Proposition \ref{f-prop5.11}]
For the solution 
$v_{\e,\delta}$ satisfying the properties in Proposition \ref{f-prop5.10}, 
it is easy to see 
\begin{align*}
\lim_{\e \to 0} \varlimsup_{\delta \to 0} \|s 
u_{\e, \delta}\|^2_{h_{\e}h_M, \omega_{\e,\delta}}
&=\lim_{\e \to 0} \varlimsup_{\delta \to 0} 
\lla \dbar^{*}_{\e,\delta} s 
u_{\e, \delta}, v_{\e,\delta}\rra_{h_{\e}h_M, \omega_{\e,\delta}}\\
&\leq \lim_{\e \to 0} \varlimsup_{\delta \to 0}
\| \dbar^{*}_{\e,\delta} s u_{\e, \delta}\|_{h_{\e}h_M, 
\omega_{\e,\delta}} \|v_{\e,\delta}\|_{h_{\e}h_M, \omega_{\e,\delta}}. 
\end{align*}
Proposition \ref{f-prop5.8} and Proposition \ref{f-prop5.10} assert that 
the right-hand side is zero. 
\end{proof}

We close this step with the following lemma: 

\begin{lem}[{cf.~\cite[4.1\,Th\'eor\`eme]{demailly-dbar}}]\label{f-lem5.12}
Assume that $B$ is a Stein open set in $X$ such that 
$\omega_{\e, \delta}=\deldel 
(\Psi + \delta \Psi_{\e})$ on a neighborhood of $\overline B$. 
Then for an arbitrary 
$\alpha \in \Ker \dbar \subset L^{n, q}_{(2)}(B\setminus Z_{\e}, F)_{\e, \delta}$, 
there exist $\beta \in L^{n, q-1}_{(2)}(B\setminus Z_{\e}, F)_{\e, \delta}$ 
and a positive constant $C_{\e, \delta}$ $($independent of $\alpha$$)$ 
such that 
\begin{align*}
&\bullet \dbar \beta = \alpha \text{\quad and \quad}
\|\beta \|^{2}_{\e, \delta}\leq C_{\e, \delta}  \|\alpha \|^{2}_{\e, \delta},\\ 
&\bullet \varlimsup_{\delta \to 0} 
C_{\e,\delta} \text{ $($is finite and$)$  is independent of }\e. 
\end{align*}
\end{lem}
\begin{proof}[Proof of Lemma \ref{f-lem5.12}]
We may assume $\e < 1/2$ since $0<\e \ll 1$. 
For the singular Hermitian metric $H_{\e,\delta}$ on $F$ defined by 
$H_{\e,\delta}:=h_{\e} e^{-(\Psi + \delta \Psi_{\e})}$, 
the curvature satisfies 
\begin{align*}
\sqrt{-1}\Theta_{H_{\e,\delta}}(F)&= \sqrt{-1}\Theta_{h_{\e}}(F) + 
\deldel (\Psi + \delta \Psi_{\e})  \geq -\e \omega + \omega_{\e,\delta}
\geq (1-\e) \omega_{\e,\delta}
\geq  \frac{1}{2} \omega_{\e,\delta}
\end{align*}
by property (B) and 
$\sqrt{-1}\Theta_{h_{\e}}(F) \geq -\e \omega$.  
The $L^{2}$-norm $\|\alpha \|_{H_{\e,\delta}, 
\omega_{\e,\delta}}$ with respect to $H_{\e,\delta}$ 
and $\omega_{\e,\delta}$ is finite 
since the function $\Psi + \delta \Psi_{\e} $ is bounded and 
$\|\alpha \|_{\e,\delta}$ is finite. 
Therefore, from the standard $L^{2}$-method for the $\dbar$-equation 
(for example see \cite[4.1\,Th\'eor\`eme]{demailly-dbar}), 
we obtain a solution $\beta$ of the $\dbar$-equation 
$\dbar \beta =\alpha$ with  
\begin{equation*}
\|\beta \|^{2}_{H_{\e,\delta},\omega_{\e,\delta}} 
\leq \frac{2}{q} \|\alpha \|^{2}_{H_{\e,\delta},\omega_{\e,\delta}}.  
\end{equation*}
Then we can easily see that 
\begin{equation*}
\|\beta \|^2_{\e, \delta}  \leq 
\frac{2}{q}
\frac{ \sup_{B} e^{-(\Psi + \delta \Psi_{\e})} }
{\inf_{B} e^{-(\Psi + \delta \Psi_{\e})}}
\|\alpha \|^2_{\e, \delta}. 
\end{equation*}
This completes the proof by property (B). 
\end{proof}

\begin{rem}\label{f-rem5.13}
In Lemma \ref{f-lem5.12}, we take a solution 
$\beta_0\in L^{n, q-1}_{(2)}(B\setminus Z_{\e}, F)_{\e, \delta}$ 
of the equation $\dbar \beta=\alpha$. 
Then $\beta_0$ is uniquely decomposed as follows: 
$$
\beta_0=\beta_1+\beta_2
\quad \text{for }\beta_1\in \Ker \dbar
\text{ and } 
\beta_2\in (\Ker\dbar )^{\perp}. 
$$
We can easily check that $\beta_2$ is a unique solution of $\dbar \beta=
\alpha$ whose norm is the minimum among all the solutions. 
\end{rem}
\end{step}
Thus we finish the proof of Theorem \ref{f-thm5.1}. 
\end{proof}

\section{Twists by Nakano semipositive vector bundles}\label{f-sec6}

We have already known that 
some results for $K_X$ can be generalized 
for $K_X\otimes E$, 
where $E$ is a Nakano semipositive vector bundle 
on $X$ (see, for example, \cite{takegoshi}, \cite{mour}, 
and \cite{fujisawa}). Let us recall the 
definition of Nakano semipositive vector bundles. 

\begin{defn}[Nakano semipositive vector bundles]\label{f-def6.1}
Let $E$ be a holomorphic vector bundle on a complex manifold $X$. 
If $E$ admits a smooth Hermitian metric $h_E$ 
such that the curvature form $\sqrt{-1}\Theta_{h_E}(E)$ defines 
a positive semi-definite Hermitian form on each fiber of the vector bundle $E\otimes T_X$, 
where $T_X$ is the holomorphic tangent bundle of $X$, 
then $E$ is called a Nakano semipositive vector bundle. 
\end{defn}

\begin{ex}[Unitary flat vector bundles]\label{f-ex6.2} 
Let $E$ be a holomorphic 
vector bundle on a complex manifold $X$.  
If $E$ admits a smooth Hermitian metric $h_E$ such that $(E, h_E)$ is flat, 
that is, $\sqrt{-1}\Theta_{h_E}(E)=0$, then 
$E$ is Nakano semipositive. 
\end{ex}

For the proof of Theorem \ref{f-thm1.12}, 
we need the following lemmas on Nakano semipositive vector bundles. 
However, these lemmas easily follow from the 
definition of Nakano semipositive vector bundles, 
and thus, we omit the proof. 

\begin{lem}\label{f-lem6.3}
Let $E$ be a Nakano semipositive vector bundle on a 
complex manifold $X$. 
Let $H$ be a smooth divisor on $X$. 
Then $E|_H$ is a Nakano semipositive vector bundle on $H$. 
\end{lem}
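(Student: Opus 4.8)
The plan is to take the metric that realizes Nakano semipositivity of $E$, restrict it to $H$, and observe that both the curvature computation and the semipositivity condition behave well under restriction to a submanifold.

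First I would fix a smooth hermitian metric $h_E$ on $E$ with the property that $\sqrt{-1}\Theta_{h_E}(E)$ is positive semi-definite as a hermitian form on each fiber of $E\otimes T_X$; such an $h_E$ exists by the definition of Nakano semipositivity. Let $\iota: H\hookrightarrow X$ denote the inclusion and put $h_{E|_H}:=\iota^*h_E$, which is a smooth hermitian metric on $E|_H=\iota^*E$. By the standard functoriality of the Chern connection under holomorphic pullback, the Chern connection of $h_{E|_H}$ is the pullback of that of $h_E$, and consequently
\begin{equation*}
\sqrt{-1}\Theta_{h_{E|_H}}(E|_H)=\iota^*\bigl(\sqrt{-1}\Theta_{h_E}(E)\bigr).
\end{equation*}

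Next, the key point is purely linear-algebraic. Since $H$ is a smooth divisor, $T_H$ is a holomorphic subbundle of $T_X|_H$, so for each $p\in H$ we have an inclusion $E_p\otimes T_{H,p}\subset E_p\otimes T_{X,p}$. For $\xi\in E_p\otimes T_{H,p}$, the value of the hermitian form attached to $\iota^*\bigl(\sqrt{-1}\Theta_{h_E}(E)\bigr)$ on $\xi$ agrees with the value of the hermitian form attached to $\sqrt{-1}\Theta_{h_E}(E)$ on $\xi$, now regarded as an element of $E_p\otimes T_{X,p}$; the latter is $\geq 0$ by the choice of $h_E$. A positive semi-definite hermitian form restricts to a positive semi-definite hermitian form on any linear subspace, so the hermitian form $\sqrt{-1}\Theta_{h_{E|_H}}(E|_H)$ is positive semi-definite on each fiber of $E|_H\otimes T_H$. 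Hence $h_{E|_H}$ exhibits $E|_H$ as a Nakano semipositive vector bundle on $H$.

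I do not expect any real obstacle here; the statement is formal. The one conceptual point to get right is that the argument uses semipositivity in an essential way: restricting a positive semi-definite form to a subspace preserves the sign, whereas the corresponding restriction statement for strict Nakano positivity of subbundles fails in general. That subtlety simply does not intervene in the present lemma, so the sketch above suffices.
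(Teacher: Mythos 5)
Your argument is correct and is exactly the routine verification the paper has in mind; the paper itself omits the proof, remarking only that the lemma "easily follows from the definition," and your sketch (restrict $h_E$, note the curvature restricts, and use that a positive semi-definite hermitian form stays positive semi-definite on the subspace $E_p\otimes T_{H,p}\subset E_p\otimes T_{X,p}$) is precisely that easy argument. Your closing remark correctly identifies why no second-fundamental-form subtlety arises here: you are restricting the base, not passing to a subbundle of $E$.
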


\begin{lem}\label{f-lem6.4}
Let $q\colon Z\to X$ be an \'etale morphism between complex manifolds. 
Let $(E, h_E)$ be a Nakano semipositive vector bundle on $X$. 
Then $(q^*E, q^*h_E)$ is a Nakano semipositive vector bundle on $Z$. 
\end{lem}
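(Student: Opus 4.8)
The plan is to reduce the statement to a local one and then observe that everything in sight is invariant under a local biholomorphism. Since $q\colon Z\to X$ is étale, it is a local biholomorphism: every point $z\in Z$ has an open neighborhood $V$ such that $q|_V\colon V\to q(V)$ is a biholomorphism onto an open subset of $X$. Nakano semipositivity of $(q^*E,q^*h_E)$ is, by definition, a pointwise condition on the curvature form $\sqrt{-1}\Theta_{q^*h_E}(q^*E)$, regarded at each $z\in Z$ as a hermitian form on the fiber $(q^*E\otimes T_Z)_z$; hence it suffices to verify this condition on each such $V$.

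On $V$, identify $V$ with $q(V)$ via $q|_V$. Under this identification the pullbacks $q^*E$, $q^*h_E$ are carried to $E$, $h_E$, and $T_Z|_V$ is carried to $T_X|_{q(V)}$ because $dq$ is a fiberwise isomorphism there; moreover the Chern connection of $(q^*E,q^*h_E)$ is carried to the Chern connection of $(E,h_E)$, so that
\[
\sqrt{-1}\Theta_{q^*h_E}(q^*E)\big|_V \;=\; (q|_V)^*\bigl(\sqrt{-1}\Theta_{h_E}(E)\bigr).
\]
Consequently the hermitian form on $(q^*E\otimes T_Z)_z$ defined by the left-hand side is carried, via the $\mathbb C$-linear isomorphism $dq_z\otimes \mathrm{id}$, to the hermitian form on $(E\otimes T_X)_{q(z)}$ defined by $\sqrt{-1}\Theta_{h_E}(E)$, which is positive semidefinite by hypothesis. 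Since a positive semidefinite hermitian form stays positive semidefinite under pullback along a $\mathbb C$-linear isomorphism, the curvature form $\sqrt{-1}\Theta_{q^*h_E}(q^*E)$ is positive semidefinite on each fiber of $q^*E\otimes T_Z$, i.e.\ $(q^*E,q^*h_E)$ is Nakano semipositive.

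There is no genuine obstacle here; the lemma is routine once one remembers that étale maps are local biholomorphisms, so the curvature pulls back verbatim in suitable coordinates. The one point worth keeping in mind is that étaleness is essential: it is precisely what makes $dq$ a fiberwise isomorphism, and hence what guarantees that the pulled-back hermitian form on $T_Z\otimes q^*E$ remains positive semidefinite. A general holomorphic map $q$ whose differential is merely injective (or not surjective) would only allow one to control the form on a subspace, and would not preserve Nakano semipositivity.
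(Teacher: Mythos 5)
Your proof is correct: since $q$ is étale it is a local biholomorphism, the Chern connection and hence the curvature of $(q^*E,q^*h_E)$ pull back from those of $(E,h_E)$, and positive semidefiniteness of the curvature as a hermitian form on the fibers of $E\otimes T_X$ is preserved under the pointwise $\mathbb{C}$-linear isomorphism $dq_z\otimes\mathrm{id}$. This is exactly the routine verification the paper has in mind when it omits the proof of Lemma \ref{f-lem6.4} as an easy consequence of the definition.
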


\begin{prop}\label{f-prop6.5}  
Proposition \ref{f-prop1.9} holds even when $K_X$ is replaced with  
$K_X\otimes E$, where 
$E$ is a Nakano semipositive vector bundle on $X$. 
\end{prop}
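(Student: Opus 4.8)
The plan is to revisit the six steps of the proof of Proposition~\ref{f-prop1.9} in Section~\ref{f-sec4} and to check that each one survives the substitution of $\omega_X\otimes E$ for $\omega_X$ \emph{verbatim}, once two structural facts are recorded. First, tensoring with the locally free sheaf $E$ is exact, so every short exact sequence used in Section~\ref{f-sec4} — in particular the restriction sequence \eqref{eq332} of Corollary~\ref{f-cor3.13}, the adjunction sequence \eqref{eq47}, and all of their direct images — stays exact after $\otimes E$. Second, the operations that move between $X$ and auxiliary manifolds preserve Nakano semipositivity, namely restriction to a smooth divisor (Lemma~\ref{f-lem6.3}) and pullback along an \'etale morphism (Lemma~\ref{f-lem6.4}). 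Throughout, the $\omega_X\otimes E$-versions of Theorems~\ref{f-thmA}--\ref{f-thmF} are read as the inputs, and one shows they satisfy the same web of implications.

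First I would dispose of Steps~\ref{step1}, \ref{step2}, and \ref{step4}, which are purely homological: they combine the Serre vanishing theorem, global generation, Leray's spectral sequence, and the injectivity statements themselves, none of which is sensitive to the coefficient bundle. So the same chains give ``Theorem~\ref{f-thmA} $\Rightarrow$ Theorem~\ref{f-thmB}'' and ``Theorem~\ref{f-thmB} $\Leftrightarrow$ Theorems~\ref{f-thmC} and \ref{f-thmD}'' for $\omega_X\otimes E$; the only points to note are that $\mathcal J(hh_1)=\mathcal J(h)$ still holds in Step~\ref{step1} since $h_1$ is smooth, that $\Ker\alpha\otimes H^{\otimes m}$ in Step~\ref{step2} is globally generated for $m\gg 0$ exactly as before, and that in Step~\ref{step4} the torsion-freeness used is the $\omega\otimes E$-form of Theorem~\ref{f-thmC}. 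Likewise Step~\ref{step6} (Theorems~\ref{f-thmC}, \ref{f-thmE} $\Rightarrow$ Theorem~\ref{f-thmF}) is a formal spectral-sequence argument using only that $R^jf_*(\cdot)=0$ for $j>\dim X-\dim f(X)$ and the GV-property, so it goes through unchanged.

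The substantive steps are \ref{step3} (Theorem~\ref{f-thmB} $\Rightarrow$ Theorem~\ref{f-thmD}) and \ref{step5} (Theorem~\ref{f-thmD} $\Rightarrow$ Theorem~\ref{f-thmE}), since each invokes an induction after passing to a divisor, respectively an \'etale base change. In Step~\ref{step3} one still picks a general $B\in|H^{\otimes m}|$ with $D:=f^{-1}(B)$ smooth, containing no associated primes of $\mathcal O_X/\mathcal J(h)$, and with $\mathcal J(h|_D)=\mathcal J(h)|_D$ by Theorem~\ref{f-thm3.7} and Corollary~\ref{f-cor3.13}; tensoring \eqref{eq332} by the line bundle $\omega_X\otimes F\otimes N\otimes f^*H^{\otimes m}$ and then by $E$, together with the adjunction isomorphism $\omega_X|_D\otimes\mathcal O_X(D)|_D\cong\omega_D$, yields the analogue of \eqref{eq47}
\begin{multline*}
0\to \omega_X\otimes E\otimes F\otimes \mathcal J(h)\otimes N
\to \omega_X\otimes E\otimes F\otimes \mathcal J(h)\otimes N\otimes f^*H^{\otimes m}\\
\to \omega_D\otimes (E|_D)\otimes F|_D\otimes \mathcal J(h|_D)\otimes N|_D\to 0,
\end{multline*}
and by Lemma~\ref{f-lem6.3} the bundle $E|_D$ is Nakano semipositive, so the inductive hypothesis — Theorem~\ref{f-thmD} for $f|_D\colon D\to B$ with $\omega_D$ replaced by $\omega_D\otimes(E|_D)$ — applies and Step~\ref{step3} closes. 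Similarly, in Step~\ref{step5} the fibre product $Z:=B\times_AX$ carries an \'etale morphism $q\colon Z\to X$, and Lemma~\ref{f-lem6.4} gives $q^*E$ Nakano semipositive with $q^*(\omega_X\otimes E)=\omega_Z\otimes q^*E$; flat base change then reads $p^*R^jf_*(\omega_X\otimes E\otimes F\otimes\mathcal J(h))\cong R^jg_*(\omega_Z\otimes q^*E\otimes q^*F\otimes\mathcal J(q^*h))$, and the $\omega\otimes E$-version of Theorem~\ref{f-thmD} supplies the vanishing \eqref{eq417}, so $R^jf_*(\omega_X\otimes E\otimes F\otimes\mathcal J(h))$ is a GV-sheaf.

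I expect the main obstacle to be bookkeeping rather than substance: at each place where a divisor is restricted to, or an \'etale cover is taken, one must verify that the coefficient sheaf is again of the form $\omega_{(\cdot)}\otimes E_{(\cdot)}$ with $E_{(\cdot)}$ Nakano semipositive, so that the induction genuinely runs over the \emph{same} class of statements; Lemmas~\ref{f-lem6.3} and \ref{f-lem6.4}, together with exactness of $\otimes E$, are precisely what make this legitimate, and beyond invoking them there is no new analytic or geometric content past what appears in Section~\ref{f-sec4}.
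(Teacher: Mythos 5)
Your proposal is correct and follows exactly the paper's own route: the paper's proof of this proposition is simply the observation that, by Lemmas \ref{f-lem6.3} and \ref{f-lem6.4}, the argument of Section \ref{f-sec4} goes through verbatim with $\omega_X$ replaced by $\omega_X\otimes E$. You have merely spelled out, step by step, the bookkeeping that the paper leaves implicit.
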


\begin{proof}
By Lemma \ref{f-lem6.3} and Lemma \ref{f-lem6.4}, 
the proof of Proposition \ref{f-prop1.9} 
in Section \ref{x-sec4} works for $K_X\otimes E$. 
\end{proof}

Therefore, by Proposition \ref{f-prop6.5} and 
the proof of Theorem \ref{f-thm1.4} and 
Corollary \ref{f-cor1.7} in Section \ref{x-sec4}, 
it is sufficient to prove the following 
theorem for Theorem \ref{f-thm1.12}. 

\begin{thm}[Theorem \ref{f-thmA} twisted 
by Nakano semipositive vector bundles]\label{f-thm6.6}
Let $E$ be a Nakano semipositive 
vector bundle on a compact K\"ahler manifold $X$. 
Let $F$ $($resp.~$M$$)$ 
be a line bundle on a compact K\"ahler manifold $X$ 
with a singular Hermitian 
metric $h$ $($resp.~a smooth Hermitian metric $h_M$$)$ satisfying 
\begin{align*}
\sqrt{-1}\Theta_{h_M} (M) \geq 0 \text{ and }
\sqrt{-1}\Theta_{h}(F) - b \sqrt{-1}\Theta_{h_M} (M) \geq 0 
\text{ for some $b>0$}. 
\end{align*}
Then for a $($nonzero$)$ section $s \in H^{0}(X, M)$, 
the multiplication map induced by $\otimes s$ 
\begin{equation*}
\times s\colon H^{q}(X, K_X\otimes E \otimes F \otimes \mathcal J(h)) 
\xrightarrow{\quad \otimes s \quad } 
H^{q}(X, K_X\otimes E \otimes F \otimes \mathcal J(h) \otimes M )
\end{equation*}
is injective for every $q$. 
Here $K_X$ is the canonical bundle of $X$ 
and $\mathcal J(h)$ is the multiplier ideal sheaf of $h$. 
\end{thm}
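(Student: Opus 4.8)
The plan is to verify that the proof of Theorem \ref{f-thm5.1} given in Section \ref{f-sec5} carries over essentially word for word once $\omega_X$ is replaced by $\omega_X\otimes E$, the only genuinely new ingredient being the Nakano semipositivity of $E$ at the Bochner--Kodaira--Nakano step. First I would fix once and for all a smooth hermitian metric $h_E$ on $E$ whose curvature $\sqrt{-1}\Theta_{h_E}(E)$ is Nakano semipositive, that is, $\langle \sqrt{-1}\Theta_{h_E}(E)\Lambda_\omega w, w\rangle\geq 0$ for every $E$-valued $(n,q)$-form $w$ and every positive $(1,1)$-form $\omega$, and then reinterpret every $L^2$-space of $F$-valued $(n,q)$-forms occurring in Section \ref{f-sec5} as the corresponding $L^2$-space of $E\otimes F$-valued (resp.~$E\otimes F\otimes M$-valued) $(n,q)$-forms, with the $E$-factor measured by the fixed metric $h_E$. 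Since $E$ then enters only as a fixed smooth auxiliary bundle, the regularization $\{h_\e\}$ of $h$ from \cite[Theorem 2.3]{dps} with properties (a)--(d), the complete K\"ahler forms $\omega_{\e,\delta}=\omega+\delta\omega_\e$ with properties (A)--(C), the harmonic theory and orthogonal decomposition on the noncompact $Y_\e$, and the whole De Rham--Weil and \v{C}ech machinery of Step \ref{f-st1}, Step \ref{f-st2} and Step \ref{f-st4} remain valid verbatim; in particular Lemma \ref{f-lem2.4} is a pointwise statement about $(n,q)$-forms and so is unaffected by tensoring with $(E,h_E)$, whence the basic inequality \eqref{eq5.1} still holds for a representative $u$ of a class in $H^q(X,\omega_X\otimes E\otimes F\otimes\mathcal J(h))$, and Propositions \ref{f-prop5.4}, \ref{f-prop5.6} and \ref{f-prop5.7} go through unchanged.

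Next I would revisit Lemma \ref{f-lem5.12}. For the metric $h_E\otimes H_{\e,\delta}$ on $E\otimes F$ one has
\begin{equation*}
\sqrt{-1}\Theta_{h_E\otimes H_{\e,\delta}}(E\otimes F)
=\sqrt{-1}\Theta_{h_E}(E)\otimes\mathrm{Id}_F+\mathrm{Id}_E\otimes\sqrt{-1}\Theta_{H_{\e,\delta}}(F)
\geq \tfrac{1}{2}\,\omega_{\e,\delta}
\end{equation*}
in the Nakano sense on $(n,q)$-forms, since the first summand is Nakano semipositive and the second satisfies the estimate $\geq\tfrac{1}{2}\omega_{\e,\delta}$ obtained in the original proof of Lemma \ref{f-lem5.12}. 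Hence the standard $L^2$-method for the $\dbar$-equation applies to $E\otimes F$-valued $(n,q)$-forms, Lemma \ref{f-lem5.12} holds with the same constants $C_{\e,\delta}$, and therefore Propositions \ref{f-prop5.9} and \ref{f-prop5.10}, which are purely formal consequences of Lemma \ref{f-lem5.12} together with the Dolbeault--\v{C}ech comparison, remain true for $E\otimes F$.

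The one place where the hypothesis on $E$ is actually used is Step \ref{f-st3}. Applying the Bochner--Kodaira--Nakano identity and the density lemma to the harmonic $E\otimes F$-valued form $u_{\e,\delta}$ and to $su_{\e,\delta}$ yields \eqref{eq5.4} and \eqref{eq5.5} with the curvature operators of $E\otimes F$ and $E\otimes F\otimes M$ in place of those of $F$ and $F\otimes M$. Writing $\sqrt{-1}\Theta(E\otimes F)=\sqrt{-1}\Theta_{h_E}(E)\otimes\mathrm{Id}+\mathrm{Id}\otimes\sqrt{-1}\Theta_{h_\e}(F)$ and discarding the (Nakano semipositive) $E$-part, the integrand $g_{\e,\delta}$ of the first term of \eqref{eq5.4} still satisfies $g_{\e,\delta}\geq -\e q\,|u_{\e,\delta}|^2_{\e,\delta}$ by property (d) and property (B), so that \eqref{eq5.6} persists; and in \eqref{eq5.5} the $F\otimes M$-part of the curvature is estimated exactly as in the original proof, while the $E$-part is dropped after observing that $\lla \sqrt{-1}\Theta_{h_\e}(F)\Lambda_{\omega_{\e,\delta}}su_{\e,\delta},su_{\e,\delta}\rra_{\e,\delta}\leq\int_{Y_\e}|s|^2_{h_M}g_{\e,\delta}\,dV_{\omega_{\e,\delta}}$. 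Thus the chain of inequalities proving Proposition \ref{f-prop5.8} is unchanged, hence so is Proposition \ref{f-prop5.11}, and assembling these exactly as at the end of Section \ref{f-sec5} proves Theorem \ref{f-thm6.6}. I expect this last piece of bookkeeping in Step \ref{f-st3} to be the main obstacle: one must check that the extra curvature term contributed by $E$ always appears with the sign that allows it to be discarded---improving the lower bound \eqref{eq5.6} in the identity \eqref{eq5.4} coming from harmonicity, and permitting in \eqref{eq5.5} the passage from the $E\otimes F$-curvature pairing to the $F$-curvature pairing in the direction needed.
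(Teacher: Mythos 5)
Your proposal is correct and follows essentially the same route as the paper: replace $(F,h_\e)$ by $(E\otimes F,h_Eh_\e)$, observe that the regularization, the complete metrics $\omega_{\e,\delta}$, the De Rham--Weil/\v{C}ech machinery and Lemma \ref{f-lem5.12} extend verbatim because Demailly's $L^2$-estimates are stated for vector bundles, and use Nakano semipositivity only in Step \ref{f-st3}, where the $E$-curvature term has the favourable sign in \eqref{eq5.4} and is absorbed into the new integrand $g_{\e,\delta}$ via $\langle\sqrt{-1}\Theta_{h_\e}(F)\Lambda u,u\rangle\leq\langle\sqrt{-1}\Theta_{h_Eh_\e}(E\otimes F)\Lambda u,u\rangle$ in \eqref{eq5.5}. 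The only point the paper makes explicit that you pass over is the verification that $C^{q}(\mathcal{U},\omega_X\otimes E\otimes F\otimes\mathcal J(h_\e))$ is still a Fr\'echet space, which is needed for the closedness of $\Image\dbar$ and hence the orthogonal decomposition; this is routine.
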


We will explain how to modify the proof of Theorem \ref{f-thm5.1} 
for Theorem \ref{f-thm6.6}. 
 
\begin{proof} 
We replace $(F, h_{\e})$ with 
$(E \otimes F, h_{E} h_{\e} )$ in the proof of Theorem \ref{f-thm5.1}, 
where $\{h_{\e} \}_{1\gg \e>0}$ is a family of 
singular Hermitian metrics on $F$ (constructed in Step \ref{f-st1}) 
and $h_{E}$ is a smooth Hermitian metric on $E$ such that 
$\sqrt{-1}\Theta_{h_{E}}(E)$ is Nakano semipositive. 
Then it is easy to see that 
essentially the same proof as in Theorem \ref{f-thm5.1} 
works for Theorem \ref{f-thm6.6} 
thanks to the assumption on the curvature of $E$. 
For the reader's convenience, 
we give several remarks on 
the differences with the proof of Theorem \ref{f-thm5.1}. 

There is no problem 
when we construct $h_{\e}$ and $\omega_{\e,\delta}$. 
In Step \ref{f-st4} in the proof 
of Theorem \ref{f-thm5.1}, we used the de Rham--Weil isomorphism 
(see (\ref{eq5.7}) and \cite[Proposition 5.5]{matsumura4}), 
which was constructed by using Lemma \ref{f-lem5.12}. 
Since \cite[4.1 Th\'eor\`eme]{demailly-dbar} (which yields Lemma \ref{f-lem5.12}) 
is formulated for holomorphic vector bundles, 
Lemma \ref{f-lem5.12} can be generalized to $(E \otimes F, h_{E} h_{\e} )$.
From this generalization, 
we can construct the de Rham--Weil isomorphism for $E \otimes F$ 
\begin{align*}
\overline{f_{\e,\delta}} \colon \dfrac{\Ker \dbar}{\Image \dbar} 
\text{ of } L^{n,q}_{(2)}(E \otimes F)_{h_{E} h_{\e}, \omega_{\e,\delta}}  
\xrightarrow{\quad \cong \quad }
\dfrac{\Ker \mu}{\Image \mu} 
\text{ of } C^{q}(\mathcal{U}, K_X\otimes E \otimes F \otimes \I{h_{\e}}).   
\end{align*}

In Step \ref{f-st1}, we used the orthogonal 
decomposition of $L^{n,q}_{(2)}(F)_{\e, \delta}$, 
which was obtained from the fact 
that $\Image\dbar \subset L^{n,q}_{(2)}(F)_{\e, \delta}$ is closed. 
To obtain the same conclusion 
for $L^{n,q}_{(2)}(E \otimes F)_{h_{E} h_{\e}, \omega_{\e,\delta}}$, 
it is sufficient to show that 
$C^{q}(\mathcal{U}, K_{X}\otimes E \otimes F \otimes \I{h_{\e}})$ 
is a Fr\'echet space (see \cite[Proposition 5.8]{matsumura4}). 
We can easily check it by 
using the same argument as in \cite[Theorem 5.3]{matsumura4} 
for $\mathbb{C}^{{\rm{rank}} E}$-valued holomorphic functions. 

The argument of Step \ref{f-st2} 
works even if we consider  $(E \otimes F, h_{E} h_{\e} )$. 
In Step \ref{f-st3}, we need to prove \eqref{eq5.6}, 
but it is easy to see 
\begin{align*}
-\e q |u_{\e,\delta}|^{2}_{h_{E} h_{\e}, \omega_{\e,\delta}}
&\leq
\langle \sqrt{-1}\Theta_{h_{\e}}(F)
\Lambda_{\omega_{\e, \delta}} u_{\e,\delta}, u_{\e,\delta}
\rangle_{h_{E} h_{\e}, \omega_{\e,\delta}} \\
&\leq 
\langle \sqrt{-1}\Theta_{h_{E} h_{\e} }(E \otimes F)
\Lambda_{\omega_{\e, \delta}} u_{\e,\delta}, u_{\e,\delta}
\rangle_{h_{E} h_{\e}, \omega_{\e,\delta}}  
\end{align*}
since $\sqrt{-1}\Theta_{h_{E}}(E)$ is Nakano semipositive.  
\end{proof}

When $E$ is Nakano semipositive and is not flat, 
there seems to be no Hodge theoretic approach 
to Theorem \ref{f-thm6.6} even if $h$ is smooth. 

%%%%%%%%%%%%%%%%%%%%%%%%%%%


\begin{thebibliography}{MaS5}

\bibitem[AK]{ak} 
A.~Altman, S.~Kleiman, 
{\textit{Introduction to Grothendieck duality theory}},  
Lecture Notes in Mathematics, {\textbf{146}}, Springer-Verlag, 
Berlin-New York 1970. 

\bibitem[A1]{Amb03}
F.~Ambro,   
Quasi-log varieties, 
Tr. Mat. Inst. Steklova {\bf{240}} (2003), 
Biratsion. Geom. Line\u{\i}n. Sist. Konechno 
Porozhdennye Algebry, 220--239; reprinted 
in Proc. Steklov Inst. Math. 2003, no. 1(240), 214--233. 

\bibitem[A2]{Amb14}
F.~Ambro,    
An injectivity theorem, 
Compos. Math. {\bf{150}} (2014), no. 6, 999--1023. 

\bibitem[Ar]{arapura} 
D.~Arapura, 
A note on Koll\'ar's theorem, 
Duke Math. J. {\textbf{53}} (1986), no. 4, 1125--1130. 

\bibitem[Be]{Be} 
B.~Berndtsson, 
Prekopa's theorem and Kiselman's minimum 
principle for plurisubharmonic functions, 
Math. Ann. {\textbf{312}} (1998), no. 4, 785--792.

\bibitem[CDM]{CDM} 
J.~Cao, J.-P.~Demailly, S.~Matsumura, 
A general extension theorem for cohomology classes on non reduced analytic 
subspaces, 
Sci. China Math. \textbf{60} (2017), no. 6, 949--962, 
Special Issue on Several Complex Variables. 

\bibitem[D1]{demailly-dbar} 
J.-P.~Demailly, 
Estimations $L^{2}$ pour l'op\'erateur $\overline{\partial}$ d'un fibr\'e 
vectoriel holomorphe 
semi-positif au-dessus d'une vari\'et\'e k\"ahl\'erienne compl\`ete, 
Ann. Sci. \'Ecole Norm. Sup(4). {\bf{15}} (1982), 457--511. 

\bibitem[D2]{demailly-numerical} 
J.-P.~Demailly, 
A numerical criterion for very ample line bundles, 
J. Differential Geom. {\textbf{37}} (1993), no. 2, 323--374.

\bibitem[D3]{demailly} 
J.-P.~Demailly, 
{\textit{Analytic methods in algebraic geometry}}, 
Surveys of Modern Mathematics, {\textbf{1}}. International 
Press, Somerville, MA; Higher Education Press, Beijing, 2012.

\bibitem[D4]{demailly-note} 
J.-P.~Demailly, 
{\textit{Complex analytic and differential geometry}}, 
available at the web page of the author. 
https://www-fourier.ujf-grenoble.fr/$\sim$demailly/manuscripts/agbook.pdf

\bibitem[DHP]{dhp} 
J.-P.~Demailly, C.~D.~Hacon, M.~P\u aun, 
Extension theorems, non-vanishing and the 
existence of good minimal models, 
Acta Math. {\textbf{210}} (2013), no. 2, 203--259. 

\bibitem[DPS]{dps} 
J.-P.~Demailly, T.~Peternell, M.~Schneider, 
Pseudo-effective line bundles on compact K\"ahler manifolds, 
International Journal of Math. {\bf{6}} (2001), 689--741.

\bibitem[En]{enoki}
I.~Enoki, Kawamata--Viehweg vanishing theorem for compact 
K\"ahler manifolds, 
{\textit{Einstein metrics and Yang--Mills connections (Sanda, 1990)}}, 
59--68, Lecture Notes in Pure and Appl. Math., {\textbf{145}}, 
Dekker, New York, 1993.

\bibitem[EV]{esnault-viehweg} 
H.~Esnault, E.~Viehweg, 
{\textit{Lectures on vanishing theorems}}, 
DMV Seminar, {\textbf{20}}. Birkh\"auser Verlag, Basel, 1992.

\bibitem[F1]{fujino-pja}
O.~Fujino, On injectivity, vanishing 
and torsion-free theorems for algebraic varieties, 
Proc. Japan Acad. Ser. A Math. Sci. {\textbf{85}} 
(2009), no. 8, 95--100.

\bibitem[F2]{fujino-quasi}
O.~Fujino, 
Introduction to the theory of quasi-log varieties, 
{\textit{Classification of algebraic varieties}}, 289--303, 
EMS Ser. Congr. Rep., Eur. Math. Soc., Z\"urich, 2011.

\bibitem[F3]{fujino-funda} 
O.~Fujino, 
Fundamental theorems for the log minimal model program, 
Publ. Res. Inst. Math. Sci. {\textbf{47}} (2011), no. 3, 727--789. 
 
\bibitem[F4]{fujino-osaka}
O.~Fujino, A transcendental approach to Koll\'ar's 
injectivity theorem, 
Osaka J. Math. {\textbf{49}} (2012), no. 3, 833--852.

\bibitem[F5]{fujino-crelle} 
O.~Fujino, A transcendental approach to 
Koll\'ar's injectivity theorem II, 
J. Reine Angew. Math. {\textbf{681}} (2013), 149--174.

\bibitem[F6]{fujino-vanishing} 
O.~Fujino, Vanishing theorems, 
{\textit{Minimal models and extremal rays (Kyoto, 2011)}}, 299--321, Adv. Stud. Pure 
Math., \textbf{70}, Math. Soc. Japan, [Tokyo], 2016.

\bibitem[F7]{fujino-injectivity}
O.~Fujino, Injectivity theorems, 
{\textit{Higher dimensional algebraic geometry---in honour of Professor 
Yujiro Kawamata's sixtieth birthday}}, 131--157, Adv. Stud. Pure 
Math., \textbf{74}, Math. Soc. Japan, Tokyo, 2017. 

\bibitem[F8]{fujino-slc} 
O.~Fujino, 
Fundamental theorems for semi log canonical pairs, 
Algebr. Geom. {\textbf{1}} (2014), no. 2, 194--228.

\bibitem[F9]{fujino-foundation}
O.~Fujino, {\textit{Foundations of the minimal model program}}, 
MSJ Memoirs, \textbf{35}. Mathematical Society of Japan, Tokyo, 2017. 

\bibitem[F10]{fujino-kodaira} 
O.~Fujino, Kodaira vanishing theorem for log-canonical 
and semi-log-canonical pairs, 
Proc. Japan Acad. Ser. A Math. Sci. {\textbf{91}} (2015), no. 8, 
112--117.

\bibitem[F11]{fujino-zucker} 
O.~Fujino, On semipositivity, injectivity, and vanishing theorems, 
{\textit{Hodge theory and $L^2$-analysis}}, 245--282, 
Adv. Lect. Math. (ALM), \textbf{39}, Int. Press, Somerville, MA, 2017.

\bibitem[F12]{fujino-vani-semi} 
O.~Fujino, Vanishing and semipositivity theorems for 
semi-log canonical pairs, 
Publ. Res. Inst. Math. Sci. \textbf{56} (2020), no. 1, 15--32. 

\bibitem[F13]{fujino-slc-surface} 
O.~Fujino, Effective basepoint-free theorem 
for semi-log canonical surfaces, 
Publ. Res. Inst. Math. Sci. \textbf{53} (2017), no. 3, 349--370.

\bibitem[F14]{fujino-kollar-type}
O.~Fujino, Koll\'ar-type effective freeness for quasi-log canonical pairs, 
Internat. J. Math. \textbf{27} (2016), no. 14, 1650114, 15 pp. 

\bibitem[F15]{fujino-amc}
O.~Fujino, Koll\'ar--Nadel type vanishing theorem, 
Southeast Asian Bull. Math. \textbf{42} (2018), no. 5, 643--646.

\bibitem[F16]{fujino-relative}
O.~Fujino, Relative Bertini type theorem for multiplier ideal sheaves, 
preprint (2017). arXiv:1709.01406 [math.AG]

\bibitem[Fs]{fujisawa} 
T.~Fujisawa, A remark on the decomposition theorem for direct images of 
canonical sheaves tensorized with semipositive vector bundles, 
Proc. Japan Acad. Ser. A Math. Sci. \textbf{92} (2016), no. 7, 84--85. 

\bibitem[GM]{gongyo-matsumura} 
Y.~Gongyo, S.~Matsumura, Versions of injectivity and extension theorems, 
Ann. Sci. \'Ec. Norm. Sup\'er. (4) \textbf{50} (2017), no. 2, 479--502.

\bibitem[GL]{green-lazarsfeld} 
M.~Green, R.~Lazarsfeld, Deformation theory, generic 
vanishing theorems, and some conjectures of 
Enriques, Catanese and Beauville, 
Invent. Math. {\textbf{90}} (1987), no. 2, 389--407. 

\bibitem[Ha]{hacon} 
C.~D.~Hacon, 
A derived category approach to generic vanishing, 
J. Reine Angew. Math. {\textbf{575}} (2004), 173--187.

\bibitem[H\"o]{horing}
A.~H\"oring, 
Positivity of direct image sheaves---a geometric point of view, 
Enseign. Math. (2) {\textbf{56}} (2010), no. 1-2, 87--142.

\bibitem[Kod]{kodaira} 
K.~Kodaira, 
On a differential-geometric method in the theory of analytic stacks, 
Proc. Nat. Acad. Sci. U. S. A. {\textbf{39}}, (1953). 1268--1273. 

\bibitem[Kol1]{kollar-higher1}
J.~Koll\'ar, Higher direct images of dualizing sheaves. I, 
Ann. of Math. (2) {\textbf{123}} (1986), no. 1, 11--42. 

\bibitem[Kol2]{kollar-shafarevich} 
J.~Koll\'ar, {\textit{Shafarevich maps and automorphic forms}}, 
M.~B.~Porter Lectures, 
Princeton University Press, Princeton, NJ, 1995.

\bibitem[L1]{lazarsfeld-book1} 
R.~Lazarsfeld, 
{\textit{Positivity in algebraic geometry. I. Classical setting: line 
bundles and linear series}}, 
Results in Mathematics and Related Areas. 3rd 
Series. A Series of Modern Surveys in Mathematics, 
{\textbf{48}}. Springer-Verlag, Berlin, 2004. 

\bibitem[L2]{lazarsfeld-book} 
R.~Lazarsfeld, 
{\textit{Positivity in algebraic geometry. II. Positivity for vector 
bundles, and multiplier ideals}}, Results 
in Mathematics and Related Areas. 3rd Series. A Series of 
Modern Surveys in Mathematics, {\textbf{49}}. Springer-Verlag, 
Berlin, 2004. 

\bibitem[L3]{lazarsfeld-survey} 
R.~Lazarsfeld, 
A short course on multiplier ideals, 
{\textit{Analytic and algebraic geometry}}, 451--494, IAS/Park 
City Math. Ser., {\textbf{17}}, Amer. Math. Soc., Providence, RI, 2010.

\bibitem[LP]{lazic-peternell} 
V.~Lazi\'c, T.~Peternell, 
Abundance for varieties with many differential forms, 
\'Epijournal G\'eom. Alg\'ebrique {\textbf{2}} (2018), Art. 1, 35 pp.

\bibitem[LRW]{liu-rao-wan} 
K.~Liu, S.~Rao, X.~Wan, 
Geometry of logarithmic forms and deformations of complex structures, 
J. Algebraic Geom. {\textbf{28}} (2019), no. 4, 773--815.

\bibitem[Man]{manaresi} 
M.~Manaresi, 
Sard and Bertini type theorems for complex spaces. 
Ann. Mat. Pura Appl. (4) \textbf{131} (1982), 265--279. 

\bibitem[MaH]{matsumura-book} 
H.~Matsumura, {\textit{Commutative ring theory}}, Translated 
from the Japanese by M.~Reid. Second edition. Cambridge 
Studies in Advanced Mathematics, {\textbf{8}}. Cambridge 
University Press, Cambridge, 1989.

\bibitem[MaS1]{matsumura1} 
S.~Matsumura, A Nadel vanishing theorem via injectivity theorems, 
Math. Ann. {\textbf{359}} (2014), no. 3-4, 785--802.

\bibitem[MaS2]{matsumura2} 
S.~Matsumura, A Nadel vanishing theorem for metrics 
with minimal singularities on big line bundles, 
Adv. Math. {\textbf{280}} (2015), 188--207. 

\bibitem[MaS3]{matsumura3} 
S.~Matsumura, Some applications of the 
theory of harmonic integrals, 
Complex Manifolds {\textbf{2}} (2015), 16--25.

\bibitem[MaS4]{matsumura4} 
S.~Matsumura, 
An injectivity theorem with multiplier ideal sheaves of singular 
metrics with transcendental singularities, 
 J. Algebraic Geom. {\textbf{27}} (2018), no. 2, 305--337.

\bibitem[MaS5]{matsumura5} 
S.~Matsumura, An injectivity theorem 
with multiplier ideal sheaves for 
higher direct images under K\"ahler morphisms, 
preprint (2015). arXiv:1607.05554v2. 

\bibitem[MaS6]{matsumura6} 
S.~Matsumura, A vanishing theorem of Koll\'ar--Ohsawa type, 
Math. Ann. {\textbf{366}} (2016), no. 3-4, 1451--1465

\bibitem[MaS7]{matsumura7} 
S.~Matsumura, 
Injectivity theorems with multiplier ideal sheaves and 
their applications, 
{\textit{Complex analysis and geometry}}, 241--255, 
Springer Proc. Math. Stat., {\textbf{144}}, Springer, Tokyo, 2015.

\bibitem[MaS8]{matsumura8} 
S.~Matsumura, 
A transcendental approach to injectivity theorem for log canonical pairs, 
Ann. Sc. Norm. Super. Pisa Cl. Sci. (5) {\textbf{19}} (2019), no. 1, 311--334.

\bibitem[Mo]{mour} 
C.~Mourougane, Th\'eor\`emes d'annulation g\'en\'erique 
pour les fibr\'es vectoriels semi-n\'egatifs, 
Bull. Soc. Math. France {\textbf{127}} (1999), no. 1, 115--133. 

\bibitem[No]{noguchi} 
J.~Noguchi, 
A short analytic proof of closedness of logarithmic forms, 
Kodai Math. J. {\textbf{127}} (1995), no. 2, 295--299. 

\bibitem[O1]{ohsawa-norm} 
T.~Ohsawa, 
On complete K\"ahler domains with $C^{1}$-boundary, 
Publ. Res. Inst. Math. Sci. {\textbf{16}} (1980), no. 3, 929--940.

\bibitem[O2]{ohsawa-vanishing} 
T.~Ohsawa, 
Vanishing theorems on complete K\"ahler manifolds, 
Publ. Res. Inst. Math. Sci. {\textbf{20}} (1984), no. 1, 21--38.

\bibitem[O3]{ohsawa} 
T.~Ohsawa, On a curvature condition that implies a 
cohomology injectivity theorem of Koll\'ar--Skoda type, 
Publ. Res. Inst. Math. Sci. {\textbf{41}} (2005), no. 3, 565--577.

\bibitem[OT]{ohsawa-takegoshi} 
T.~Ohsawa, K.~Takegoshi, 
On the extension of $L^2$ holomorphic functions, 
Math. Z. {\textbf{195}} (1987), no. 2, 197--204.

\bibitem[PP]{pp} 
G.~Pareschi, M.~Popa, 
GV-sheaves, Fourier--Mukai transform, and generic vanishing, 
Amer. J. Math. {\textbf{133}} (2011), no. 1, 235--271.

\bibitem[Pa]{paun} 
M.~P\u aun, 
Siu's invariance of plurigenera: a one-tower proof, 
J. Differential Geom. {\textbf{76}} (2007), no. 3, 485--493.

\bibitem[Ra]{ransford} 
T.~Ransford, {\textit{Potential theory in the complex plane}}, 
London Mathematical Society Student Texts, {\textbf{28}}, 
Cambridge University Press, Cambridge, 1995.

\bibitem[Sc]{schnell} 
C.~Schnell, A graduate course on the generic vanishing theorem 
(Stony Brook and Bonn, 2013), available at the web page of the author. 
http://www.math.stonybrook.edu/$\sim$cschnell

\bibitem[Si]{siu} 
Y.-T.~Siu, 
Extension of twisted pluricanonical sections 
with plurisubharmonic weight and invariance of 
semipositively twisted plurigenera for manifolds 
not necessarily of general type, 
{\textit{Complex geometry (G\"ottingen, 2000)}}, 223--277, 
Springer, Berlin, 2002. 

\bibitem[Ta]{takegoshi} 
K.~Takegoshi, Higher direct images of canonical sheaves 
tensorized with semi-positive vector bundles by proper 
K\"ahler morphisms, 
Math. Ann. {\textbf{303}} (1995), no. 3, 389--416.

\bibitem[Wu]{wu} 
L.~Wu, 
Vanishing and injectivity theorems for Hodge modules, 
Trans. Amer. Math. Soc. \textbf{369} (2017), no. 11, 7719--7736.

\bibitem[ZZ]{ZZ} 
X.~Zhou, L.~Zhu, 
Regularization of quasi-plurisubharmonic functions on complex manifolds, 
Sci. China Math. {\bf{61}} (2018), no. 7, 1163--1174.

\end{thebibliography}
\end{document}